\author{Samuele Giraudo}
\date{\today}
\address{Laboratoire d'Informatique Gaspard-Monge,
université Paris-Est Marne-la-Vallée, 5 Boulevard Descartes,
Champs-sur-Marne, 77454, Marne-la-Vallée cedex 2, France}
\email{samuele.giraudo@univ-mlv.fr}
\title{Combinatorial operads from monoids}
\keywords{Operad; Tree; Monoid; Rewriting.}
\renewcommand{\arraystretch}{1.1}
\newtheorem{Theoreme}{Theorem}[section]
\newtheorem{Proposition}[Theoreme]{Proposition}
\newtheorem{Lemme}[Theoreme]{Lemma}
\numberwithin{equation}{subsection}
\renewcommand{\leq}{\leqslant}
\renewcommand{\geq}{\geqslant}
\newcommand{\EnsPermu}{\mathfrak{S}}
\newcommand{\EnsNat}{\mathbb{N}}
\newcommand{\EnsNatM}{\mathbb{M}}
\newcommand{\EnsRel}{\mathbb{Z}}
\newcommand{\K}{\mathbb{K}}
\newcommand{\CalB}{\mathcal{B}}
\newcommand{\CalC}{\mathcal{C}}
\newcommand{\CalF}{\mathcal{F}}
\newcommand{\CalP}{\mathcal{P}}
\newcommand{\CalQ}{\mathcal{Q}}
\newcommand{\CalT}{\mathcal{T}}
\newcommand{\CalS}{\mathcal{S}}
\newcommand{\T}{{\sf T}}
\newcommand{\TT}{{\it T}}
\newcommand{\Unite}{{\bf 1}}
\newcommand{\Permu}{{\sf As}}
\newcommand{\End}{{\sf End}}
\newcommand{\FP}{{\sf PF}}
\newcommand{\MT}{{\sf PW}}
\newcommand{\Per}{{\sf Per}}
\newcommand{\APE}{{\sf PRT}}
\newcommand{\FCat}[1]{{\sf FCat^{(#1)}}}
\newcommand{\Schr}{{\sf Schr}}
\newcommand{\Motz}{{\sf Motz}}
\newcommand{\Comp}{{\sf Comp}}
\newcommand{\AnD}{{\sf DA}}
\newcommand{\SComp}{{\sf SComp}}
\newcommand{\DD}{{\sf Di}}
\newcommand{\Dias}{{\sf Dias}}
\newcommand{\Tr}{{\sf Tr}}
\newcommand{\Trias}{{\sf Trias}}
\newcommand{\Dendr}{{\sf Dendr}}
\newcommand{\TDendr}{{\sf TDendr}}
\newcommand{\Quad}{{\sf Quad}}
\newcommand{\Ennea}{{\sf Ennea}}
\newcommand{\NAP}{{\sf NAP}}
\newcommand{\Com}{{\sf Com}}
\newcommand{\La}{{\tt a}}
\newcommand{\Lb}{{\tt b}}
\newcommand{\Lc}{{\tt c}}
\newcommand{\Ld}{{\tt d}}
\newcommand{\Alphab}{\operatorname{Alph}}
\newcommand{\Deg}{\operatorname{d}}
\newcommand{\ArbCons}{\bigwedge}
\newcommand{\Eval}{\operatorname{ev}}
\newcommand{\Etiq}{\operatorname{lbl}}
\newcommand{\Poids}{\operatorname{w}}
\newcommand{\NbNoeuds}{\operatorname{n}}
\newcommand{\NbFeuilles}{\ell}
\newcommand{\Feuille}{\,\scalebox{.25}{
\begin{tikzpicture}
    \node[Feuille](0)at(0,0){};
    \node(1)at(0,1){};
    \draw[Arete](1)--(0);
\end{tikzpicture}}\,}
\newcommand{\SchrOpA}{\,\scalebox{.18}{
\begin{tikzpicture}
    \node[NoeudSr](Schr1)at(0,0){};
    \node[Feuille](Schr2)at(-1,-1){};
    \node[Feuille](Schr3)at(0,-1){};
    \node[Feuille](Schr4)at(1,-1){};
    \draw[Arete](Schr1)--(Schr2);
    \draw[Arete](Schr1)--(Schr3);
    \draw[Arete](Schr1)--(Schr4);
\end{tikzpicture}}\,}
\newcommand{\SchrOpB}{\,\scalebox{.18}{
\begin{tikzpicture}
    \node[NoeudSr](Schr1)at(0,0){};
    \node[Feuille](Schr2)at(-1,-1){};
    \node[NoeudSr](Schr3)at(.5,-.5){};
    \node[Feuille](Schr4)at(0,-1){};
    \node[Feuille](Schr5)at(1,-1){};
    \draw[Arete](Schr1)--(Schr2);
    \draw[Arete](Schr1)--(Schr3);
    \draw[Arete](Schr3)--(Schr4);
    \draw[Arete](Schr3)--(Schr5);
\end{tikzpicture}}\,}
\newcommand{\SchrOpC}{\,\scalebox{.18}{
\begin{tikzpicture}
    \node[NoeudSr](Schr1)at(0,0){};
    \node[NoeudSr](Schr2)at(-.5,-.5){};
    \node[Feuille](Schr3)at(-1,-1){};
    \node[Feuille](Schr4)at(0,-1){};
    \node[Feuille](Schr5)at(1,-1){};
    \draw[Arete](Schr1)--(Schr2);
    \draw[Arete](Schr2)--(Schr3);
    \draw[Arete](Schr2)--(Schr4);
    \draw[Arete](Schr1)--(Schr5);
\end{tikzpicture}}\,}
\newcommand{\MotzOpA}{\,\scalebox{.18}{
\begin{tikzpicture}
    \node[NoeudDyck](Motz1)at(0,0){};
    \node[NoeudDyck](Motz2)at(1,0){};
    \draw[PasDyck](Motz1)--(Motz2);
\end{tikzpicture}}\,}
\newcommand{\MotzOpB}{\,\scalebox{.18}{
\begin{tikzpicture}
    \node[NoeudDyck](Motz1)at(0,0){};
    \node[NoeudDyck](Motz2)at(1,1){};
    \node[NoeudDyck](Motz3)at(2,0){};
    \draw[PasDyck](Motz1)--(Motz2);
    \draw[PasDyck](Motz2)--(Motz3);
\end{tikzpicture}}\,}
\newcommand{\AnDOpA}{\,\scalebox{.18}{
\begin{tikzpicture}
    \node[NoeudDyck](AnD1)at(0,0){};
    \node[NoeudDyck](AnD2)at(1,0){};
    \draw[PasDyck](AnD1)--(AnD2);
\end{tikzpicture}}\,}
\newcommand{\AnDOpB}{\,\scalebox{.18}{
\begin{tikzpicture}
    \node[NoeudDyck](AnD1)at(0,0){};
    \node[NoeudDyck](AnD2)at(0,1){};
    \draw[PasDyck](AnD1)--(AnD2);
\end{tikzpicture}}\,}
\newcommand{\CompOpA}{\,\scalebox{.15}{
\begin{tikzpicture}
    \node[Boite]at(0,0){};
    \node[Boite]at(1,0){};
\end{tikzpicture}}\,}
\newcommand{\CompOpB}{\,\scalebox{.15}{
\begin{tikzpicture}
    \node[Boite,Marque2]at(0,0){};
    \node[Boite,Marque2]at(0,-1){};
\end{tikzpicture}}\,}
\newcommand{\SCompOpA}{\,\scalebox{.15}{
\begin{tikzpicture}
    \node[Boite,Marque3]at(0,0){};
    \node[Boite,Marque3]at(1.75,0){};
\end{tikzpicture}}\,}
\newcommand{\SCompOpB}{\CompOpA}
\newcommand{\SCompOpC}{\CompOpB}
\newcommand{\Sloane}[1]{\href{http://oeis.org/#1}{{\bf #1}}}
\definecolor{Noir}{RGB}{0,0,0}
\definecolor{Blanc}{RGB}{255,255,255}
\definecolor{Rouge}{RGB}{205,35,38}
\definecolor{Bleu}{RGB}{2,60,195}
\definecolor{Vert}{RGB}{23,163,1}
\definecolor{Violet}{RGB}{181,18,225}
\definecolor{Orange}{RGB}{255,113,15}
\tikzstyle{Noeud} = [circle,draw=Bleu!80,fill=Bleu!20,thick,inner sep=0pt,
\tikzstyle{NoeudSr} = [Noeud,minimum size=4mm]
\tikzstyle{Arete} = [Rouge!80,thick,draw,line width=4pt]
\tikzstyle{Feuille} = [rectangle,draw=Noir!70,fill=Noir!20,thick,
\tikzstyle{Marque1} = [draw=Vert!100,fill=Vert!30]
\tikzstyle{Marque2} = [draw=Orange!100,fill=Orange!40]
\tikzstyle{Marque3} = [draw=Rouge!100,fill=Rouge!50]
\tikzstyle{EtiqClair} = [fill=Blanc!100]
\tikzstyle{NoeudDyck} = [circle,draw=Violet!90,fill=Bleu!60,thick,
\tikzstyle{PasDyck} = [color=Violet!60,line width=3pt]
\tikzstyle{Grille} = [color=Noir!30]
\tikzstyle{Boite} = [rectangle,rounded corners,draw=Bleu!100,fill=Bleu!20,
\tikzstyle{Injection} = [Noir!100,draw,>->]
\tikzstyle{Surjection} = [Noir!100,draw,->>]
\tikzstyle{SommetAnimal} = [circle,draw=Violet!90,fill=Bleu!60,thick,
\begin{document}

\begin{abstract}
    We introduce a functorial construction which, from a monoid, produces
    a set-operad. We obtain new (symmetric or not) operads as suboperads
    or quotients of the operads obtained from usual monoids such as the
    additive and multiplicative monoids of integers and cyclic monoids.
    They involve various familiar combinatorial objects: endofunctions,
    parking functions, packed words, permutations, planar rooted trees,
    trees with a fixed arity, Schröder trees, Motzkin words, integer
    compositions, directed animals, and segmented integer compositions.
    We also recover some already known (symmetric or not) operads: the
    magmatic operad, the associative commutative operad, the diassociative
    operad, and the triassociative operad. We provide presentations by
    generators and relations of all constructed nonsymmetric operads.
\end{abstract}

\maketitle

\tableofcontents

\section*{Introduction} \label{sec:Intro}

Operads are algebraic structures introduced in the 1970s by Boardman and
Vogt~\cite{BV73} and by May~\cite{May72} in the context of algebraic topology.
Informally, an operad is a structure containing operators with~$n$ inputs
and~$1$ output, for all positive integer~$n$. Two operators~$x$ and~$y$
can be composed at the $i$th position by grafting the output of~$y$ on the $i$th
input of~$x$. The new operator thus obtained is denoted by~$x \circ_i y$.
In an operad, one can also switch the inputs of an operator~$x$
by letting a permutation~$\sigma$ act to obtain a new operator denoted
by~$x \cdot \sigma$. One of the main relishes of operads comes from the
fact that they offer a general theory to study in an unifying way different
types of algebras, such as associative algebras and Lie algebras.
\smallskip

In recent years, the importance of operads in combinatorics has increased
and several new (nonsymmetric) operads were defined on combinatorial objects
(see {\em e.g.},~\cite{Lod01,CL01,Liv06,Cha08}). The structure thereby added
on combinatorial families enables us to see these in a new light and offers
original ways to solve some combinatorial problems. For example, the dendriform
operad~\cite{Lod01} is a nonsymmetric operad on binary trees that plays
an interesting role in describing the Hopf algebra of Loday-Ronco of binary
trees~\cite{LR98,HNT05}. Besides, this nonsymmetric operad is a key ingredient
for the enumeration of intervals of the Tamari lattice~\cite{Cha06,Cha08}. There
is also a very rich link connecting combinatorial Hopf algebra theory and
operad theory: various constructions produce combinatorial Hopf algebras
from operads~\cite{Vdl04,CL07,Fra08,LV12,ML13}.
\smallskip

In this paper, we propose a new generic method to build combinatorial operads.
The starting point is to pick a monoid~$M$. We then consider the set of
words whose letters are elements of~$M$. The arity of such words are their
length, the composition of two words is expressed from the product of~$M$,
and permutations act on words by permuting letters. In this way, we associate
an operad denoted by~$\T M$ with any monoid~$M$. This construction is rich
from a combinatorial point of view since it allows us, by considering
suboperads and quotients of~$\T M$, to get new (symmetric or not) operads
on various combinatorial objects. Our construction is related to two
previous ones.
\smallskip

The first one is a construction of Méndez and Nava~\cite{MN93} emerging from
the context of the species theory~\cite{Joy81}. Roughly speaking, a species
is a combinatorial construction~$U$ which takes an underlying set~$E$ as
input and produces a set~$U[E]$ of objects by adding some structure on the
elements of~$E$ (see~\cite{BLL94}). This theory has many links with the
theory of operads since an operad is a monoid with respect to the operation
of substitution of species. In~\cite{MN93}, the authors defined the plethystic
species, that are species taking as input sets where any element has a colour
picked from a fixed monoid~$M$. This monoid has to satisfy some precise
conditions (as to be left cancellable and without proper divisor of the unity,
and such that any element has finitely many factorizations). It appears that
the elements of the so-called uniform plethystic species can be seen as
words of colours and hence, as elements of~$\T M$. Moreover, the composition
of this operad is the one of~$\T M$. The main difference between the
construction of Méndez and Nava and ours lies in the fact that the
construction~$\T$ can be applied on any monoid.
\smallskip

The second one, introduced by Berger and Moerdijk~\cite{BM03}, is a
construction which allows to obtain, from a commutative bialgebra~$\CalB$,
a cooperad~$\TT \CalB$. Our construction~$\T$ and the construction~$\TT$
of these two authors are different but coincide in many cases.
For instance, when~$(M, \bullet)$ is a monoid such that for any~$x \in M$,
the set of pairs~$(y, z) \in M^2$ satisfying~$y \bullet z = x$ is finite,
the operad~$\T M$ is the dual of the cooperad $\TT \CalB$ where~$\CalB$
is the dual bialgebra of~$\K [M]$ endowed with the diagonal coproduct ($\K$
is a field). On the other hand, there are operads that we can build by
the construction~$\T$ but not by the construction~$\TT$, and conversely.
For example, the operad~$\T \EnsRel$, where~$\EnsRel$ is the additive monoid
of integers, cannot be obtained as the dual of a cooperad built by the
construction~$\TT$ of Berger and Moerdijk.
\smallskip

Furthermore, our construction is defined in the category of sets and
computations are explicit. It is therefore possible given a monoid~$M$,
to make experiments on the operad~$\T M$, using if necessary a computer.
In this paper, we study many applications of the construction~$\T$
focusing on its combinatorial aspect. More precisely, we define, by
starting from very simple monoids like the additive or multiplicative
monoids of integers, or cyclic monoids, various nonsymmetric operads
involving well-known combinatorial objects.
\smallskip

This paper is organized as follows. In Section~\ref{sec:Operades}, we
set some notations about syntax trees and rewriting systems on syntax trees.
We then briefly recall the basics about operads. We also prove in this
section two important lemmas used in the sequel of the paper: the first
one deals with the form of elements of nonsymmetric operads generated by
a set of generators and the second one is a tool to prove presentations
by generators and relations of nonsymmetric operads using rewrite rules
on syntax trees. Section~\ref{sec:Foncteur} defines the construction~$\T$,
associating an operad with a monoid and establishes its first properties.
We show that this construction is a functor from the category
of monoids to the category of operads which respects injections and surjections.
Finally we apply this construction in Section~\ref{sec:ConstructionOperades}
on various monoids and obtain several new (symmetric or not) operads. We
construct in this way some operads on combinatorial objects which were not
provided with such a structure: planar rooted trees with a fixed arity,
Motzkin words, integer compositions, directed animals, and segmented integer
compositions. We also obtain new operads on objects which are already provided
with such a structure: endofunctions, parking functions, packed words,
permutations, planar rooted trees, and Schröder trees. By using the
construction~$\T$, we also give an alternative construction for the
diassociative operad~\cite{Lod01} and for the triassociative operad~\cite{LR04}.
\medskip

This paper is an extended version of~\cite{Gir12} and~\cite{Gir12b}. It
contains all proofs and new results like the presentations by generators and
relations of the considered nonsymmetric operads.
\medskip

{\it Acknowledgments.}
The author would like to thank N.~Bergeron, F.~Chapoton, F.~Hivert,
Y.~Le~Borgne, M. Méndez, J.-C.~Novelli, F.~Saliola, and Y.~Vargas for
their suggestions and interesting discussions. The computations of this
work have been done with the open-source mathematical software
Sage~\cite{Sage} and one of its extensions, Sage-Combinat~\cite{SageC}.
The author also wishes to thank the anonymous referees for their
highly valuable comments and suggestions.
\medskip

\section{Syntax trees and operads} \label{sec:Operades}
In this section, we set some notations about syntax trees and operads.
All our operads are reduced, {\em i.e.}, all of its elements have at
least one input. In the same way, all our syntax trees are reduced, {\em i.e.},
all of its internal nodes have at least one child. We also present in this
section some notions about rewriting systems on syntax trees that we
shall use to prove presentations of operads throughout the rest of the paper.
\medskip

\subsection{Rewriting systems on syntax trees}
In the same way as the elements of free monoids can be seen as words
and also are good objects to study monoids, the elements of free operads
can be seen as syntax trees are useful objects to manipulate these algebraic
structures.
\medskip

\subsubsection{Syntax trees}
We use in the sequel the standard terminology ({\em i.e.}, {\em node},
{\em edge}, {\em root}, {\em subtree}, {\em parent}, {\em child},
{\em path}, {\em etc.}) about trees (see for instance~\cite{AU94}). In
our graphical representations, internal nodes are represented by
circles~\scalebox{.25}{\tikz{\node[Noeud]{};}}, leaves by
squares~\scalebox{.25}{\tikz{\node[Feuille]{};}}, and edges by
segments~\scalebox{.25}{\tikz{\draw[Arete](0,0)--(0,1);}}. Each tree
is depicted so that its root is the uppermost node.
\medskip

Let~$S$ be a nonempty set. A {\em syntax tree on~$S$}, or simply a
{\em syntax tree} if~$S$ is fixed, is a planar rooted tree such that
internal nodes are labeled on~$S$. We shall denote by~$\CalT^S_n$ the set
of syntax trees on~$S$ with~$n$ leaves and by~$\CalT^S$ the
set~$\sqcup_{n \geq 1} \CalT^S_n$.
\medskip

Let~$T$ be a syntax tree. We denote by~$\NbNoeuds(T)$ (resp. $\NbFeuilles(T)$)
the number of internal nodes (resp. leaves) of~$T$. The {\em arity} of an
internal node is the number of its children. The {\em depth-first traversal}
of~$T$ consists in visiting the root of~$T$ and then, recursively visiting
by a depth-first traversal the subtrees of~$T$, from left to right. The
{\em $i$th internal node} (resp. {\em $i$th leaf}) of~$T$ is the $i$th
internal node (resp. $i$th leaf) of~$T$ visited by a depth-first traversal.
The {\em depth} of a node~$x$ of~$T$ is the length of the unique path
connecting~$x$ with the root of~$T$. Note that the depth of the root of~$T$
is~$0$. The {\em weight}~$\Poids(T)$ of~$T$ is the sum, for all internal
nodes~$x$ of~$T$, of the number of internal nodes of the rightmost
subtree of~$x$.
\medskip

For example,
\begin{equation}
    \begin{split}T := \enspace \end{split}
    \begin{split}\scalebox{.25}{\begin{tikzpicture}
        \node[Feuille](0)at(0.00,-4.50){};
        \node[Feuille](11)at(9.00,-4.50){};
        \node[Feuille](13)at(11.00,-4.50){};
        \node[Feuille](15)at(12.00,-4.50){};
        \node[Feuille](16)at(13.00,-6.00){};
        \node[Feuille](18)at(15.00,-6.00){};
        \node[Feuille](19)at(16.00,-3.00){};
        \node[Feuille](2)at(2.00,-4.50){};
        \node[Feuille](21)at(18.00,-3.00){};
        \node[Feuille](4)at(3.00,-3.00){};
        \node[Feuille](5)at(4.00,-4.50){};
        \node[Feuille](7)at(6.00,-4.50){};
        \node[Feuille](9)at(7.00,-4.50){};
        \node[Noeud,EtiqClair](1)at(1.00,-3.00){$1$};
        \node[Noeud,EtiqClair](10)at(8.00,-3.00){$1$};
        \node[Noeud,EtiqClair](12)at(10.00,-1.50){$2$};
        \node[Noeud,EtiqClair](14)at(12.00,-3.00){$1$};
        \node[Noeud,EtiqClair](17)at(14.00,-4.50){$2$};
        \node[Noeud,EtiqClair](20)at(17.00,-1.50){$2$};
        \node[Noeud,EtiqClair](3)at(3.00,-1.50){$2$};
        \node[Noeud,EtiqClair](6)at(5.00,-3.00){$2$};
        \node[Noeud,EtiqClair](8)at(11.00,0.00){$1$};
        \draw[Arete](0)--(1);
        \draw[Arete](1)--(3);
        \draw[Arete](10)--(12);
        \draw[Arete](11)--(10);
        \draw[Arete](12)--(8);
        \draw[Arete](13)--(14);
        \draw[Arete](14)--(12);
        \draw[Arete](15)--(14);
        \draw[Arete](16)--(17);
        \draw[Arete](17)--(14);
        \draw[Arete](18)--(17);
        \draw[Arete](19)--(20);
        \draw[Arete](2)--(1);
        \draw[Arete](20)--(8);
        \draw[Arete](21)--(20);
        \draw[Arete](3)--(8);
        \draw[Arete](4)--(3);
        \draw[Arete](5)--(6);
        \draw[Arete](6)--(3);
        \draw[Arete](7)--(6);
        \draw[Arete](9)--(10);
    \end{tikzpicture}}
    \end{split}
\end{equation}
is a syntax tree on the set~$\{1, 2\}$. It has~$9$ internal nodes,~$13$
leaves, its weight is~$5$, and the sequence of the labels of its internal
nodes visited by the depth-first traversal is $121221122$.
\medskip

\subsubsection{Syntax tree patterns}
Let~$T$ and~$\alpha$ be two syntax trees. We say that~$T$
{\em admits an occurrence} of~$\alpha$ {\em at the root} if one of the
following two conditions is satisfied:
\begin{enumerate}[label = ({\it \roman*})]
    \item the tree~$\alpha$ consists in exactly one leaf and no internal
    node;
    \item the roots of~$T$ and~$\alpha$ have same arities and same labels,
    and, by denoting by~$(T_1, \dots, T_k)$ and~$(\alpha_1, \dots, \alpha_k)$
    the sequence of the subtrees of~$T$ and~$\alpha$ from left to
    right,~$T_i$ admits an occurrence of~$\alpha_i$ at the root, for
    any~$i \in [k]$.
\end{enumerate}
We say that~$T$ {\em admits an occurrence} of~$\alpha$ if there is a node~$x$
of~$T$ such that the syntax tree rooted on~$x$ admits an occurrence
of~$\alpha$ at the root.
\medskip

For example, set
\begin{equation}
    \begin{split}T := \enspace \end{split}
    \begin{split}\scalebox{.25}{\begin{tikzpicture}
        \node[Feuille](0)at(0.00,-3.00){};
        \node[Feuille](10)at(10.00,-7.50){};
        \node[Feuille](12)at(12.00,-6.00){};
        \node[Feuille](14)at(13.00,-4.50){};
        \node[Feuille](15)at(14.00,-6.00){};
        \node[Feuille](17)at(16.00,-6.00){};
        \node[Feuille](2)at(2.00,-3.00){};
        \node[Feuille](4)at(4.00,-4.50){};
        \node[Feuille](6)at(6.00,-4.50){};
        \node[Feuille](8)at(8.00,-7.50){};
        \node[Noeud,EtiqClair](1)at(1.00,-1.50){$1$};
        \node[Noeud,Marque1,EtiqClair](11)at(11.00,-4.50){$1$};
        \node[Noeud,Marque1,EtiqClair](13)at(13.00,-3.00){$1$};
        \node[Noeud,EtiqClair](16)at(15.00,-4.50){$2$};
        \node[Noeud,EtiqClair](3)at(3.00,0.00){$2$};
        \node[Noeud,Marque1,EtiqClair](5)at(5.00,-3.00){$1$};
        \node[Noeud,Marque1,EtiqClair](7)at(7.00,-1.50){$2$};
        \node[Noeud,EtiqClair](9)at(9.00,-6.00){$2$};
        \draw[Arete](0)--(1);
        \draw[Arete](1)--(3);
        \draw[Arete](10)--(9);
        \draw[Arete](11)--(13);
        \draw[Arete](12)--(11);
        \draw[Arete](13)--(7);
        \draw[Arete](14)--(13);
        \draw[Arete](15)--(16);
        \draw[Arete](16)--(13);
        \draw[Arete](17)--(16);
        \draw[Arete](2)--(1);
        \draw[Arete](4)--(5);
        \draw[Arete](5)--(7);
        \draw[Arete](6)--(5);
        \draw[Arete](7)--(3);
        \draw[Arete](8)--(9);
        \draw[Arete](9)--(11);
    \end{tikzpicture}}
    \end{split}
    \qquad \mbox{and} \qquad
    \begin{split}\alpha := \enspace \end{split}
    \begin{split}\scalebox{.25}{\begin{tikzpicture}
        \node[Feuille](0)at(0.00,-3.00){};
        \node[Feuille](2)at(2.00,-3.00){};
        \node[Feuille](4)at(4.00,-4.50){};
        \node[Feuille](6)at(6.00,-4.50){};
        \node[Feuille](8)at(7.00,-3.00){};
        \node[Feuille](9)at(8.00,-3.00){};
        \node[Noeud,Marque1,EtiqClair](1)at(1.00,-1.50){$1$};
        \node[Noeud,Marque1,EtiqClair](3)at(3.00,0.00){$2$};
        \node[Noeud,Marque1,EtiqClair](5)at(5.00,-3.00){$1$};
        \node[Noeud,Marque1,EtiqClair](7)at(7.00,-1.50){$1$};
        \draw[Arete](0)--(1);
        \draw[Arete](1)--(3);
        \draw[Arete](2)--(1);
        \draw[Arete](4)--(5);
        \draw[Arete](5)--(7);
        \draw[Arete](6)--(5);
        \draw[Arete](7)--(3);
        \draw[Arete](8)--(7);
        \draw[Arete](9)--(7);
    \end{tikzpicture}}
    \end{split}\,.
\end{equation}
Then~$T$ admits one occurrence of~$\alpha$.
\medskip

\subsubsection{Rewrite rules on syntax trees}
Let~$S$ be a nonempty set and~$\CalT$ be a subset of~$\CalT^S$.
A {\em rewrite rule} on~$\CalT$ is a binary relation~$\mapsto$ on~$\CalT$
such that
\begin{equation}
    \alpha \mapsto \beta
    \quad \mbox{ implies } \quad
    \NbFeuilles(\alpha) = \NbFeuilles(\beta).
\end{equation}
A syntax tree~$T_0$ is {\em rewritable} into a syntax tree~$T_1$
by~$\mapsto$ if:
\begin{enumerate}[label = ({\it \roman*})]
    \item there are two syntax trees~$\alpha$ and~$\beta$ such
    that~$\alpha \mapsto \beta$;
    \item there is in~$T_0$ an occurrence of~$\alpha$;
    \item we obtain~$T_1$ by replacing an occurrence of~$\alpha$
    in~$T_0$ by~$\beta$.
\end{enumerate}
We denote this property by~$T_0 \to T_1$ and we call the pair~$(T_0, T_1)$
a {\em rewriting}. Moreover, if there is a sequence~$S_1, \dots, S_k$ of
syntax trees such that
\begin{equation}
    T_0 \to S_1 \to \dots \to S_k \to T_1,
\end{equation}
we say that~$T_0$ is {\em rewritable} into~$T_1$ by~$\to$ and we denote
this property by~$T_0 \xrightarrow{*} T_1$.
\medskip

If there is no infinite chain
\begin{equation}
    T_0 \to T_1 \to \cdots,
\end{equation}
we say that~$\mapsto$ is {\em terminating}. In this case, a syntax tree~$T$
that cannot be rewritten is a {\em normal form} for~$\mapsto$. When~$\mapsto$
is terminating and there are for all~$n \geq 1$ finitely many normal forms
for~$\mapsto$ with~$n$ leaves, we denote by~$\#(\mapsto_n)$ the number of
normal forms for~$\mapsto$ with~$n$ leaves. In this case, the
{\em generating series} of~$\mapsto$ is
\begin{equation}
    F_{\mapsto}(t) := \sum_{n \geq 1} \#(\mapsto_n) t^n.
\end{equation}
In the sequel, we shall make use of {\em regular specifications} to describe
normal forms of rewrite rules and obtain their generating series. Regular
specifications are formal grammars explaining how to build combinatorial objects
(see~\cite{FS09} for an introduction on regular specifications).
\medskip

To give an example, set~$S := \{1, 2, 3\}$ and consider the rewrite
rule~$\mapsto$ on~$\CalT^S$ defined by
\begin{equation}
    \begin{split}\scalebox{.25}{\begin{tikzpicture}
        \node[Feuille](0)at(0.,-3.){};
        \node[Noeud,EtiqClair](1)at(1.,-1.5){$1$};
        \node[Feuille](2)at(2.,-3.){};
        \node[Noeud,EtiqClair](3)at(3.,0.){$1$};
        \node[Feuille](4)at(4.,-1.5){};
        \draw[Arete](0)--(1);
        \draw[Arete](1)--(3);
        \draw[Arete](2)--(1);
        \draw[Arete](4)--(3);
    \end{tikzpicture}}\end{split}
    \begin{split} \quad \mapsto \quad \end{split}
    \begin{split}\scalebox{.25}{\begin{tikzpicture}
        \node[Feuille](0)at(0.,-1.5){};
        \node[Noeud,EtiqClair](1)at(1.,0.){$3$};
        \node[Feuille](2)at(1.,-1.5){};
        \node[Feuille](3)at(2.,-1.5){};
        \draw[Arete](0)--(1);
        \draw[Arete](2)--(1);
        \draw[Arete](3)--(1);
    \end{tikzpicture}}\end{split}
    \begin{split}\qquad \mbox{and} \qquad \end{split}
    \begin{split}\scalebox{.25}{\begin{tikzpicture}
        \node[Feuille](0)at(0.,-1.5){};
        \node[Noeud,EtiqClair](1)at(1.,0.){$1$};
        \node[Feuille](2)at(2.,-1.5){};
        \draw[Arete](0)--(1);
        \draw[Arete](2)--(1);
    \end{tikzpicture}}\end{split}
    \begin{split} \quad \mapsto \quad \end{split}
    \begin{split}\scalebox{.25}{\begin{tikzpicture}
        \node[Feuille](0)at(0.,-1.5){};
        \node[Noeud,EtiqClair](1)at(1.,0.){$2$};
        \node[Feuille](2)at(2.,-1.5){};
        \draw[Arete](0)--(1);
        \draw[Arete](2)--(1);
    \end{tikzpicture}}\end{split}\,.
\end{equation}
Here is a sequence of rewritings:
\begin{equation}
    \begin{split}\scalebox{.25}{\begin{tikzpicture}
        \node[Feuille](0)at(0.,-4.5){};
        \node[Noeud,EtiqClair](1)at(1.,-3.){$1$};
        \node[Feuille](2)at(2.,-4.5){};
        \node[Noeud,Marque1,EtiqClair](3)at(3.,-1.5){$1$};
        \node[Feuille](4)at(4.,-3.){};
        \node[Noeud,Marque1,EtiqClair](5)at(5.,0.){$1$};
        \node[Feuille](6)at(6.,-4.5){};
        \node[Noeud,EtiqClair](7)at(7.,-3.){$2$};
        \node[Feuille](8)at(8.,-4.5){};
        \node[Noeud,EtiqClair](9)at(9.,-1.5){$1$};
        \node[Feuille](10)at(10.,-4.5){};
        \node[Noeud,EtiqClair](11)at(11.,-3.){$3$};
        \node[Feuille](12)at(11.,-4.5){};
        \node[Feuille](13)at(12.,-4.5){};
        \draw[Arete](0)--(1);
        \draw[Arete](1)--(3);
        \draw[Arete](2)--(1);
        \draw[Arete](3)--(5);
        \draw[Arete](4)--(3);
        \draw[Arete](6)--(7);
        \draw[Arete](7)--(9);
        \draw[Arete](8)--(7);
        \draw[Arete](9)--(5);
        \draw[Arete](10)--(11);
        \draw[Arete](11)--(9);
        \draw[Arete](12)--(11);
        \draw[Arete](13)--(11);
    \end{tikzpicture}}\end{split}
    \begin{split} \quad \to \quad \end{split}
    \begin{split}\scalebox{.25}{\begin{tikzpicture}
        \node[Feuille](0)at(0.,-3.){};
        \node[Noeud,Marque1,EtiqClair](1)at(1.,-1.5){$1$};
        \node[Feuille](2)at(2.,-3.){};
        \node[Noeud,EtiqClair](3)at(3.,0.){$3$};
        \node[Feuille](4)at(3.,-1.5){};
        \node[Feuille](5)at(4.,-4.5){};
        \node[Noeud,EtiqClair](6)at(5.,-3.){$2$};
        \node[Feuille](7)at(6.,-4.5){};
        \node[Noeud,EtiqClair](8)at(7.,-1.5){$1$};
        \node[Feuille](9)at(8.,-4.5){};
        \node[Noeud,EtiqClair](10)at(9.,-3.){$3$};
        \node[Feuille](11)at(9.,-4.5){};
        \node[Feuille](12)at(10.,-4.5){};
        \draw[Arete](0)--(1);
        \draw[Arete](1)--(3);
        \draw[Arete](2)--(1);
        \draw[Arete](4)--(3);
        \draw[Arete](5)--(6);
        \draw[Arete](6)--(8);
        \draw[Arete](7)--(6);
        \draw[Arete](8)--(3);
        \draw[Arete](9)--(10);
        \draw[Arete](10)--(8);
        \draw[Arete](11)--(10);
        \draw[Arete](12)--(10);
    \end{tikzpicture}}\end{split}
    \begin{split} \quad \to \quad \end{split}
    \begin{split}\scalebox{.25}{\begin{tikzpicture}
        \node[Feuille](0)at(0.,-3.){};
        \node[Noeud,EtiqClair](1)at(1.,-1.5){$2$};
        \node[Feuille](2)at(2.,-3.){};
        \node[Noeud,EtiqClair](3)at(3.,0.){$3$};
        \node[Feuille](4)at(3.,-1.5){};
        \node[Feuille](5)at(4.,-4.5){};
        \node[Noeud,EtiqClair](6)at(5.,-3.){$2$};
        \node[Feuille](7)at(6.,-4.5){};
        \node[Noeud,Marque1,EtiqClair](8)at(7.,-1.5){$1$};
        \node[Feuille](9)at(8.,-4.5){};
        \node[Noeud,EtiqClair](10)at(9.,-3.){$3$};
        \node[Feuille](11)at(9.,-4.5){};
        \node[Feuille](12)at(10.,-4.5){};
        \draw[Arete](0)--(1);
        \draw[Arete](1)--(3);
        \draw[Arete](2)--(1);
        \draw[Arete](4)--(3);
        \draw[Arete](5)--(6);
        \draw[Arete](6)--(8);
        \draw[Arete](7)--(6);
        \draw[Arete](8)--(3);
        \draw[Arete](9)--(10);
        \draw[Arete](10)--(8);
        \draw[Arete](11)--(10);
        \draw[Arete](12)--(10);
    \end{tikzpicture}}\end{split}
    \begin{split} \quad \to \quad \end{split}
    \begin{split}\scalebox{.25}{\begin{tikzpicture}
        \node[Feuille](0)at(0.,-3.){};
        \node[Noeud,EtiqClair](1)at(1.,-1.5){$2$};
        \node[Feuille](2)at(2.,-3.){};
        \node[Noeud,EtiqClair](3)at(3.,0.){$3$};
        \node[Feuille](4)at(3.,-1.5){};
        \node[Feuille](5)at(4.,-4.5){};
        \node[Noeud,EtiqClair](6)at(5.,-3.){$2$};
        \node[Feuille](7)at(6.,-4.5){};
        \node[Noeud,EtiqClair](8)at(7.,-1.5){$2$};
        \node[Feuille](9)at(8.,-4.5){};
        \node[Noeud,EtiqClair](10)at(9.,-3.){$3$};
        \node[Feuille](11)at(9.,-4.5){};
        \node[Feuille](12)at(10.,-4.5){};
        \draw[Arete](0)--(1);
        \draw[Arete](1)--(3);
        \draw[Arete](2)--(1);
        \draw[Arete](4)--(3);
        \draw[Arete](5)--(6);
        \draw[Arete](6)--(8);
        \draw[Arete](7)--(6);
        \draw[Arete](8)--(3);
        \draw[Arete](9)--(10);
        \draw[Arete](10)--(8);
        \draw[Arete](11)--(10);
        \draw[Arete](12)--(10);
    \end{tikzpicture}}\end{split}.
\end{equation}

Since for any rewriting $T_0 \to T_1$,~$T_1$ has less nodes labeled by~$1$
than $T_0$,~$\mapsto$ is terminating. The normal forms of~$\mapsto$
are the syntax trees on~$S$ with no internal node labeled by~$1$ that has
two children.
\medskip

\subsection{Operads}
Let us now set, in our context, some definitions and notations about operads.
We shall use in the next sections the previous notions about syntax trees to
handle elements of free nonsymmetric operads and establish presentations by
generators and relations of nonsymmetric operads.
\medskip

\subsubsection{Nonsymmetric operads}
A {\em nonsymmetric operad}, or a {\em ns operad} for short, is a collection
\begin{equation}
    \CalP := \bigsqcup_{n \geq 1} \CalP(n),
\end{equation}
together with {\em partial composition maps}
\begin{equation}
    \circ_i : \CalP(n) \times \CalP(m) \to \CalP(n + m - 1),
    \qquad n, m \geq 1, i \in [n],
\end{equation}
and a distinguished element $\Unite \in \CalP(1)$, the {\em unit} of $\CalP$.
The above data has to satisfy the following relations:
\begin{equation} \label{eq:AssocSerie}
    (x \circ_i y) \circ_{i + j - 1} z = x \circ_i (y \circ_j z),
    \qquad x \in \CalP(n), y \in \CalP(m),
    z \in \CalP(k), i \in [n], j \in [m],
\end{equation}
\begin{equation} \label{eq:AssocParallele}
    (x \circ_i y) \circ_{j + m - 1} z = (x \circ_j z) \circ_i y,
    \qquad x \in \CalP(n), y \in \CalP(m),
    z \in \CalP(k), 1 \leq i < j \leq n,
\end{equation}
\begin{equation} \label{eq:Unite}
    \Unite \circ_1 x = x = x \circ_i \Unite,
    \qquad x \in \CalP(n), i \in [n].
\end{equation}
\medskip

One of the simplest ns operads is the {\em associative commutative operad}~$\Com$.
It is defined for all~$n \geq 1$ by
\begin{equation}
    \Com(n) := \left\{\alpha_n\right\},
\end{equation}
and the partial composition maps are defined by
\begin{equation}
    \alpha_n \circ_i \alpha_m := \alpha_{n + m - 1},
\end{equation}
for all~$n, m \geq 1$ and~$i \in [n]$.
\medskip

Let us now fix some terminology about ns operads and recall basic definitions.
The {\em arity} of an element~$x$ of~$\CalP(n)$ is~$n$ and is denoted
by~$|x|$. If~$\CalQ$ is a ns operad, a map~$\phi : \CalP \to \CalQ$ is a
{\em ns operad morphism} if it maps elements of arity~$n$ of~$\CalP$ to elements
of arity~$n$ of~$\CalQ$ and commutes with partial composition maps. The ns
operad~$\CalQ$ is a {\em ns suboperad} of~$\CalP$ if~$\CalQ \subseteq \CalP$,
$\Unite \in \CalQ$, and~$\CalQ$ is closed for the partial composition maps.
\medskip

For any set~$G \subseteq \CalP$, the {\em ns operad generated by~$G$} is the
smallest ns suboperad of~$\CalP$ which contains every element of~$G$. When the
ns operad generated by~$G$ is~$\CalP$ itself and~$G$ is minimal with respect to
inclusion among the subsets of~$\CalP$ satisfying this property, $G$ is a
{\em generating set} of~$\CalP$ and its elements are {\em generators} of~$\CalP$.
We say that~$\CalP$ is {\em finitely generated} if it admits a finite generating
set.
\medskip

The {\em Hilbert series} of a ns operad~$\CalP$ containing, for all~$n \geq 1$,
finitely many elements of arity~$n$ is the series
\begin{equation}
    F_\CalP(t) := \sum_{n \geq 1} \# \CalP(n) \, t^n.
\end{equation}
A {\em combinatorial ns operad} is a ns operad which admits an Hilbert series
and such that its only element of arity~$1$ is its unit.
\medskip

The {\em composition map} of~$\CalP$ is the mapping
\begin{equation}
    \circ : \CalP(n) \times \CalP(m_1) \times \dots \times \CalP(m_n) \to
    \CalP(m_1 + \dots + m_n),
\end{equation}
defined using partial composition maps~$\circ_i$ by
\begin{equation}
    x \circ [y_1, \dots, y_n] :=
    (\dots ((x \circ_n y_n) \circ_{n - 1} y_{n - 1}) \dots) \circ_1 y_1.
\end{equation}
The ns operad~$\CalP$ is {\em basic} if for all $y_1, \dots, y_n \in \CalP$,
the maps
\begin{equation}
    \gamma_{y_1, \dots, y_n} : \CalP(n) \to \CalP(|y_1| + \dots + |y_n|),
\end{equation}
defined by
\begin{equation}
    \gamma_{y_1, \dots, y_n}(x) := x \circ [y_1, \dots, y_n]
\end{equation}
are injective.
\medskip

\subsubsection{Symmetric operads}
Let~$\EnsPermu_n$ be the group of permutations of~$[n]$. Any permutation~$\sigma$
is denoted as a word~$\sigma_1 \dots \sigma_n$ in such a way that the
$i$th letter $\sigma_i$ is the image of~$i$. For instance, the word~$312$
represents the bijection~$1 \mapsto 3$, $2 \mapsto 1$, $3 \mapsto 2$.
\medskip

To define what is an operad, we need the following definition. Let~$\Permu$
be the ns operad satisfying for all~$n \geq 1$,
\begin{equation}
    \Permu(n) := \EnsPermu_n,
\end{equation}
and, for all~$\sigma \in \Permu(n)$, $\nu \in \Permu(m)$, and~$i \in [n]$,
\begin{equation}
    \sigma \circ_i \nu :=
    \sigma'_1 \dots \sigma'_{i - 1}
    \nu'_1 \dots \nu'_m
    \sigma'_{i + 1} \dots \sigma'_n,
\end{equation}
where
\begin{equation}
    \nu'_j := \nu_j + \sigma_i - 1, \qquad \mbox{for all $j \in [m]$},
\end{equation}
and
\begin{equation}
    \sigma'_j :=
    \begin{cases}
        \sigma_j & \mbox{if $\sigma_j < \sigma_i$}, \\
        \sigma_j + m - 1 & \mbox{otherwise},
    \end{cases}
    \qquad \mbox{for all $j \in [n]$.}
\end{equation}
This ns operad is known as the {\em associative noncommutative operad}.
\medskip

For instance, here are two examples of compositions in $\Permu$
\begin{equation}
    \textcolor{Bleu}{1}{\bf 2}\textcolor{Bleu}{3}
    \circ_2
    \textcolor{Rouge}{12}
    = \textcolor{Bleu}{1}\textcolor{Rouge}{23}\textcolor{Bleu}{4},
\end{equation}
\begin{equation}
    \textcolor{Bleu}{741}{\bf 5}\textcolor{Bleu}{623}
    \circ_4
    \textcolor{Rouge}{231}
    = \textcolor{Bleu}{941}\textcolor{Rouge}{675}\textcolor{Bleu}{823}.
\end{equation}
\medskip

A {\em symmetric operad}, or an {\em operad} for short, is a ns
operad~$\CalP$ together with a map
\begin{equation}
    \cdot : \CalP(n) \times \Permu(n) \to \CalP(n), \qquad n \geq 1,
\end{equation}
which satisfies the following relation:
\begin{equation} \label{eq:Equivariance}
    (x \cdot \sigma) \circ_i (y \cdot \nu) =
    \left(x \circ_{\sigma_i} y\right) \cdot
    \left(\sigma \circ_i \nu \right),
    \qquad x \in \CalP(n), y \in \CalP(m),
    \sigma \in \Permu(n), \nu \in \Permu(m), i \in [n],
\end{equation}
in such a way that~$\cdot$ also is a symmetric group action. Note that any
operad~$\CalP$ is also (and thus can be seen as) a ns operad by forgetting its
action of $\Permu$.
\medskip

If~$\CalQ$ is an operad, a map~$\phi : \CalP \to \CalQ$ is an
{\em operad morphism} if it is a ns operad morphism that commutes
with~$\cdot$. The operad~$\CalQ$ is a {\em suboperad} of~$\CalP$
if~$\CalQ$ is a ns suboperad of~$\CalP$ and~$\CalQ$ is closed
for~$\cdot$.
\medskip

For any set~$G \subseteq \CalP$, the {\em operad generated by~$G$} is the
smallest suboperad of~$\CalP$ which contains every element of~$G$. When the
operad generated by~$G$ is~$\CalP$ itself and~$G$ is minimal with respect to
inclusion among the subsets of~$\CalP$ satisfying this property, $G$ is a
{\em generating set} of~$\CalP$ and its elements are {\em generators} of~$\CalP$.
We say that~$\CalP$ is {\em finitely generated} if it admits a finite generating
set.
\medskip

\subsection{Presentation of nonsymmetric operads}
We now focus on ns operads and present the tools we will need to establish
presentations by generators and relations of ns operads.
\medskip

\subsubsection{Free ns operads}
Let~$S := \sqcup_{n \geq 1} S(n)$ be a set. The {\em free ns operad}
over~$S$ is the ns operad~$\CalF(S)$ defined as follows. For any~$n \geq 1$,
the set~$\CalF(S)(n)$ is the set of syntax trees on~$S$ with~$n$ leaves
and where internal nodes which have~$k$ children are labeled on~$S(k)$.
The composition~$x \circ_i y$ of two elements of~$\CalF(S)$ consists in
grafting the root of~$y$ on the $i$th leaf of~$x$. The unit~$\Unite$
of~$\CalF(S)$ is the tree with no internal node and hence exactly one leaf.
The {\em degree}~$\Deg(x)$ of an element~$x$ of~$\CalF(S)$ is its
number~$\NbNoeuds(x)$ of internal nodes and its {\em arity}~$|x|$
is its number~$\NbFeuilles(x)$ of leaves. If~$\Deg(x) = 1$, the
{\em label}~$\Etiq(x)$ of~$x$ is the element of~$S$ which labels the only
internal node of~$x$.
\medskip

Let~$S := S(1) \sqcup S(2) \sqcup S(3)$, where $S(1) := \{\La\}$,
$S(2) := \{\Lb, \Lc\}$, and $S(3) := \{\Ld\}$. Then,
\begin{equation}
    \begin{split} x := \end{split}
    \begin{split}\scalebox{.25}{
    \begin{tikzpicture}
        \node[Noeud,EtiqClair](0)at(0.,-3.){$\La$};
        \node[Feuille](1)at(0.,-4.5){};
        \node[Noeud,EtiqClair](2)at(1.,-1.5){$\Lc$};
        \node[Feuille](3)at(2.,-3.){};
        \node[Noeud,EtiqClair](4)at(3.,0.){$\Lb$};
        \node[Feuille](5)at(4.,-4.5){};
        \node[Noeud,EtiqClair](6)at(5.,-3.){$\Lc$};
        \node[Feuille](7)at(6.,-4.5){};
        \node[Noeud,EtiqClair](8)at(7.,-1.5){$\Ld$};
        \node[Noeud,EtiqClair](9)at(7.,-3.){$\La$};
        \node[Feuille](10)at(7.,-4.5){};
        \node[Feuille](11)at(8.,-4.5){};
        \node[Noeud,EtiqClair](12)at(9.,-3.){$\Ld$};
        \node[Feuille](13)at(9.,-4.5){};
        \node[Feuille](14)at(10.,-4.5){};
        \draw[Arete](0)--(2);
        \draw[Arete](1)--(0);
        \draw[Arete](2)--(4);
        \draw[Arete](3)--(2);
        \draw[Arete](5)--(6);
        \draw[Arete](6)--(8);
        \draw[Arete](7)--(6);
        \draw[Arete](8)--(4);
        \draw[Arete](9)--(8);
        \draw[Arete](10)--(9);
        \draw[Arete](11)--(12);
        \draw[Arete](12)--(8);
        \draw[Arete](13)--(12);
        \draw[Arete](14)--(12);
    \end{tikzpicture}}\end{split}
    \qquad \mbox{and} \qquad
    \begin{split} y := \end{split}
    \begin{split}\scalebox{.25}{
    \begin{tikzpicture}
        \node[Feuille](0)at(0.,-3.){};
        \node[Noeud,Marque1,EtiqClair](1)at(1.,-1.5){$\Lb$};
        \node[Feuille](2)at(2.,-3.){};
        \node[Noeud,Marque1,EtiqClair](3)at(3.,0.){$\Ld$};
        \node[Feuille](4)at(3.,-1.5){};
        \node[Noeud,Marque1,EtiqClair](5)at(5.,-1.5){$\La$};
        \node[Feuille](6)at(5.,-3.){};
        \draw[Arete](0)--(1);
        \draw[Arete](1)--(3);
        \draw[Arete](2)--(1);
        \draw[Arete](4)--(3);
        \draw[Arete](5)--(3);
        \draw[Arete](6)--(5);
    \end{tikzpicture}}\end{split}
\end{equation}
are two elements of~$\CalF(S)$. The arity of~$x$ is~$8$ and its degree is~$7$.
The arity of~$y$ is~$4$ and its degree is~$3$. Moreover, one has in~$\CalF(S)$
the following composition
\begin{equation}
    \begin{split} x \circ_2 y = \enspace \end{split}
    \begin{split}\scalebox{.25}{
    \begin{tikzpicture}
        \node[Noeud,EtiqClair](0)at(0.,-3.){$\La$};
        \node[Feuille](1)at(0.,-4.5){};
        \node[Noeud,EtiqClair](2)at(1.,-1.5){$\Lc$};
        \node[Feuille](3)at(2.,-6.){};
        \node[Noeud,Marque1,EtiqClair](4)at(3.,-4.5){$\Lb$};
        \node[Feuille](5)at(4.,-6.){};
        \node[Noeud,Marque1,EtiqClair](6)at(5.,-3.){$\Ld$};
        \node[Feuille](7)at(5.,-4.5){};
        \node[Noeud,Marque1,EtiqClair](8)at(7.,-4.5){$\La$};
        \node[Feuille](9)at(7.,-6.){};
        \node[Noeud,EtiqClair](10)at(7.,0.){$\Lb$};
        \node[Feuille](11)at(8.,-4.5){};
        \node[Noeud,EtiqClair](12)at(9.,-3.){$\Lc$};
        \node[Feuille](13)at(10.,-4.5){};
        \node[Noeud,EtiqClair](14)at(11.,-1.5){$\Ld$};
        \node[Noeud,EtiqClair](15)at(11.,-3.){$\La$};
        \node[Feuille](16)at(11.,-4.5){};
        \node[Feuille](17)at(12.,-4.5){};
        \node[Noeud,EtiqClair](18)at(13.,-3.){$\Ld$};
        \node[Feuille](19)at(13.,-4.5){};
        \node[Feuille](20)at(14.,-4.5){};
        \draw[Arete](0)--(2);
        \draw[Arete](1)--(0);
        \draw[Arete](2)--(10);
        \draw[Arete](3)--(4);
        \draw[Arete](4)--(6);
        \draw[Arete](5)--(4);
        \draw[Arete](6)--(2);
        \draw[Arete](7)--(6);
        \draw[Arete](8)--(6);
        \draw[Arete](9)--(8);
        \draw[Arete](11)--(12);
        \draw[Arete](12)--(14);
        \draw[Arete](13)--(12);
        \draw[Arete](14)--(10);
        \draw[Arete](15)--(14);
        \draw[Arete](16)--(15);
        \draw[Arete](17)--(18);
        \draw[Arete](18)--(14);
        \draw[Arete](19)--(18);
        \draw[Arete](20)--(18);
    \end{tikzpicture}}\end{split}.
\end{equation}
\medskip

\subsubsection{Evaluations}
Let $\left(\CalP, \circ_i^\CalP, \Unite_\CalP\right)$ be a ns operad
and~$X := \sqcup_{n \geq 1} X(n)$ be a set of elements of~$\CalP$,
where~$X(n) \subseteq \CalP(n)$ for all~$n \geq 1$. The
{\em evaluation}~$\Eval$ is the mapping
\begin{equation}
    \Eval : \CalF(X) \to \CalP,
\end{equation}
recursively defined by~$\Eval(\Unite) := \Unite_\CalP$, $\Eval(x) := \Etiq(x)$
if~$\Deg(x) = 1$, and
\begin{equation}
    \Eval(x) := \Eval(y) \circ_i^\CalP \Eval(z)
\end{equation}
if~$\Deg(x) \geq 2$, where~$y, z \in \CalF(X) \setminus \{\Unite\}$,
$i \in \left[|y|\right]$, and~$x = y \circ_i z$.
\medskip

In other words, we can see an element~$x$ of~$\CalF(X)$ as a tree-like
expression for an element~$\Eval(x)$ of~$\CalP$. Moreover, it is easy, by
induction on the degree, to prove that~$\Eval$ is a well-defined surjective
mapping and hence, is a ns operad morphism.
\medskip

\subsubsection{Ns operadic congruences}
An {\em ns operadic congruence} over a ns operad~$\CalP$ is an equivalence
relation~$\equiv$ on its elements such that~$x \equiv x'$ implies~$|x| = |x'|$,
and for all~$x, x', y, y' \in \CalP$ and~$i \in [|x|]$,
\begin{equation}
    x \equiv x' \mbox{ and } y \equiv y'
    \quad \mbox{ implies } \quad
    x \circ_i y \equiv x' \circ_i y'.
\end{equation}
\medskip

Given a set~$S := \sqcup_{n \geq 1} S(n)$ and an operadic congruence~$\equiv$
over the free ns operad~$\CalF(S)$, one can construct a {\em ns quotient operad}
$\CalF(S)/_\equiv$ of~$\CalF(S)$ defined as follows. We set
\begin{equation}
    \CalF(S)/_\equiv(n) := \left\{[x]_\equiv : x \in \CalF(S)(n)\right\},
    \qquad n \geq 1,
\end{equation}
where~$[x]_\equiv$ is the $\equiv$-equivalence class of~$x$, and
\begin{equation}
    [x]_\equiv \circ_i [y]_\equiv := \left[x \circ_i y\right]_\equiv,
\end{equation}
where~$x$ and~$y$ are any elements of~$\CalF(S)$ such that~$x \in [x]_\equiv$
and~$y \in [y]_\equiv$.
\medskip

\subsubsection{Presentation by generators and relations}
In the sequel, we shall define ns operadic congruences~$\equiv$ over free
ns operads~$\CalF(S)$ through equivalence relations~$\leftrightarrow$ on the
set~$\CalF(S)$. The {\em congruence generated by}~$\leftrightarrow$ is
the most refined ns operadic congruence~$\equiv$ containing~$\leftrightarrow$.
\medskip

Besides, we say that a relation~$\mapsto$ on~$\CalF(S)$ is an
{\em orientation} of~$\leftrightarrow$ if~$\mapsto$ is the finest
relation such that its reflexive, symmetric, and transitive closure
is~$\leftrightarrow$. The link between rewrite rules on syntax trees
and ns operads relies on the fact that orientations can be regarded
as rewrite rules.
\medskip

A {\em presentation} of a ns operad~$\CalP$ consists in a
set~$S := \sqcup_{n \geq 1} S(n)$ and a ns operadic congruence~$\equiv$
over~$\CalF(S)$ such that~$\CalP = \CalF(S)/_\equiv$. When~$S(2) \ne \emptyset$
and~$S(n) = \emptyset$ for all~$n \ne 2$, $\CalP$ is called {\em binary}.
When~$\equiv$ can be generated as a ns operadic congruence by an equivalence
relation~$\leftrightarrow$ on~$\CalF(S)$ only involving elements of
degree~$2$, $\CalP$ is called {\em quadratic}.
\medskip

The following lemma presents a description of the elements of ns operads
generated by a set of generators.
\begin{Lemme} \label{lem:GenerationOpNSEns}
    Let~$\CalP$ be a ns operad generated by a set~$G$ of generators.
    Then any element~$x$ of~$\CalP$ different from the unit of~$\CalP$
    can be written as
    \begin{equation}
        x = y \circ_i g,
    \end{equation}
    where~$y \in \CalP(n)$, $n \geq 1$, $g \in G$, and~$i \in [n]$.
\end{Lemme}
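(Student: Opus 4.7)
My plan is to introduce the subset
\[
S := \{\Unite\} \cup \bigcup_{n \geq 1} \{y \circ_i g : y \in \CalP(n),\, g \in G,\, i \in [n]\}
\]
of $\CalP$ and to prove that $S$ is a suboperad of $\CalP$ containing $G$. Since $\langle G \rangle$ is, by definition, the smallest suboperad of $\CalP$ containing $G$, and since $\CalP = \langle G \rangle$ by hypothesis, this forces $S = \CalP$, which is exactly the claim of the lemma.

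The first two requirements are immediate. The unit $\Unite$ lies in $S$ by construction, and for every generator $g \in G$ one has $g = \Unite \circ_1 g$ with $\Unite \in \CalP(1)$, hence $g \in S$, so $G \subseteq S$.

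The core of the argument consists in verifying that $S$ is closed under the grafting maps of $\CalP$. Pick $x, x' \in S$ together with a valid index $j$ for the composition $x \circ_j x'$. If $x' = \Unite$, the unit axiom \eqref{eq:Unite} yields $x \circ_j x' = x$, which already lies in $S$. Otherwise $x' = y' \circ_{i'} g'$ for some $y' \in \CalP$, some $g' \in G$, and some valid index $i'$; reading the series associativity axiom \eqref{eq:AssocSerie} from right to left then gives
\[
x \circ_j x' = x \circ_j (y' \circ_{i'} g') = (x \circ_j y') \circ_{j + i' - 1} g',
\]
which is visibly of the form $z \circ_k g'$ with $g' \in G$, and therefore lies in $S$.

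The only mildly delicate point is the correct reindexing when applying \eqref{eq:AssocSerie}; the rest is a direct unpacking of the operad axioms. Notably, no induction on any notion of size of $x$ or of length of a generating expression is needed: the closure property of $S$ under grafting, together with the minimality of $\langle G \rangle$, does all the work.
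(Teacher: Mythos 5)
Your proof is correct, and it takes a genuinely different route from the paper's. The paper argues by induction on the arity: a non-generator, non-unit $x$ is written as $x = y \circ_j z$ with both factors of strictly smaller arity (this is where the augmentation hypothesis is used), the induction hypothesis gives $z = z' \circ_k g$, and the reassociation \eqref{eq:AssocSerie} yields $x = \left(y \circ_j z'\right) \circ_{j + k - 1} g$. You instead exhibit the set $S$ of elements of the claimed form (together with the unit) directly as a suboperad of $\CalP$ containing $G$, and conclude by the minimality of $\langle G \rangle$; the closure computation rests on exactly the same reassociation identity, and your reindexing $x \circ_j \left(y' \circ_{i'} g'\right) = \left(x \circ_j y'\right) \circ_{j + i' - 1} g'$ is the correct instance of \eqref{eq:AssocSerie}. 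Your packaging buys two things: it needs no induction and, notably, no use of the augmentation hypothesis, so it proves the statement for an arbitrary operad generated by $G$; and it sidesteps a step the paper leaves implicit, namely that every non-unit, non-generator element of $\langle G \rangle$ admits a factorization $y \circ_j z$ with both factors distinct from the unit (which is what makes the paper's arity induction go through). What the paper's induction buys in exchange is a formulation that is explicitly ``arity by arity,'' matching the way the lemma is invoked later, e.g.\ in Propositions~\ref{prop:MotsAPE} and~\ref{prop:MotsFCat}.
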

\begin{proof}
    Since the map~$\Eval : \CalF(G) \to \CalP$ is surjective, $x$ admits
    a tree-like expression~$x' \in \CalF(G)$ satisfying~$\Eval(x') = x$.
    Since~$x$ is different from the unit of~$\CalP$, one has~$x' = y' \circ_i g'$
    for two syntax trees~$y'$ and~$g'$ of~$\CalF(G)$ such that~$g'$ has exactly
    one internal node. Then, by setting~$y := \Eval(y')$ and~$g := \Eval(g')$,
    we have, since~$\Eval$ is a ns operad morphism, $x = y \circ_i g$.
\end{proof}
\medskip

We shall use in the sequel Lemma~\ref{lem:GenerationOpNSEns} to study ns
operads~$\CalQ$ generated by a subset of elements of a bigger ns operad~$\CalP$.
It allows us to describe~$\CalQ$ arity by arity because any element of~$\CalQ$
can be obtained by composing an element of a smaller arity with a generator.
\medskip

The following lemma presents a tool for showing that a given combinatorial
ns operad admits a specified presentation.
\begin{Lemme} \label{lem:PresentationReecriture}
    Let $\CalP$ be a combinatorial ns operad generated by a set~$G$ of
    generators and~$\equiv$ be a ns operadic congruence over~$\CalF(G)$
    generated by an equivalence relation~$\leftrightarrow$ on~$\CalF(G)$.
    If the following two conditions are satisfied together:
    \begin{enumerate}[label = ({\it \roman*})]
        \item for all $x, x' \in \CalF(G)$, $x \leftrightarrow x'$ implies
        $\Eval(x) = \Eval(x')$;
        \label{item:ReecritureConditionEquiv}
        \item there exists an orientation~$\mapsto$ of~$\leftrightarrow$
        such that~$\mapsto$ is terminating and has
        as many normal forms of arity~$n$ as elements of~$\CalP$ of arity~$n$;
        \label{item:ReecritureConditionFN}
    \end{enumerate}
    then~$\CalP$ admits the presentation~$\CalP = \CalF(G)/_\equiv$.
\end{Lemme}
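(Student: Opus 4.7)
The plan is to produce an operad isomorphism between $\CalF(G)/_\equiv$ and $\CalP$ by combining the evaluation map $\Eval$ with a counting argument based on normal forms.

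First I would show that $\Eval : \CalF(G) \to \CalP$ factors through the quotient by $\equiv$, yielding a well-defined operad morphism $\overline{\Eval} : \CalF(G)/_\equiv \to \CalP$. Indeed, the binary relation $\sim$ on $\CalF(G)$ defined by $x \sim x'$ iff $\Eval(x) = \Eval(x')$ is automatically an operadic congruence because $\Eval$ is an operad morphism. Condition~\ref{item:ReecritureConditionEquiv} says exactly that $\leftrightarrow\, \subseteq\, \sim$, and since $\equiv$ is by definition the smallest operadic congruence containing $\leftrightarrow$, we obtain $\equiv\, \subseteq\, \sim$. Thus $\Eval$ is constant on $\equiv$-classes, and the induced map $\overline{\Eval}$ is an operad morphism.

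Next I would observe that $\overline{\Eval}$ is surjective. Since $\CalP$ is generated by $G$, every element of $\CalP$ can be built from elements of $G$ using the grafting maps and the unit, and an easy induction (essentially the same as in Lemma~\ref{lem:GenerationOpNSEns}) shows that every element of $\CalP$ lies in the image of $\Eval$. Hence $\#\!\left(\CalF(G)/_\equiv(n)\right) \geq \#\CalP(n)$ for each $n$.

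Then comes the counting half, which is the point of condition~\ref{item:ReecritureConditionFN}. Because $\mapsto$ is terminating, repeatedly rewriting any $x \in \CalF(G)$ eventually yields a normal form $\widehat{x}$ of the same arity; since $\mapsto$ orients $\leftrightarrow$, each elementary rewriting stays inside a single $\equiv$-class, so $x \equiv \widehat{x}$. Consequently every $\equiv$-class of arity $n$ contains at least one normal form, and hence $\#\!\left(\CalF(G)/_\equiv(n)\right) \leq \#(\mapsto_n) = \#\CalP(n)$, the last equality being condition~\ref{item:ReecritureConditionFN}. Combining with the previous inequality, both sides are equal and finite (using that $\CalP$ is combinatorial), so the surjection $\overline{\Eval}$ is a bijection and hence an isomorphism of operads, giving the presentation $\CalP = \CalF(G)/_\equiv$.

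The main (minor) obstacle is the bookkeeping in the first step: one must be careful that the kernel relation of $\Eval$ really is an operadic congruence (so that generating $\equiv$ from $\leftrightarrow$ inside it is automatic), and that the orientation relation $\mapsto$ keeps every rewriting inside a single $\equiv$-class, so that normal forms are genuine representatives of classes. Once both points are secured, the cardinality squeeze does the rest.
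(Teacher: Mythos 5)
Your proof is correct and follows essentially the same route as the paper: factor $\Eval$ through the quotient, observe surjectivity from the fact that $G$ generates $\CalP$, and then squeeze the cardinalities using the existence of a normal form in every $\equiv$-class together with condition~\ref{item:ReecritureConditionFN}. The only (cosmetic) difference is that you justify well-definedness via the kernel congruence of $\Eval$ and minimality of $\equiv$, whereas the paper argues step by step along a chain of rewritings; both are valid.
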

\begin{proof}
    The definition of the evaluation map $\Eval$
    and~\ref{item:ReecritureConditionEquiv} imply that the map
    \begin{equation}
        \phi : \CalF(G)/_\equiv \to \CalP
    \end{equation}
    defined for any~$x \in \CalF(G)$
    by~$\phi\left([x]_\equiv\right) := \Eval(x)$ is a surjective ns operad
    morphism.
    \smallskip

    Since~$\mapsto$ is an orientation of~$\leftrightarrow$, for
    any~$x \in \CalF(G)$, there is at least one normal form for~$\mapsto$
    in~$[x]_\equiv$ and
    by~\ref{item:ReecritureConditionFN}, for all~$n \geq 1$, we have
    \begin{equation} \label{eq:CardinauxReecriture}
        \# \CalP(n) = \#(\mapsto_n) \; \geq \# \CalF(G)/_\equiv(n).
    \end{equation}
    This, together with the fact that~$\phi$ is surjective, implies
    that~$\phi$ also is an isomorphism. Hence, $\CalP$ admits the claimed
    presentation.
\end{proof}
\medskip

\section{A combinatorial functor from monoids to operads}
\label{sec:Foncteur}
We describe in this section the main ingredient of this paper, namely the
{\em construction~$\T$}. This functorial construction associates an
operad~$\T M$ with any monoid~$M$ and an operad
morphism~$\T \theta : \T M \to \T N$ with any monoid
morphism~$\theta : M \to N$.

\subsection{The construction} \label{subsec:Construction}

\subsubsection{From monoids to operads}
Let $(M, \bullet, 1)$ be a monoid. Let us denote by~$\T M$ the collection
\begin{equation}
    \T M := \bigsqcup_{n \geq 1} \T M(n),
\end{equation}
where for all~$n \geq 1$,
\begin{equation}
    \T M(n) := \left\{(x_1, \dots, x_n) : x_i \in M
    \mbox{ for all $i \in [n]$}\right\}.
\end{equation}
We endow the set~$\T M$ with maps
\begin{equation} \label{eq:TDomaineSubs}
    \circ_i : \T M(n) \times \T M(m) \to \T M(n + m - 1),
    \qquad n, m \geq 1, i \in [n],
\end{equation}
defined as follows: for all $x \in \T M(n)$, $y \in \T M(m)$, and~$i \in [n]$,
\begin{equation} \label{eq:TSub}
    x \circ_i y := (x_1, \dots, x_{i-1},
    x_i \bullet y_1, \dots, x_i \bullet y_m, x_{i+1}, \dots, x_n).
\end{equation}
Let us also set~$\Unite := (1)$ as a distinguished element of~$\T M(1)$.
We endow finally each set~$\T M(n)$ with a map
\begin{equation}
    \cdot : \T M(n) \times \Permu(n) \to \T M(n), \qquad n \geq 1,
\end{equation}
defined as follows: for all $x \in \T M(n)$ and $\sigma \in \Permu(n)$,
\begin{equation}
    x \cdot \sigma := \left(x_{\sigma_1}, \dots, x_{\sigma_n}\right).
\end{equation}

The elements of~$\T M$ are words over~$M$ regarded as an alphabet. The
arity~$|x|$ of an element~$x$ of~$\T M(n)$ is~$n$. For the sake of
readability, we shall denote in some cases an element $(x_1, \dots, x_n)$
of~$\T M(n)$ by its {\em word notation}~$x_1 \dots x_n$.
\medskip

\begin{Proposition} \label{prop:TOperade}
    If~$M$ is a monoid, then~$\T M$ is an operad.
\end{Proposition}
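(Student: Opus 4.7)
The plan is to verify each axiom of a symmetric operad directly from the definitions. Since $\T M$ is built out of the monoid operation $\bullet$ of $M$ in a very literal way, each axiom should reduce to either the associativity of $\bullet$, the unit property $1 \bullet x = x = x \bullet 1$, or pure index bookkeeping. The key point throughout is that $x \circ_i y$ simply replaces the $i$th letter $x_i$ of $x$ by the word $(x_i \bullet y_1, \dots, x_i \bullet y_m)$.

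First I would establish the serial associativity~\eqref{eq:AssocSerie}. Taking $x \in \T M(n)$, $y \in \T M(m)$, $z \in \T M(k)$, and positions $i \in [n]$, $j \in [m]$, both sides of $(x \circ_i y) \circ_{i+j-1} z = x \circ_i (y \circ_j z)$ produce a word of length $n+m+k-2$ whose letters outside the central $z$-block are $x_1, \dots, x_{i-1}, x_i \bullet y_1, \dots, x_i \bullet y_{j-1}, x_i \bullet y_{j+1}, \dots, x_i \bullet y_m, x_{i+1}, \dots, x_n$. In the central block, the left-hand side yields $(x_i \bullet y_j) \bullet z_\ell$ at each position, while the right-hand side yields $x_i \bullet (y_j \bullet z_\ell)$; these agree by associativity of $\bullet$. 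Next, for parallel associativity~\eqref{eq:AssocParallele} with $i < j$, both compositions leave the $x_1, \dots, x_{i-1}$ prefix and $x_{j+1}, \dots, x_n$ suffix unchanged, insert the $y$-block at position $i$ (with letters $x_i \bullet y_\ell$) and the $z$-block at position $j$ (with letters $x_j \bullet z_\ell$), so both orders of grafting give the same word. The unit relation~\eqref{eq:Unite} follows from $1 \bullet x_\ell = x_\ell$ and $x_i \bullet 1 = x_i$.

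For the symmetric structure I would first check that $\cdot$ is an action: $x \cdot \mathrm{id} = x$ is immediate, and $(x \cdot \sigma) \cdot \nu = x \cdot (\sigma \circ \nu)$ follows by tracking where each letter is sent. The main piece of real work is the equivariance relation~\eqref{eq:Equivariance}. I would unfold $(x \cdot \sigma) \circ_i (y \cdot \nu)$: its $p$th letter is either $x_{\sigma_p}$ for $p \in [n] \setminus \{i\}$ (shifted into its correct slot), or $x_{\sigma_i} \bullet y_{\nu_q}$ for the $m$ positions coming from $y$. I would then expand the right-hand side $(x \circ_{\sigma_i} y) \cdot (\sigma \circ_i \nu)$ using the definition of $\sigma \circ_i \nu$ given in Section~\ref{sec:Operades}: the permutation $\sigma \circ_i \nu$ sends the $m$ positions at the $i$-block to the letters $x_{\sigma_i} \bullet y_{\nu_q}$ of $x \circ_{\sigma_i} y$, and the remaining positions to $x_{\sigma_p}$. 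Matching the two words position by position gives the identity.

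I expect no deep obstacle: this is a computation, and every axiom becomes transparent once the letter-replacement description of $\circ_i$ is written out. The only step that requires any care is~\eqref{eq:Equivariance}, because it forces one to reconcile two different conventions for composing permutations (the one built into $\Permu$ and the one describing the reindexing of letters in $x \circ_i y$). I would handle this by introducing explicit notation for the $m+n-1$ positions of the output and computing the letter at each position from both sides, so that the matching reduces to the definition of $\sigma \circ_i \nu$ recalled earlier in the paper.
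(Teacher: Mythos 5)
Your proposal is correct and follows essentially the same route as the paper: a direct, letter-by-letter verification of \eqref{eq:AssocSerie}, \eqref{eq:AssocParallele}, and \eqref{eq:Unite} reducing to associativity and unitality of $\bullet$, plus a check of the symmetric structure. You are in fact more explicit than the paper on \eqref{eq:Equivariance} (the paper dismisses it in one sentence by observing that $\cdot$ permutes letters), and your position-by-position matching against the definition of $\sigma \circ_i \nu$ is a valid way to make that step rigorous.
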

\begin{proof}
    This is a straightforward checking of the relations of operads:
    \eqref{eq:AssocSerie} comes from the fact that the product of~$M$
    is associative, \eqref{eq:AssocParallele} comes from the fact
    that the elements of~$\T M$ are words, \eqref{eq:Unite} comes from the
    fact that~$M$ has a unit, and~\eqref{eq:Equivariance} comes from the fact
    that~$\cdot$ acts by permuting the letters of the words.
\end{proof}
\medskip

\subsubsection{From monoids morphisms to operads morphisms}
Let~$M$ and~$N$ be two monoids and~$\theta : M \to N$ be a monoid morphism.
Let us denote by~$\T \theta$ the map
\begin{equation}
    \T \theta : \T M \to \T N,
\end{equation}
defined for all $(x_1, \dots, x_n) \in \T M(n)$ by
\begin{equation}
    \T \theta\left(x_1, \dots, x_n\right) :=
    \left(\theta(x_1), \dots, \theta(x_n)\right).
\end{equation}
\medskip

\begin{Proposition} \label{prop:TOperadeMorph}
    If~$M$ and~$N$ are two monoids and~$\theta : M \to N$ is a monoid
    morphism, then the map~$\T \theta : \T M \to \T N$ is an
    operad morphism.
\end{Proposition}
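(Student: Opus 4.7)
The plan is to verify directly the three defining properties of a symmetric operad morphism: preservation of arity (together with the unit), compatibility with the partial compositions~$\circ_i$, and compatibility with the symmetric group action~$\cdot$. Each of these will reduce to the two defining identities of a monoid morphism, namely~$\theta(1_M) = 1_N$ and~$\theta(x \bullet y) = \theta(x) \bullet \theta(y)$.

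First I would observe that, by construction, $\T\theta$ sends an $n$-letter word over~$M$ to an $n$-letter word over~$N$, so it sends $\T M(n)$ to $\T N(n)$. For the unit, one has $\T\theta(\Unite_{\T M}) = \T\theta((1_M)) = (\theta(1_M)) = (1_N) = \Unite_{\T N}$, using that~$\theta$ preserves the monoid unit.

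Next, for the partial compositions, I would fix $x \in \T M(n)$, $y \in \T M(m)$, and $i \in [n]$, then unfold the definitions of $\circ_i$ in~$\T M$ and of~$\T\theta$, and apply~$\theta$ letter by letter. The key step is the equality
\begin{equation}
    \theta(x_i \bullet y_j) = \theta(x_i) \bullet \theta(y_j),
\end{equation}
which is precisely the multiplicativity of~$\theta$; this turns
\begin{equation}
    \T\theta(x \circ_i y) = \bigl(\theta(x_1), \dots, \theta(x_{i-1}), \theta(x_i \bullet y_1), \dots, \theta(x_i \bullet y_m), \theta(x_{i+1}), \dots, \theta(x_n)\bigr)
\end{equation}
into $\T\theta(x) \circ_i \T\theta(y)$. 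Finally, for the symmetric action, if $\sigma \in \Permu(n)$, then
\begin{equation}
    \T\theta(x \cdot \sigma) = \bigl(\theta(x_{\sigma_1}), \dots, \theta(x_{\sigma_n})\bigr) = \T\theta(x) \cdot \sigma,
\end{equation}
since $\cdot$ only permutes positions and does not touch the letters themselves.

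There is no real obstacle here: the result is a routine verification relying only on Proposition~\ref{prop:TOperade} (which guarantees that $\T M$ and $\T N$ are genuinely symmetric operads) and on the two defining identities of a monoid morphism. The mildly bookkeeping-heavy step is writing out the composition check, but once the letter-by-letter expansion is performed, multiplicativity of~$\theta$ closes the argument immediately.
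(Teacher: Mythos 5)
Your proposal is correct and follows essentially the same route as the paper's proof: a letter-by-letter verification that $\T\theta$ commutes with the grafting maps via multiplicativity of~$\theta$, sends the unit $(1_M)$ to $(1_N)$ via $\theta(1_M) = 1_N$, and commutes with the permutation action since that action only rearranges positions. No gaps.
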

\begin{proof}
    This is a straightforward checking: the fact that~$\theta$ is a monoid
    morphism implies the statement of the proposition.
\end{proof}
\medskip

\subsection{Main properties of the construction} \label{subsec:ConstructionProp}

\subsubsection{Functoriality of $\T$}

\begin{Proposition} \label{prop:TInjSur}
    Let~$M$ and~$N$ be two monoids and $\theta : M \to N$ be a monoid
    morphism. If~$\theta$ is injective (resp. surjective), then~$\T \theta$
    is injective (resp. surjective).
\end{Proposition}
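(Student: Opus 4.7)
The plan is to exploit the fact that $\T \theta$ acts component-wise on the letters of a word, so injectivity and surjectivity of $\theta$ can be lifted directly to $\T \theta$ arity by arity. Since $\T \theta$ preserves arity by construction, it suffices to analyze its restriction $\T \theta : \T M(n) \to \T N(n)$ for each fixed $n \geq 1$.

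For the injective case, I would take two words $x = (x_1, \dots, x_n)$ and $y = (y_1, \dots, y_n)$ in $\T M(n)$ satisfying $\T \theta(x) = \T \theta(y)$. By definition of $\T \theta$, this equality amounts to $\theta(x_i) = \theta(y_i)$ for every $i \in [n]$. Injectivity of $\theta$ then yields $x_i = y_i$ for every $i \in [n]$, and hence $x = y$.

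For the surjective case, I would pick an arbitrary word $(y_1, \dots, y_n) \in \T N(n)$. Surjectivity of $\theta$ provides, for each $i \in [n]$, some $x_i \in M$ with $\theta(x_i) = y_i$. Assembling these preimages produces a word $(x_1, \dots, x_n) \in \T M(n)$ whose image under $\T \theta$ is precisely $(y_1, \dots, y_n)$.

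There is no real obstacle here: both parts reduce immediately to coordinate-wise arguments, and no structural property of the operad (composition, unit, or symmetric action) needs to be invoked, since $\T \theta$ has already been shown to be a symmetric operad morphism in Proposition~\ref{prop:TOperadeMorph}. The only small point worth mentioning is that elements of different arities in $\T M$ and $\T N$ belong to disjoint components, so a putative collision or missing preimage across arities cannot occur.
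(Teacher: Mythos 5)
Your proof is correct and follows essentially the same route as the paper: both parts reduce to the coordinate-wise action of $\T \theta$ on letters, with injectivity and surjectivity of $\theta$ lifted directly to each arity component. Nothing is missing.
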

\begin{proof}
    This is a straightforward checking: the fact that~$\T \theta$ acts
    letter by letter implies the statement of the proposition.
\end{proof}
\medskip

\begin{Theoreme} \label{thm:TFonct}
    The construction~$\T$ is a functor from the category of monoids with
    monoid morphisms to the category of operads with operad morphisms.
    Moreover, $\T$ respects injections and surjections.
\end{Theoreme}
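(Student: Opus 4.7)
The plan is to assemble this theorem as an immediate consequence of results already established in the excerpt, supplemented by the two bookkeeping checks that define functoriality. Proposition~\ref{prop:TOperade} shows that $\T M$ is a symmetric operad for every monoid $M$, and Proposition~\ref{prop:TOperadeMorph} shows that $\T\theta$ is a symmetric operad morphism whenever $\theta$ is a monoid morphism. The second sentence of the theorem, about respecting injections and surjections, is exactly the content of Proposition~\ref{prop:TInjSur}. So the only statements left to verify are the two functoriality axioms.

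First, I would check that $\T$ preserves identity morphisms. If $\mathrm{id}_M : M \to M$ is the identity, then for any $(x_1, \dots, x_n) \in \T M(n)$ the definition of $\T\theta$ gives
\begin{equation}
\T(\mathrm{id}_M)(x_1,\dots,x_n) = (\mathrm{id}_M(x_1),\dots,\mathrm{id}_M(x_n)) = (x_1,\dots,x_n),
\end{equation}
so $\T(\mathrm{id}_M) = \mathrm{id}_{\T M}$.

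Next, I would check that $\T$ preserves composition. Let $\theta : M \to N$ and $\theta' : N \to P$ be two monoid morphisms. For any $(x_1,\dots,x_n) \in \T M(n)$, applying the definition componentwise yields
\begin{equation}
\T(\theta'\circ\theta)(x_1,\dots,x_n) = \bigl((\theta'\circ\theta)(x_1),\dots,(\theta'\circ\theta)(x_n)\bigr) = \T\theta'\bigl(\theta(x_1),\dots,\theta(x_n)\bigr) = (\T\theta' \circ \T\theta)(x_1,\dots,x_n),
\end{equation}
so $\T(\theta' \circ \theta) = \T\theta' \circ \T\theta$. Combined with Propositions~\ref{prop:TOperade}, \ref{prop:TOperadeMorph}, and~\ref{prop:TInjSur}, this establishes both the functoriality and the preservation of injections and surjections.

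There is no real obstacle here: since $\T\theta$ acts on a tuple simply by applying $\theta$ letter by letter, functoriality is inherited directly from the corresponding properties of $\theta$ in the category of sets. The proof is essentially a collection of pointers to previously proved propositions plus the two trivial identity and composition verifications above.
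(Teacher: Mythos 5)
Your proposal is correct and follows essentially the same route as the paper: it cites Propositions~\ref{prop:TOperade}, \ref{prop:TOperadeMorph}, and~\ref{prop:TInjSur} for the structural and injection/surjection claims, and then verifies preservation of identities and of composition by the same letter-by-letter computation the paper performs. Nothing is missing.
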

\begin{proof}
    By Proposition~\ref{prop:TOperade},~$\T$ constructs an operad from a
    monoid, and by Proposition~\ref{prop:TOperadeMorph}, an operad morphism
    from a monoid morphism. Now, since $\T$ sends identity monoid morphisms
    to identity operad morphisms and $\T$ commutes with map composition,
    $\T$ is a functor. Finally, by Proposition~\ref{prop:TInjSur},~$\T$ also
    respects injections and surjections, whence the statement of the theorem.
\end{proof}
\medskip

\subsubsection{Miscellaneous properties}

Recall that a monoid~$(M, \bullet)$ is {\em right cancellable} if for any
$x, y, z \in M$, $y \bullet x = z \bullet x$ implies~$y = z$.
\medskip

\begin{Proposition} \label{prop:OperadeBasique}
    Let~$M$ be a monoid. The operad~$\T M$ is basic if and only
    if~$M$ is a right cancellable monoid.
\end{Proposition}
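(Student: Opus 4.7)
The plan is to prove both directions of the equivalence by unraveling what the complete grafting map looks like in $\T M$. If $x = (x_1, \dots, x_n) \in \T M(n)$ and $y_i = (y_{i,1}, \dots, y_{i,m_i}) \in \T M(m_i)$ for $i \in [n]$, then an iterated application of the formula~\eqref{eq:TSub} shows that
\begin{equation}
    \gamma_{y_1, \dots, y_n}(x)
    = (x_1 \bullet y_{1,1}, \dots, x_1 \bullet y_{1,m_1},
       x_2 \bullet y_{2,1}, \dots, x_n \bullet y_{n,m_n}),
\end{equation}
that is, $\gamma_{y_1, \dots, y_n}$ acts on each letter $x_i$ of $x$ by right-multiplying it by every letter of the corresponding $y_i$ and concatenating the resulting blocks. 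This formula is the essential computation underlying both implications.

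For the forward implication, I assume $\T M$ is basic and must show that $y \bullet a = z \bullet a$ implies $y = z$ for all $a, y, z \in M$. I take $n := 1$ and $y_1 := (a) \in \T M(1)$. Then the above formula gives $\gamma_{(a)}\bigl((y)\bigr) = (y \bullet a)$ and $\gamma_{(a)}\bigl((z)\bigr) = (z \bullet a)$, so the hypothesis forces these one-letter words to coincide, and injectivity of $\gamma_{(a)}$ immediately yields $(y) = (z)$, hence $y = z$.

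For the backward implication, I assume $M$ is right regular and suppose that $\gamma_{y_1, \dots, y_n}(x) = \gamma_{y_1, \dots, y_n}(x')$ for some $x, x' \in \T M(n)$ and $y_i \in \T M(m_i)$. Comparing the blocks letter by letter via the formula above gives $x_i \bullet y_{i,j} = x'_i \bullet y_{i,j}$ for every $i \in [n]$ and every $j \in [m_i]$. Since each $m_i \geq 1$, we may fix, for each $i$, any letter $y_{i,1}$ of $y_i$ and apply right regularity of $M$ to conclude $x_i = x'_i$; thus $x = x'$, proving injectivity of $\gamma_{y_1, \dots, y_n}$.

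The only mildly nontrivial step is the explicit formula for $\gamma_{y_1, \dots, y_n}(x)$, which is obtained by induction on $n$ from the definition of the complete grafting as the iterated composition $(\dots((x \circ_n y_n) \circ_{n-1} y_{n-1}) \dots) \circ_1 y_1$ together with~\eqref{eq:TSub}; once this is in hand, both implications reduce to a one-line application of right regularity (or its negation). I do not foresee any genuine obstacle.
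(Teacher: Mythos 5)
Your proof is correct and follows essentially the same route as the paper: right regularity applied letter by letter to the identity $x_i \bullet y^{(i)}_j = x'_i \bullet y^{(i)}_j$ for one direction, and specialization to arity-one words $\gamma_{(a)}$ for the converse. The explicit block formula for $\gamma_{y_1,\dots,y_n}(x)$ that you spell out is only implicit in the paper's argument, but it is the same underlying computation.
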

\begin{proof}
    Let us denote by~$\bullet$ the product of~$M$.
    \smallskip

    Assume first that~$M$ is a right cancellable monoid.
    Let~$y^{(1)}, \dots, y^{(n)} \in \T M$, $x, x' \in \T M(n)$, and
    assume that
    \begin{equation}
        \gamma_{y^{(1)}, \dots, y^{(n)}}(x) =
        \gamma_{y^{(1)}, \dots, y^{(n)}}(x').
    \end{equation}
    Then, for any~$i \in [n]$ and~$j \in \left[\left|y^{(i)}\right|\right]$,
    \begin{equation}
        x_i \bullet y^{(i)}_j = x'_i \bullet y^{(i)}_j.
    \end{equation}
    Since~$M$ is right cancellable,~$x_i = x'_i$ and then,~$x = x'$. This implies
    that~$\gamma_{y^{(1)}, \dots, y^{(n)}}$ is injective and that~$\T M$
    is basic.
    \smallskip

    Conversely, assume now that~$\T M$ is basic. In particular, for
    any~$y \in \T M(1)$, the map~$\gamma_y$ is injective. Hence, for
    any~$x, x' \in \T M(1)$, the equality~$\gamma_y(x) = \gamma_y(x')$
    implies~$x = x'$. This is equivalent to say
    that~$x \bullet y = x' \bullet y$ implies~$x = x'$. This amounts
    exactly to say that~$M$ is a right cancellable monoid.
\end{proof}
\medskip

\begin{Proposition} \label{prop:GenerateursTM}
    Let~$M$ be a monoid generated by a set~$G$. The ns operad~$\T M$
    is generated by the set
    \begin{equation} \label{eq:GensTM}
        \{(g) : g \in G\} \cup \{(1, 1)\},
    \end{equation}
    where~$(1, 1) \in \T M(2)$ and~$1$ is the unit of~$M$.
\end{Proposition}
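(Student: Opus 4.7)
The plan is to show, by explicit construction, that the symmetric suboperad of $\T M$ generated by the set~\eqref{eq:GensTM} contains every word $(x_1, \dots, x_n) \in \T M(n)$. I would proceed in three stages, invoking only the grafting maps (the symmetric group action plays no role).

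First, I would show by induction on $n \geq 1$ that the all-ones word $(\underbrace{1, \dots, 1}_{n}) \in \T M(n)$ lies in the generated suboperad. The base case $n = 1$ is the unit $\Unite$ of $\T M$, and the inductive step follows from
\begin{equation}
    (\underbrace{1, \dots, 1}_{n}) \circ_1 (1, 1) = (\underbrace{1, \dots, 1}_{n + 1}),
\end{equation}
which is immediate from the formula~\eqref{eq:TSub} for $\circ_1$. Next, I would observe that every singleton word $(x) \in \T M(1)$ also lies in the generated suboperad: since $G$ generates $M$ as a monoid, we can write $x = g_1 \bullet \cdots \bullet g_k$ with $g_1, \dots, g_k \in G$ (the empty product being $1$, for which $(x) = \Unite$), so that iterated grafting at position~$1$ gives
\begin{equation}
    (\cdots ((g_1) \circ_1 (g_2)) \circ_1 \cdots) \circ_1 (g_k) = (g_1 \bullet \cdots \bullet g_k) = (x),
\end{equation}
again by~\eqref{eq:TSub}.

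Finally, for an arbitrary $(x_1, \dots, x_n) \in \T M(n)$, I would start from $(\underbrace{1, \dots, 1}_{n})$, which is in the generated suboperad by the first step, and perform $n$ successive graftings of the singleton words $(x_1), \dots, (x_n)$, each in the generated suboperad by the second step:
\begin{equation}
    (\cdots (((\underbrace{1, \dots, 1}_{n}) \circ_1 (x_1)) \circ_2 (x_2)) \cdots) \circ_n (x_n) = (x_1, x_2, \dots, x_n).
\end{equation}
Indeed, grafting $(x_i)$ at position~$i$ replaces the $i$th letter of the current word by its product with $x_i$, and that letter is $1$ at the step it is substituted. There is no real obstacle here; the only points worth recording are that $(1, 1)$ provides arbitrary arity through iterated grafting, while the $(g)$ for $g \in G$ suffice to realize every element of $M$ at a single position, and that the argument in fact shows that \eqref{eq:GensTM} generates $\T M$ already as a non-symmetric operad.
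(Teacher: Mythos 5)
Your proof is correct and follows essentially the same route as the paper's: first build the all-ones word of arity $n$ by iterated grafting of $(1,1)$, then adjust each letter by grafting generators of $M$ at that position. You merely write out the paper's two-sentence sketch in full detail (factoring the second step through the singletons $(x)$), and your closing observation that the argument already works in the non-symmetric setting is consistent with the paper's proof, which likewise never uses the symmetric group action.
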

\begin{proof}
    Any element~$x := (x_1, \dots, x_n)$ of~$\T M$ can be generated by the
    elements of~\eqref{eq:GensTM} in the following way. First, generate
    the element~$y := (1, \dots, 1)$ of arity~$n$ by composing~$(1, 1)$ with
    itself~$n - 1$ times. Next, change each letter~$y_i$ of~$y$ by composing $y$
    with a sequence of generators of~$G$ to reach~$x_i$. This is possible
    since~$M$ is generated by~$G$.
\end{proof}
\medskip

\begin{Theoreme} \label{thm:algebre_sur_TM_ns}
    Let $(M, \bullet)$ be a monoid generated by a set $G := \{g_1, g_2, \dots\}$
    of generators satisfying a set $R$ of nontrivial relations. Then, any
    algebra $S$ over the ns operad $\T M$ is a set equipped with maps
    \begin{equation} \label{equ:gen_TM_1}
        \star : S \times S \to S
    \end{equation}
    and
    \begin{equation} \label{equ:gen_TM_2}
        \uparrow_g : S \to S, \qquad g \in G
    \end{equation}
    satisfying for all~$a, b, c \in S$, $g \in G$, and all relations
    $g_{i_1} \bullet \dots \bullet g_{i_n} = g_{j_1} \bullet \dots \bullet g_{j_m}$
    of~$R$, the equalities
    \begin{equation} \label{equ:relation_TM_1}
        (a \star b) \star c = a \star (b \star c),
    \end{equation}
    \begin{equation} \label{equ:relation_TM_2}
        (a \star b) \uparrow_g = a \uparrow_g \; \star \; b \uparrow_g,
    \end{equation}
    \begin{equation} \label{equ:relation_TM_3}
        a \uparrow_{g_{i_1}} \dots \; \uparrow_{g_{i_n}} =
        a \uparrow_{g_{j_1}} \dots \; \uparrow_{g_{j_m}}.
    \end{equation}
\end{Theoreme}
\begin{proof}
    Proving the statement of the theorem is equivalent to prove that the
    ns operad~$\T M$ admits the presentation by generators and relations
    obtained by traducing \eqref{equ:gen_TM_1}, \eqref{equ:gen_TM_2},
    \eqref{equ:relation_TM_1}, \eqref{equ:relation_TM_2},
    and \eqref{equ:relation_TM_3} in operadic terms. Thereby, this ns
    operad~$\CalP$ is the quotient of the free operad generated by a
    binary generator~$\star$ and unary generators~$\uparrow_g$, $g \in G$,
    submitted to the relations
    \begin{equation}
        \star \circ_1 \star = \star \circ_2 \star,
    \end{equation}
    \begin{equation}
        \uparrow_g \circ_1 \star = \star \circ [\uparrow_g, \uparrow_g],
        \qquad g \in G,
    \end{equation}
    \begin{equation}
        \uparrow_{g_{i_1}} \circ_1 \dots \circ_1 \uparrow_{g_{i_n}} =
        \uparrow_{g_{j_1}} \circ_1 \dots \circ_1 \uparrow_{g_{j_m}}
    \end{equation}
    for all relations
    $g_{i_1} \bullet \dots \bullet g_{i_n} = g_{j_1} \bullet \dots \bullet g_{j_m}$
    of~$R$.
    \smallskip

    Let~$\phi : \CalP \to \T M$ be ns operad morphism defined
    by~$\phi(\star) := (1, 1)$ and~$\phi(\uparrow_g) := (g)$ for any~$g \in G$,
    where~$1$ denotes the unit of~$M$. This morphism is well-defined since
    the elements~$(1, 1)$ and~$(g)$ of~$\T M$ satisfy the above relations
    by replacing~$\star$ by~$(1, 1)$ and~$\uparrow_g$ by~$(g)$.
    Proposition~\ref{prop:GenerateursTM} implies that~$\phi$ is surjective
    since, as a ns operad,~$\T M$ is generated by~$(1, 1)$ and~$(g)$,
    $g \in G$.
    \smallskip

    Now, since the equivalence classes of~$\CalP$ are clearly in bijection
    with the elements of~$\T M$, this shows that~$\phi$ is an isomorphism.
\end{proof}
\medskip

\section{Constructing operads} \label{sec:ConstructionOperades}
Through this section, we consider examples of applications of the functor~$\T$.
We shall mainly consider, given a monoid~$M$, some suboperads of~$\T M$,
symmetric or not, which have for all~$n \geq 1$ finitely many elements of
arity~$n$.
\medskip

For the most part of the constructed operads~$\CalP$, we shall establish
for all arities~$n \geq 1$, bijections $\phi : \CalP(n) \to \CalC_n$ between
the elements of~$\CalP$ of arity~$n$ and elements of size~$n$ of a
set~$\CalC := \sqcup_{n \geq 1} \CalC_n$ of combinatorial objects.
These bijections, in addition to show that~$\CalP$ are operads involving
the objects of~$\CalC$, allow us to define composition operations on~$\CalC$ by
interpreting the partial composition maps of~$\CalP$ on the elements of~$\CalC$.
\medskip

Moreover, we shall also establish presentations by generators and relations
of the constructed ns operads by using the tools provided by
Section~\ref{sec:Operades}.
\medskip

\subsection{Operads from the additive monoid} \label{subsec:MonoideAdditif}
We shall denote by~$\EnsNat$ the additive monoid of integers, and for
all~$\ell \geq 1$, by~$\EnsNat_\ell$ the quotient of~$\EnsNat$ consisting
in the set $\{0, 1, \dots, \ell - 1\}$ with the addition modulo $\ell$ as
the operation of $\EnsNat_\ell$.
\medskip

Note that since, by Theorem~\ref{thm:TFonct},~$\T$ is a functor which
respects surjective maps, $\T \EnsNat_\ell$ is a quotient operad of~$\T \EnsNat$.
Besides, since the monoids~$\EnsNat$ and~$\EnsNat_\ell$ are right cancellable,
by Proposition~\ref{prop:OperadeBasique}, the operads~$\T \EnsNat$
and~$\T \EnsNat_\ell$ are basic, and since any suboperad of a basic operad
is basic, all operads constructed in this section are basic.
\medskip

The ns operads constructed in this section fit into the diagram
of ns operads represented by Figure~\ref{fig:DiagrammeOperades}.
Table~\ref{tab:Operades} summarizes some information about these ns operads.
\begin{figure}[ht]
    \centering
    \begin{tikzpicture}[scale=.55]
        \node(TN)at(2,0){$\T \EnsNat$};
        \node(TN2)at(-4,-2){$\T \EnsNat_2$};
        \node(TN3)at(8,-2){$\T \EnsNat_3$};
        \node(End)at(-.5,-2){$\End$};
        \node(FP)at(-1.25,-4){$\FP$};
        \node(MT)at(-1.25,-6){$\MT$};
        \node(Per)at(-2,-8){$\Per$};
        \node(Schr)at(-.5,-8){$\Schr$};
        \node(FCat1)at(2,-10){$\FCat{1}$};
        \node(FCat2)at(2,-8){$\FCat{2}$};
        \node(FCat3)at(2,-6){$\FCat{3}$};
        \node(SComp)at(8,-10){$\SComp$};
        \node(AnD)at(8,-12){$\AnD$};
        \node(APE)at(5,-12){$\APE$};
        \node(Motz)at(-1,-12){$\Motz$};
        \node(Comp)at(-4,-12){$\Comp$};
        \node(FCat0)at(2,-14){$\FCat{0}$};
        \draw[Surjection](TN)--(TN2);
        \draw[Surjection](TN)--(TN3);
        \draw[Injection](End)--(TN);
        \draw[Injection](FP)--(End);
        \draw[Injection](MT)--(FP);
        \draw[Surjection](MT)--(Per);
        \draw[Injection](Schr)--(MT);
        \draw[Injection](FCat1)--(Schr);
        \draw[Injection](FCat1)--(FCat2);
        \draw[Injection](FCat2)--(FCat3);
        \draw[Injection,dashed](FCat3)--(TN);
        \draw[Surjection](FCat2)--(SComp);
        \draw[Injection](SComp)--(TN3);
        \draw[Surjection](FCat1)--(AnD);
        \draw[Injection](AnD)--(SComp);
        \draw[Injection](APE)--(FCat1);
        \draw[Injection](Motz)--(FCat1);
        \draw[Surjection](FCat1)--(Comp);
        \draw[Injection](Comp)--(TN2);
        \draw[Surjection](Comp)--(FCat0);
        \draw[Surjection](APE)--(FCat0);
        \draw[Surjection](AnD)--(FCat0);
        \draw[Injection](FCat0)--(Motz);
    \end{tikzpicture}
    \caption{The diagram of ns suboperads and quotients of~$\T \EnsNat$.
    Arrows~$\rightarrowtail$ (resp.~$\twoheadrightarrow$) are injective
    (resp. surjective) ns operad morphisms.}
    \label{fig:DiagrammeOperades}
\end{figure}
\begin{table}[ht]
    \centering
    \begin{tabular}{c|c|c|c|c}
        Monoid & Ns operad & Generators & First dimensions
            & Combinatorial objects \\ \hline \hline
        \multirow{8}{*}{$\EnsNat$} & $\End$ & --- & $1, 4, 27, 256, 3125$
            & Endofunctions \\
        & $\FP$ & --- & $1, 3, 16, 125, 1296$ & Parking functions \\
        & $\MT$ & --- & $1, 3, 13, 75, 541$ & Packed words \\
        & $\Per$ & --- & $1, 2, 6, 24, 120$ & Permutations \\
        & $\APE$ & $01$  & $1, 1, 2, 5, 14, 42$ & Planar rooted trees \\
        & $\FCat{k}$ & $00$, $01$, \dots, $0k$ & Fu\ss-Catalan numbers
            & $k$-leafy trees \\
        & $\Schr$ & $00$, $01$, $10$ & $1, 3, 11, 45, 197$ & Schröder trees \\
        & $\Motz$ & $00$, $010$ & $1, 1, 2, 4, 9, 21, 51$
            & Motzkin words \\ \hline
        $\EnsNat_2$ & $\Comp$ & $00$, $01$ & $1, 2, 4, 8, 16, 32$
            & Int. compo. \\ \hline
        \multirow{2}{*}{$\EnsNat_3$} & $\AnD$ & $00$, $01$
            & $1, 2, 5, 13, 35, 96$ & Directed animals \\
        & $\SComp$ & $00$, $01$, $02$ & $1, 3, 27, 81, 243$
            & Seg. int. compo.
    \end{tabular} \vspace{.5em}
    \caption{Ground monoids, generators, first dimensions, and combinatorial
    objects involved in the ns suboperads and quotients of~$\T \EnsNat$.}
    \label{tab:Operades}
\end{table}
\medskip

\subsubsection{Operads on endofunctions, parking functions, packed words,
and permutations}
Recall that an {\em endofunction} of size~$n$ is a word $x$ of length~$n$
on the alphabet~$\{1, \dots, n\}$. A {\em parking function} of size~$n$ is
an endofunction~$x$ of size~$n$ such that the nondecreasing rearrangement~$y$
of~$x$ satisfies~$y_i \leq i$ for all~$i \in [n]$. A {\em packed word}
of size~$n$ is an endofunction~$x$ of size~$n$ such that for any
letter~$x_i \geq 2$ of~$x$, there is in~$x$ a letter~$x_j = x_i - 1$.
\medskip

Note that neither the set of endofunctions nor the set of parking functions,
packed words, and permutations are suboperads of~$\T \EnsNat$. Indeed, one
has the following counterexample:
\begin{equation}
    \textcolor{Bleu}{1}{\bf 2} \circ_2 \textcolor{Rouge}{12} =
    \textcolor{Bleu}{1}\textcolor{Rouge}{34},
\end{equation}
and, even if~$12$ is a permutation,~$134$ is not an endofunction.
\medskip

Therefore, let us call a word~$x$ a {\em twisted} endofunction (resp.
parking function, packed word, permutation) if the word
$(x_1 + 1, x_2 + 1, \dots, x_n + 1)$ is an endofunction (resp. parking
function, packed word, permutation). For example, the word~$2300$ is a
twisted endofunction since~$3411$ is an endofunction. Let us denote by~$\End$
(resp.~$\FP$, $\MT$, $\Per$) the set of twisted endofunctions (resp. parking
functions, packed words, permutations). Under this reformulation, one has
the following result:
\begin{Proposition} \label{prop:OpEndFPMT}
    The sets~$\End$, $\FP$, and~$\MT$ form suboperads of~$\T \EnsNat$.
\end{Proposition}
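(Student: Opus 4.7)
The plan is to verify, for each of the three sets, the three defining conditions of a symmetric suboperad of $\T \EnsNat$: (i) it contains the unit $\Unite = (0)$; (ii) it is closed under the $\Permu$-action $\cdot$; (iii) it is closed under each partial composition $\circ_i$. Conditions (i) and (ii) will be immediate in all three cases: $(0)$ is trivially a twisted endofunction, twisted parking function, and twisted packed word of arity $1$; and since permuting letters preserves the maximum value, the multiset of letters, and the nondecreasing rearrangement, the action stabilises each set. All the real work goes into (iii), which I plan to treat separately for each family.

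For $\End$, I would argue directly. Given $x \in \End(n)$ and $y \in \End(m)$, the composition $x \circ_i y$ has arity $n+m-1$, and its letters are either letters $x_k$ (with $k \ne i$) bounded by $n-1 \le n+m-2$, or shifted letters $x_i \bullet y_j = x_i + y_j$ bounded by $(n-1)+(m-1) = n+m-2$. Hence every letter lies in $\{0, 1, \dots, n+m-2\}$, so $x \circ_i y \in \End(n+m-1)$.

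For $\MT$, I would first reformulate the definition: $w$ of length $\ell$ belongs to $\MT(\ell)$ if and only if the set of its letters is $\{0, 1, \dots, M\}$ for some $M \ge 0$. Since $y \in \MT(m)$ forces $0$ to appear in $y$, the letter $x_i = x_i + 0$ still appears in $x \circ_i y$. Therefore the set of letters of $w := x \circ_i y$ equals the union of the set of letters of $x$, say $\{0, 1, \dots, M_x\}$, with the shifted set $\{x_i, x_i+1, \dots, x_i + M_y\}$ arising from $y$. Because $0 \le x_i \le M_x$, this union is the contiguous interval $\{0, 1, \dots, \max(M_x, x_i + M_y)\}$, so $w \in \MT(n+m-1)$.

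For $\FP$, which I expect to be the main obstacle, I would replace the defining condition by the equivalent characterisation that $w$ of length $\ell$ belongs to $\FP(\ell)$ if and only if, for every $v \in \{0, 1, \dots, \ell-1\}$, at least $v+1$ letters of $w$ are $\le v$. Given $x \in \FP(n)$, $y \in \FP(m)$, and $v \in \{0, \dots, n+m-2\}$, I would count the letters of $w := x \circ_i y$ that are $\le v$ by splitting them into the $n-1$ letters coming from $x$ at positions $\ne i$, and the $m$ shifted letters $x_i + y_j$. A short case analysis on whether $v < x_i$ or $v \ge x_i$, and on whether $v \le n-1$ and $v - x_i \le m-1$, combines the parking inequalities of $x$ and $y$ to give at least $v+1$ letters $\le v$ in every case; the delicate subcases (where $v$ exceeds the natural range of $x$ or of the shifted $y$) use that all letters of $x$ are $\le n-1$ and all letters of $y$ are $\le m-1$. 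With this bookkeeping done uniformly across the subcases, $w$ satisfies the characterisation, so $w \in \FP(n+m-1)$, finishing the proof.
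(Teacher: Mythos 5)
Your proof is correct, but it takes a genuinely different route from the paper's. The paper gives a single uniform argument for all three families: using that~$y$ contains a letter~$0$, it records the identity $\Alphab(x \circ_i y) = \Alphab(x) \cup \{x_i + a : a \in \Alphab(y)\}$ and asserts that this, together with stability under permutations, gives closure. That alphabet computation really does settle~$\End$ (one only needs every letter bounded by $n+m-2$, which is your argument too) and~$\MT$ (your reformulation of twisted packed words as words whose alphabet is an initial interval $\{0,\dots,M\}$ is exactly what makes the union of $\{0,\dots,M_x\}$ and $\{x_i,\dots,x_i+M_y\}$ with $x_i \le M_x$ work). For~$\FP$, however, the parking condition depends on letter multiplicities and not only on the alphabet, so the paper's one-line justification is essentially a sketch there; your counting argument via the characterisation ``at least $v+1$ letters are $\le v$ for every $v$'' is the substantive addition. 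Splitting the letters of $x \circ_i y$ into the $n-1$ unshifted letters of~$x$ and the $m$ shifted letters $x_i + y_j$, and combining the two parking inequalities with the bounds $x_k \le n-1$ and $y_j \le m-1$, does go through in every regime (the extreme cases where $v \ge n$ or $v - x_i \ge m$ are covered because the corresponding count saturates at $n$ or $m$, and $v \le n+m-2$). In short: your version costs a case analysis but is complete where the paper is terse; the paper's version buys uniformity across the three families at the price of glossing over the multiplicity issue for parking functions.
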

\begin{proof}
    First, by definition of the partial composition map of~$\T \EnsNat$, the set
    of twisted endofunctions forms a suboperad of~$\T \EnsNat$.
    \smallskip

    Let~$x$ and~$y$ be two twisted parking functions (resp. packed words)
    and~$i \in [|x|]$. Since~$x$ and~$y$ have by definition at least one
    occurrence of~$0$, we have in~$\T \EnsNat$,
    \begin{equation}
        \Alphab(x \circ_i y) =
        \Alphab(x) \cup \{x_i + a : a \in \Alphab(y)\},
    \end{equation}
    where~$\Alphab(u)$ is the set~$\{u_j : j \in [|u|]\}$.
    This, in addition to the fact that any permutation of a twisted
    parking function (resp. packed word) is still a twisted parking functions
    (resp. packed word), shows that the partial composition maps of~$\T \EnsNat$
    and the map~$\cdot$ are still well-defined in~$\FP$ (resp. $\MT$).
\end{proof}
\medskip

For example, we have in~$\End$ the following composition
\begin{equation}
    \textcolor{Bleu}{2}{\bf 1} \textcolor{Bleu}{23}
    \circ_2
    \textcolor{Rouge}{30313} =
    \textcolor{Bleu}{2}\textcolor{Rouge}{41424}\textcolor{Bleu}{23},
\end{equation}
and the following application of the map~$\cdot$
\begin{equation}
    11210 \cdot 23514 = 12011.
\end{equation}

Note that~$\End$ is not a finitely generated operad. Indeed,
the twisted endofunctions~$x$ of size~$n$ satisfying~$x_i := n - 1$ for
all~$i \in [n]$ cannot be obtained by compositions involving elements
of~$\End$ of arity smaller than~$n$. Similarly,~$\FP$ is not a finitely
generated operad since the twisted parking functions~$x$ of
size~$n$ satisfying~$x_i := 0$ for all~$i \in [n - 1]$ and~$x_n := n - 1$
cannot be obtained by compositions involving elements of~$\FP$ of arity
smaller than~$n$.
\medskip

However, the operad~$\MT$ is a finitely generated operad:
\begin{Proposition}  \label{prop:GenerationMT}
    The operad~$\MT$ is the suboperad of~$\T \EnsNat$ generated
    by the elements~$00$ and~$01$.
\end{Proposition}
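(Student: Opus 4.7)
The plan is to prove the two inclusions. That $\langle 00, 01 \rangle \subseteq \MT$ is immediate: the words $00$ and $01$ are themselves twisted packed words (their alphabets $\{0\}$ and $\{0, 1\}$ are initial segments of $\EnsNat$), and by Proposition~\ref{prop:OpEndFPMT}, $\MT$ is a symmetric suboperad of $\T \EnsNat$, hence closed under the grafting maps and under $\cdot$. The content of the statement is the reverse inclusion $\MT \subseteq \langle 00, 01 \rangle$, which I would prove by induction on the arity $n$ of $x \in \MT(n)$.

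The base case $n = 1$ is trivial: the only twisted packed word of arity $1$ is the unit $(0)$. For the inductive step at $n \geq 2$, the crux is a \emph{reduction move}: delete one well-chosen letter of $x$ to obtain a twisted packed word $y \in \MT(n-1)$, then recover $x$ as $(y \circ_j g) \cdot \sigma$ for some $g \in \{00, 01\}$ and $\sigma \in \Permu(n)$. Two cases, exhausting all $x$, must be handled. First, if some letter $c$ of $x$ is repeated, say at positions $i < i'$, I would delete position $i'$; since the copy at position $i$ survives, $\Alphab(y) = \Alphab(x)$ remains an initial segment of $\EnsNat$, so $y \in \MT(n-1)$. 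Grafting $00$ at a position of $y$ labeled $c$ reinserts an adjacent copy of $c$, and a permutation shifts that inserted copy into position $i'$. Second, if all letters of $x$ are distinct, then $x$ is a twisted permutation with $\Alphab(x) = \{0, 1, \dots, n-1\}$; deleting the unique position carrying $n - 1$ yields $y \in \MT(n-1)$ with alphabet $\{0, 1, \dots, n-2\}$, and grafting $01$ at the position of $y$ labeled $n - 2$ produces a letter $n - 1$ adjacent to it, which a permutation sends to its correct location in $x$.

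By the inductive hypothesis $y \in \langle 00, 01 \rangle$, and closure under grafting and under the symmetric action then yields $x \in \langle 00, 01 \rangle$, closing the induction. The main subtlety is verifying that $y$ remains in $\MT$ after the deletion; this is exactly why the case split is tailored as above, so that either the alphabet is preserved (a repeated letter is removed) or only its top element disappears (the maximum letter is removed), keeping $\Alphab(y)$ an initial segment of $\EnsNat$. The existence of the required $\sigma$ follows in each case from the fact that $y \circ_j g$ and $x$ share the same multiset of letters by construction, so one is a rearrangement of the other.
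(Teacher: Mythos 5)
Your proof is correct, and it follows the same overall strategy as the paper's: the easy inclusion via Proposition~\ref{prop:OpEndFPMT}, and the reverse inclusion by induction on arity, realizing each twisted packed word as a graft of $00$ or $01$ onto a smaller one with the symmetric action used to repair positions. The difference is in how the induction is organized. The paper applies the action $\cdot$ \emph{once}, reducing to a nondecreasing twisted packed word $x$, for which the length-$(n-1)$ prefix $y$ is automatically again a (nondecreasing) twisted packed word and $x = y \circ_{n-1} 00$ or $x = y \circ_{n-1} 01$ according to whether $x_n = x_{n-1}$ or $x_n = x_{n-1} + 1$; no case analysis on the structure of $x$ is needed. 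You instead keep $x$ arbitrary, apply a permutation at every inductive step, and split into the cases ``some letter repeats'' (delete a duplicate, graft $00$) and ``all letters distinct'' (delete the unique maximum of a twisted permutation, graft $01$); the price is that you must verify separately in each case that the deletion leaves the alphabet an initial segment of $\EnsNat$, which you do, and which is exactly the point your case split is designed to guarantee. Both arguments are complete; the paper's normalization just confines the symmetric action to a single final step and makes the ``still a twisted packed word'' check automatic.
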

\begin{proof}
    Let~$\CalP$ be the suboperad of~$\T \EnsNat$ generated by
    the elements~$00$ and~$01$, and let us show that~$\CalP = \MT$.
    \smallskip

    First, by Proposition~\ref{prop:OpEndFPMT}, since~$00$ and~$01$ are
    twisted packed words, the elements of~$\CalP$ also are twisted packed
    words.
    \smallskip

    Now let~$x$ be a nondecreasing twisted packed word and let us show by
    induction on the size of~$x$ that~$x \in \CalP$. If~$|x| = 1$,
    since~$x$ is a twisted packed word, one has~$x = 0$ and since~$0$ is
    the unit of~$\T \EnsNat$, $x \in \CalP$. Otherwise, let~$y$ be the
    prefix of size~$n - 1$ of~$x$. Since~$x$ is a nondecreasing word,
    there are two possibilities to express the last letter~$x_n$ of~$x$
    from the letter~$x_{n - 1}$. If $x_n = x_{n - 1}$, we have
    $x = y \circ_{n - 1} 00$, and if $x_n = x_{n - 1} + 1$, we have
    $x = y \circ_{n - 1} 01$. Hence, since by induction hypothesis~$\CalP$
    contains~$y$, $\CalP$ also contains~$x$. Finally, since any twisted
    packed word~$z$ can be obtained from a nondecreasing packed word~$x$
    by permuting its letters, we have~$z = x \cdot \sigma$ for a certain
    permutation~$\sigma$ of~$\Permu(n)$, and hence,~$\CalP = \MT$.
\end{proof}
\medskip

Let~$\K$ be a field and let us from now consider that~$\MT$ is an
operad in the category of $\K$-vector spaces, {\em i.e.},~$\MT$ is the
free $\K$-vector space over the set of twisted packed words with partial
composition maps and the map~$\cdot$ extended by linearity. For more details on
operads in the category of vector spaces, we redirect the reader
to~\cite{LV12}.
\medskip

Let~$I$ be the free $\K$-vector space over the set of twisted packed words
having multiple occurrences of a same letter.
\medskip

\begin{Proposition} \label{prop:IdealDeMT}
    The vector space~$I$ is an operadic ideal of~$\MT$. Moreover, the
    operadic quotient~$\MT/_I$ is the free vector space over the set of
    twisted permutations~$\Per$ and, for all twisted permutations~$x$
    and~$y$, the partial composition map in~$\Per$ is expressed as
    \begin{equation} \label{eq:SubsPartiellePer}
        x \circ_i y =
        \begin{cases}
            x \circ_i y & \mbox{if $x_i = |x|$,} \\
            0_\K         & \mbox{otherwise,}
        \end{cases}
    \end{equation}
    where~$0_\K$ is the null vector of~$\Per$ and the partial composition
    map~$\circ_i$ in the right member of~\eqref{eq:SubsPartiellePer} is the
    partial composition map of~$\MT$.
\end{Proposition}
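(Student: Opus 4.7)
The plan is to split the statement into three parts: (1) verify that $I$ is closed under all operadic operations; (2) identify a basis of $\MT/_I$ with twisted permutations; (3) derive the composition formula from the one in $\MT$ by detecting when a repetition appears.

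For part (1), stability of $I$ under the symmetric group action is immediate since $x \cdot \sigma$ and $x$ have the same multiset of letters. Stability under grafts is checked on basis elements. Let $x$ and $y$ be twisted packed words of arities $n$ and $m$, and let $i \in [n]$. If $y \in I$, pick $j \neq k$ with $y_j = y_k$; then the letters $x_i \bullet y_j$ and $x_i \bullet y_k$ at positions $i+j-1$ and $i+k-1$ of $x \circ_i y$ coincide, so $x \circ_i y \in I$. If $x \in I$, fix $j \neq k$ with $x_j = x_k$ and split cases. When $i \notin \{j,k\}$, both letters $x_j$ and $x_k$ survive into $x \circ_i y$ and give a repetition. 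When $i = j$ (the case $i = k$ is symmetric), I use the key observation that every twisted packed word contains the letter $0$: indeed, by the definition of packed word, the minimum letter of $y$ shifted by one must equal $1$. Hence some $y_l = 0$, so $x_i + y_l = x_i = x_k$, and $x_k$ still appears in $x \circ_i y$, yielding a repetition again. This case analysis is the only real subtle point of the proof.

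For part (2), I use the decomposition of the basis of $\MT$ into those twisted packed words with a repeated letter (spanning $I$) and those with pairwise distinct letters. A twisted packed word of arity $n$ with distinct letters must use $n$ distinct values from the set of values its shifted version uses; since shifted packed words use a set of the form $\{1, 2, \dots, k\}$ of consecutive integers starting from $1$, we conclude $k = n$ and the word is a permutation of $\{0, 1, \dots, n-1\}$, i.e.\ a twisted permutation. Conversely every twisted permutation is a twisted packed word with distinct letters, so the complementary basis is exactly $\Per$, and the quotient $\MT/_I$ is canonically identified with the free $\K$-vector space over twisted permutations.

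For part (3), I take twisted permutations $x$ of arity $n$ and $y$ of arity $m$ and compute $x \circ_i y$ in $\MT$. Since $\{y_1, \dots, y_m\} = \{0, 1, \dots, m-1\}$, the letters introduced at positions $i, \dots, i+m-1$ form the set $\{x_i, x_i+1, \dots, x_i+m-1\}$, while the remaining old letters form $\{0, 1, \dots, n-1\} \setminus \{x_i\}$. The graft is a twisted permutation (equivalently, not in $I$) if and only if these two sets are disjoint, which amounts to
\begin{equation*}
\{x_i + 1, \dots, x_i + m - 1\} \cap \{0, 1, \dots, n-1\} = \emptyset,
\end{equation*}
that is, $x_i$ is the maximum letter of $x$. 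In that case no cancellation occurs and the composition in $\MT/_I$ is just the composition in $\MT$; otherwise $x \circ_i y$ lies in $I$ and its class is $0_\K$. This yields the claimed formula.
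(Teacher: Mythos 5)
Your proof is correct and follows essentially the same route as the paper's: both rest on the observation that every twisted packed word contains a~$0$ (so repeated letters persist under grafting on either side), on identifying the repeat-free twisted packed words with twisted permutations, and on the fact that a graft of twisted permutations avoids repeated letters exactly when~$x_i$ is the greatest letter of~$x$. Your explicit computation in part~(3) moreover confirms that the condition in~\eqref{eq:SubsPartiellePer} should be read as ``$x_i$ is the greatest letter of~$x$'', that is~$x_i = |x| - 1$, in agreement with the paper's own proof and examples rather than with the literal ``$x_i = |x|$'' of the displayed formula.
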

\begin{proof}
    Let~$x$ be a twisted packed word and~$y$ be a twisted packed word having
    multiple occurrences of a same letter. Since~$x$ and~$y$ have at least
    one occurrence of~$0$, any composition involving~$x$ and~$y$ also has
    multiple occurrences of a same letter. Moreover, for any permutation~$\sigma$
    of~$\Permu$ of size~$|y|$, $y \cdot \sigma$ also has multiple occurrences
    of a same letter. Hence,~$I$ is an operadic ideal of~$\MT$ and one
    can consider the operadic quotient~$\MT/_I$.
    \smallskip

    Since twisted packed words with no multiple occurrence of a same letter
    are twisted permutations,~$\MT/_I$ can be identified with the
    $\K$-vector space over the set of twisted permutations~$\Per$
    and~\eqref{eq:SubsPartiellePer} follows from the fact that the
    composition~$x \circ_i y$ of two twisted permutations~$x$ and~$y$ is
    still a twisted permutation if and only if~$x_i$ is the greatest letter
    of~$x$.
\end{proof}
\medskip

Here are two examples of compositions in~$\Per$
\begin{equation}
    {\bf 2}\textcolor{Bleu}{0431} \circ_1 \textcolor{Rouge}{102} = 0_\K,
\end{equation}
\begin{equation}
    \textcolor{Bleu}{20}{\bf 4}\textcolor{Bleu}{31} \circ_3 \textcolor{Rouge}{102}
        = \textcolor{Bleu}{20}\textcolor{Rouge}{546}\textcolor{Bleu}{31}.
\end{equation}
\medskip

\subsubsection{A ns operad on planar rooted trees} \label{subsubsec:APE}
Let~$\APE$ be the ns suboperad of~$\T \EnsNat$ generated by~$01$.
The following table shows the first elements of~$\APE$.
\begin{center}
    \begin{tabular}{c|p{11cm}}
        Arity & Elements of~$\APE$ \\ \hline \hline
        $1$ & $0$ \\ \hline
        $2$ & $01$ \\ \hline
        $3$ & $011$, $012$ \\ \hline
        $4$ & $0111$, $0112$, $0121$, $0122$, $0123$ \\ \hline
        $5$ & $01111$, $01112$, $01121$, $01122$, $01123$, $01211$,
              $01212$, $01221$, $01222$, $01223$, $01231$, $01232$,
              $01233$, $01234$
    \end{tabular}
\end{center}
\medskip

One has the following characterization of the elements of~$\APE$:
\begin{Proposition} \label{prop:MotsAPE}
    The elements of~$\APE$ are exactly the words~$x$ on the alphabet~$\EnsNat$
    satisfying~$x_1 = 0$ and $1 \leq x_{i + 1} \leq x_i + 1$ for
    all~$i \in [|x| - 1]$.
\end{Proposition}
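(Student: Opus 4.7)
The plan is to prove Proposition~\ref{prop:MotsAPE} by establishing the double inclusion between $\APE$ and the set $S$ of words on $\EnsNat$ satisfying $x_1 = 0$ and $1 \leq x_{i+1} \leq x_i + 1$ for all $i \in [|x|-1]$.

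For the forward inclusion $\APE \subseteq S$, I would argue by induction on arity. The unit $0$ of $\APE$ lies in $S$ vacuously. By Lemma~\ref{lem:GenerationOpNSEns}, any other element of $\APE$ can be written as $y \circ_i 01$ with $y \in \APE$ of strictly smaller arity; unfolding the grafting formula in $\T \EnsNat$, this word equals $(y_1, \ldots, y_{i-1}, y_i, y_i+1, y_{i+1}, \ldots, y_{|y|})$. Assuming inductively $y \in S$, the only adjacencies to check are those involving the inserted letter $y_i + 1$: at positions $(i, i+1)$ one has $1 \leq y_i + 1 \leq y_i + 1$, and (when $i < |y|$) at positions $(i+1, i+2)$ one has $1 \leq y_{i+1} \leq y_i + 1 \leq (y_i + 1) + 1$, both following from $y \in S$. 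Every other adjacency is inherited from $y$, so the new word belongs to $S$.

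For the reverse inclusion $S \subseteq \APE$, I would induct on the arity $n$ of $x \in S$. The base case $n = 1$ forces $x = 0$, the unit of $\APE$. For $n \geq 2$, the strategy is to exhibit an index $j \in [n-1]$ and a word $y \in S$ of arity $n-1$ with $x = y \circ_j 01$; the inductive hypothesis then gives $y \in \APE$, hence $x \in \APE$. Inverting the grafting formula shows that such a factorization requires $x_{j+1} = x_j + 1$ and that the only non-trivial constraint to verify for $y := (x_1, \ldots, x_j, x_{j+2}, \ldots, x_n)$ is the new adjacency at positions $j, j+1$ of $y$, namely $x_{j+2} \leq x_j + 1$ when $j + 1 < n$.

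The heart of the argument, and the main obstacle, is exhibiting such an index $j$. The key proposal is to take $j$ maximal in $[n-1]$ with the property $x_{j+1} = x_j + 1$. Such a $j$ exists because $x_1 = 0$ together with $1 \leq x_2 \leq x_1 + 1$ forces $x_2 = 1$, so $j = 1$ is always a candidate. By maximality of $j$, either $j + 1 = n$, in which case the adjacency constraint on $y$ is vacuous, or $x_{j+2} \neq x_{j+1} + 1$; in the latter case, combined with the $S$-condition $x_{j+2} \leq x_{j+1} + 1$, one obtains $x_{j+2} \leq x_{j+1} = x_j + 1$, precisely the inequality required. All remaining adjacencies in $y$ are inherited from $x$, closing the induction and completing the proof.
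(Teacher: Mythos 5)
Your proof is correct and follows essentially the same route as the paper: both directions proceed by induction on the length, the forward inclusion via Lemma~\ref{lem:GenerationOpNSEns} and the grafting formula, and the reverse inclusion by extracting the factor at the \emph{maximal} index $j$ with $x_{j+1} = x_j + 1$ (which exists since $x_2 = 1$) and using maximality to verify the new adjacency in $y$. Your write-up is slightly more explicit about which adjacencies need checking, but the argument is the same.
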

\begin{proof}
    Let us first show by induction on the length of the words that any
    word~$x$ of~$\APE$ satisfies the statement. This is true
    when~$|x| = 1$. When~$|x| \geq 2$, by Lemma~\ref{lem:GenerationOpNSEns},
    there is an element~$y$ of~$\APE$ of length~$n := |x| - 1$ and an
    integer~$i \in [n]$ such that~$x = y \circ_i 01$. We have
    \begin{equation}
        x = (y_1, \dots, y_{i - 1}, y_i, y_i + 1, y_{i + 1}, \dots, y_n).
    \end{equation}
    Since~$x_{i + 1} = x_i + 1$ and since, by induction hypothesis,~$y$
    satisfies the statement,~$x$ also satisfies~it.
    \smallskip

    Let us now show by induction on the length of the words that~$\APE$
    contains any word~$x$ satisfying the statement. This is true
    when~$|x| = 1$. When~$n := |x| \geq 2$, since~$x_1 = 0$ and~$x_2 = 1$,
    $x$ has a factor~$x_i x_{i + 1}$ where~$i$ is the greatest integer such
    that~$x_{i+1} = x_i + 1$. Now, by setting
    \begin{equation}
        y := (x_1, \dots, x_i, x_{i+2}, \dots, x_n),
    \end{equation}
    we have~$x = y \circ_i 01$, and, since~$i$ is maximal, if~$i + 2 \leq n$
    we have~$x_{i+2} \leq x_{i+1}$. This implies that~$y$ satisfies the
    statement. By induction hypothesis,~$\APE$ contains~$y$ and,
    since~$x = y \circ_i 01$, $\APE$ also contains~$x$.
\end{proof}
\medskip

Recall that there are~$\frac{1}{n} \binom{2n - 2}{n - 1}$ planar rooted
trees with~$n$ nodes. There is a bijection~$\phi_\APE$
between the words of~$\APE$ of arity~$n$ and planar rooted trees with~$n$
nodes.
\medskip

To compute~$\phi_\APE(x)$ where~$x$ is an element of~$\APE$, iteratively
insert the letters of~$x$ from left to right according to the following
procedure. If~$|x| = 1$, then~$x = 0$ and~$\phi_\APE(0)$ is the only
planar rooted tree with one node. Otherwise, the insertion of a
letter~$\La \geq 1$ into a planar rooted tree~$T$ consists in grafting
in~$T$ a new node as the rightmost child of the last node of depth~$\La - 1$
for the depth-first traversal of~$T$.
\medskip

The inverse bijection is computed as follows. Given a planar rooted tree~$T$
of size~$n$, one computes an element of~$\APE$ of arity~$n$ by labelling
each node of~$T$ by its depth and then, by reading its labels following
a depth-first traversal of~$T$.
\medskip

Since the elements of~$\APE$ satisfy
Proposition~\ref{prop:MotsAPE},~$\phi_\APE$ is well-defined. Hence, we can
regard the elements of arity~$n$ of~$\APE$ as planar rooted trees with~$n$
nodes. Figure~\ref{fig:InterpretationAPE} shows an example of this bijection.
\begin{figure}[ht]
    \centering
    \begin{equation*}
        \begin{split}0112333212 \quad \xrightarrow{\phi_\APE} \quad \end{split}
        \begin{split}\scalebox{.3}{\begin{tikzpicture}
            \node[Noeud,EtiqClair](1)at(0,0){\Huge $0$};
            \node[Noeud,EtiqClair](2)at(-2,-2){\Huge $1$};
            \node[Noeud,EtiqClair](3)at(0,-2){\Huge $1$};
            \node[Noeud,EtiqClair](4)at(-1,-4){\Huge $2$};
            \node[Noeud,EtiqClair](5)at(-2.5,-6){\Huge $3$};
            \node[Noeud,EtiqClair](6)at(-1,-6){\Huge $3$};
            \node[Noeud,EtiqClair](7)at(0.5,-6){\Huge $3$};
            \node[Noeud,EtiqClair](8)at(1,-4){\Huge $2$};
            \node[Noeud,EtiqClair](9)at(2,-2){\Huge $1$};
            \node[Noeud,EtiqClair](10)at(3,-4){\Huge $2$};
            \draw[Arete](1)--(2);
            \draw[Arete](1)--(3);
            \draw[Arete](3)--(4);
            \draw[Arete](4)--(5);
            \draw[Arete](4)--(6);
            \draw[Arete](4)--(7);
            \draw[Arete](3)--(8);
            \draw[Arete](1)--(9);
            \draw[Arete](9)--(10);
        \end{tikzpicture}}\end{split}
        \quad \longleftrightarrow \quad
        \begin{split}\scalebox{.3}{\begin{tikzpicture}
            \node[Noeud](1)at(0,0){};
            \node[Noeud](2)at(-2,-2){};
            \node[Noeud](3)at(0,-2){};
            \node[Noeud](4)at(-1,-4){};
            \node[Noeud](5)at(-2.5,-6){};
            \node[Noeud](6)at(-1,-6){};
            \node[Noeud](7)at(0.5,-6){};
            \node[Noeud](8)at(1,-4){};
            \node[Noeud](9)at(2,-2){};
            \node[Noeud](10)at(3,-4){};
            \draw[Arete](1)--(2);
            \draw[Arete](1)--(3);
            \draw[Arete](3)--(4);
            \draw[Arete](4)--(5);
            \draw[Arete](4)--(6);
            \draw[Arete](4)--(7);
            \draw[Arete](3)--(8);
            \draw[Arete](1)--(9);
            \draw[Arete](9)--(10);
        \end{tikzpicture}}\end{split}
    \end{equation*}
    \caption{Interpretation of an element of the ns operad~$\APE$ in terms
    of planar rooted trees via the bijection~$\phi_\APE$. The nodes of
    the planar rooted tree in the middle are labeled by their depth.}
    \label{fig:InterpretationAPE}
\end{figure}
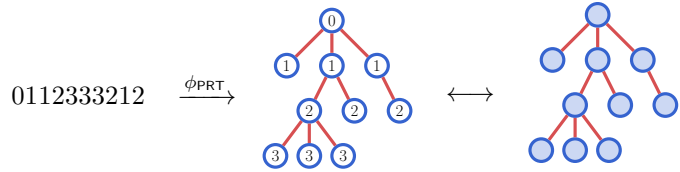
\medskip

The bijection~$\phi_\APE$ between elements of~$\APE$ and planar rooted
trees offers an alternative way to compute the composition of elements of~$\APE$:
\begin{Proposition} \label{prop:SubsAPE}
    Let~$S$ and~$T$ be two planar rooted trees and~$s$ be the $i$th node
    for the depth-first traversal of~$S$. The composition~$S \circ_i T$ in~$\APE$
    amounts to replace~$s$ by the root of~$T$ and graft the children
    of~$s$ as rightmost sons of the root of~$T$.
\end{Proposition}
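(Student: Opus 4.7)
The plan is to work directly with the word encoding via $\phi_\APE$. Write $x := \phi_\APE^{-1}(S)$ and $y := \phi_\APE^{-1}(T)$, with $n := |x|$ and $m := |y|$. By definition of $\phi_\APE$, the letter $x_k$ (resp.\ $y_k$) equals the depth of the $k$-th node of $S$ (resp.\ $T$) in the depth-first traversal; in particular $x_1 = y_1 = 0$. Since the monoid operation on $\EnsNat$ is addition, the grafting formula in $\T\EnsNat$ combined with $y_1 = 0$ gives
\begin{equation}
x \circ_i y = (x_1, \dots, x_{i-1},\ x_i,\ x_i + y_2, \dots, x_i + y_m,\ x_{i+1}, \dots, x_n).
\end{equation}

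The key observation I would record is the following reading of $\phi_\APE$: for any $z \in \APE(k)$ and any index $j$, the subtree of $\phi_\APE(z)$ rooted at the $j$-th node corresponds to the maximal contiguous block $z_j, z_{j+1}, \dots, z_{j+k'-1}$ such that $z_l > z_j$ for every $l$ with $j < l \leq j + k' - 1$, and within this block the children of the $j$-th node are exactly the positions of depth $z_j + 1$. Using this, I would split $x \circ_i y$ into its prefix $(x_1, \dots, x_{i-1})$, its middle block $(x_i, x_i + y_2, \dots, x_i + y_m)$, and its suffix $(x_{i+1}, \dots, x_n)$. By Proposition~\ref{prop:MotsAPE}, one has $y_l \geq 1$ for $l \geq 2$, so within the middle block only the very first entry equals $x_i$ and all others are strictly greater; hence the middle block realizes an isomorphic copy of $T$ rooted at position $i$. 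Continuing the subtree at position $i$ into the suffix, the positions from the suffix that still belong to this subtree are exactly those coming from $x_{i+1}, \dots, x_{j_0 - 1}$, where $j_0$ is the least index greater than $i$ with $x_{j_0} \leq x_i$, which are precisely the descendants of $s$ in $S$.

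Applying the children characterization, the children of position $i$ in $x \circ_i y$, read from left to right, are first the positions of the middle block of depth $x_i + 1$ (i.e.\ the indices $l$ with $y_l = 1$, which are the children of the root of $T$ in $T$), and then the positions of the suffix of depth $x_i + 1$ that lie in the enlarged subtree (i.e.\ the children of $s$ in $S$). This is exactly the claimed description: $s$ is replaced by the root of $T$ and the children of $s$ are grafted as the rightmost sons of that root. The recursive tree structure on the remaining nodes is inherited unchanged, since the prefix and the portion of the suffix outside the subtree of $s$ encode the corresponding parts of $S$ verbatim, and the internal structure of $T$ is reproduced inside the middle block by the same reading.

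The only real obstacle is combinatorial bookkeeping: one must verify carefully that the subtree/children characterization transfers correctly across the boundary between the middle block and the suffix, since it is precisely at that junction that the statement ``the children of $s$ become the rightmost sons of the root of $T$'' gets realized.
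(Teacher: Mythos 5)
Your proof is correct and follows essentially the same route as the paper's: both pass to the word encodings via $\phi_\APE$, use $y_1 = 0$ (and $y_l \geq 1$ for $l \geq 2$, from Proposition~\ref{prop:MotsAPE}) to see that the inserted block $(x_i,\, x_i + y_2, \dots, x_i + y_m)$ realizes a copy of $T$ rooted at position~$i$, and then check that the suffix letters keep their parents. You make the bookkeeping at the junction between the middle block and the suffix (which is where ``rightmost sons'' is actually realized) more explicit than the paper does, but the underlying argument is the same.
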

\begin{proof}
    Let~$x \in \APE(n)$ and~$y \in \APE(m)$ such that~$S := \phi_\APE(x)$
    and~$T := \phi_\APE(y)$. Let $U := \phi_\APE(x \circ_i y)$. By
    definition of~$\phi_\APE$ and the partial composition maps of~$\APE$,~$U$ is
    obtained by inserting the prefix of length~$i - 1$ of~$x$, then the
    letters of~$y$ incremented by~$x_i$, and finally, the suffix of
    length~$n - i$ of~$x$. Since by Proposition~\ref{prop:MotsAPE},~$y$
    starts by~$0$, the nodes created by inserting the letters of~$y$
    incremented by~$x_i$ are descendants of the node created by
    inserting~$x_i = y_1$. Moreover, the nodes corresponding to the letters
    of the suffix of length~$n - i$ of~$x$ have same parents as they have
    in~$T$. This implies the statement.
\end{proof}
\medskip

Figure~\ref{fig:SubsAPE} shows an example of composition in~$\APE$.
\begin{figure}[ht]
    \centering
    \begin{equation*}
        \begin{split}\scalebox{.3}{\begin{tikzpicture}
            \node[Noeud](1)at(0,0){};
            \node[Noeud,Marque2](2)at(-1,-2){};
            \node[Noeud](3)at(-1,-4){};
            \node[Noeud](4)at(1,-2){};
            \draw[Arete](1)--(2);
            \draw[Arete](2)--(3);
            \draw[Arete](1)--(4);
        \end{tikzpicture}}\end{split}
        \enspace \circ_2 \enspace
        \begin{split}\scalebox{.3}{\begin{tikzpicture}
            \node[Noeud,Marque1](1)at(0,0){};
            \node[Noeud,Marque1](2)at(-1.5,-2){};
            \node[Noeud,Marque1](3)at(0,-2){};
            \node[Noeud,Marque1](4)at(0,-4){};
            \node[Noeud,Marque1](5)at(1.5,-2){};
            \draw[Arete](1)--(2);
            \draw[Arete](1)--(3);
            \draw[Arete](3)--(4);
            \draw[Arete](1)--(5);
        \end{tikzpicture}}\end{split}
        \enspace = \enspace
        \begin{split}\scalebox{.3}{\begin{tikzpicture}
            \node[Noeud](1)at(0,0){};
            \node[Noeud,Marque1](2)at(-1,-2){};
            \node[Noeud,Marque1](3)at(-3.5,-4){};
            \node[Noeud,Marque1](4)at(-2,-4){};
            \node[Noeud,Marque1](5)at(-2,-6){};
            \node[Noeud,Marque1](6)at(-.5,-4){};
            \node[Noeud](7)at(1,-4){};
            \node[Noeud](8)at(1,-2){};
            \draw[Arete](1)--(2);
            \draw[Arete](2)--(3);
            \draw[Arete](2)--(4);
            \draw[Arete](4)--(5);
            \draw[Arete](2)--(6);
            \draw[Arete](2)--(7);
            \draw[Arete](1)--(8);
        \end{tikzpicture}}\end{split}
    \end{equation*}
    \caption{Interpretation of the partial composition map of the ns operad~$\APE$
    in terms of planar rooted trees.}
    \label{fig:SubsAPE}
\end{figure}
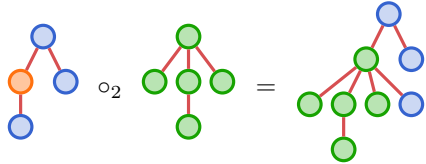
\medskip

\begin{Proposition} \label{prop:PresentationAPE}
    The ns operad~$\APE$ is isomorphic to the free ns operad generated by one
    element of arity~$2$.
\end{Proposition}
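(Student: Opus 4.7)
The plan is to apply Lemma~\ref{lem:PresentationReecriture} with generator set $G := \{01\}$ (a single element of arity~$2$) and with the trivial equivalence relation $\leftrightarrow$ on $\CalF(G)$, namely equality. With this choice, the operadic congruence $\equiv$ generated by $\leftrightarrow$ is also equality, so $\CalF(G)/_\equiv$ coincides with the free operad $\CalF(G)$ on one binary generator, which is precisely the operad whose isomorphism with~$\APE$ we must exhibit.

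By construction, $\APE$ is generated by~$01$, so the evaluation map $\Eval : \CalF(G) \to \APE$ is surjective. Condition~\ref{item:ReecritureConditionEquiv} of Lemma~\ref{lem:PresentationReecriture} holds vacuously since $x \leftrightarrow x'$ here means $x = x'$. For condition~\ref{item:ReecritureConditionFN}, I would take $\mapsto$ to be the empty rewrite rule; this is trivially terminating, and every element of $\CalF(G)(n)$ is a normal form. Hence $\#(\mapsto_n)$ equals the number of planar binary trees with $n$ leaves, namely the Catalan number $\frac{1}{n}\binom{2n-2}{n-1}$.

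It therefore remains to verify that $\#\APE(n) = \frac{1}{n}\binom{2n-2}{n-1}$, which is the number of planar rooted trees with $n$ nodes. This is provided by the bijection $\phi_\APE$ described just after Proposition~\ref{prop:MotsAPE}: one has to check that $\phi_\APE$ is well defined (its output is indeed a planar rooted tree, and the reverse procedure, labelling each node by its depth and reading these labels in depth-first order, produces a word in the set characterized by Proposition~\ref{prop:MotsAPE}) and bijective. Both amount to routine depth-first bookkeeping on the constraint $1 \leq x_{i+1} \leq x_i + 1$.

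The main obstacle is precisely this dimension count, since the rest of the argument flows mechanically from Lemma~\ref{lem:PresentationReecriture}. A possible alternative that avoids the planar-tree enumeration would be to construct an inverse of $\Eval$ directly by induction on arity, using that any $x \in \APE(n)$ with $n \geq 2$ admits a \emph{unique} factorization $x = y \circ_i 01$, obtained by choosing~$i$ maximal with $x_{i+1} = x_i + 1$, as in the proof of Proposition~\ref{prop:MotsAPE}. Uniqueness of this factorization matches the unique rightmost decomposition in the free operad on a binary generator and would yield the injectivity of $\Eval$ without invoking Catalan numbers.
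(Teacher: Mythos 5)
Your proof is correct and follows essentially the same route as the paper: both arguments reduce the statement to the enumeration $\#\APE(n) = \frac{1}{n}\binom{2n-2}{n-1}$ furnished by Proposition~\ref{prop:MotsAPE} and the bijection~$\phi_\APE$, combined with the surjectivity of the evaluation morphism from the free operad on one binary generator. Packaging this through Lemma~\ref{lem:PresentationReecriture} with the trivial congruence and the empty rewrite rule is a harmless reformulation of the paper's direct ``no nontrivial relations'' argument, and your closing remark on the uniqueness of the factorization $x = y \circ_i 01$ with $i$ maximal would indeed yield an equivalent, enumeration-free proof of injectivity.
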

\begin{proof}
    By the characterization of its elements given by
    Proposition~\ref{prop:MotsAPE} and the bijection $\phi_\APE$, there are as
    many elements in~$\APE$ of arity~$n$ than elements of arity~$n$ of the free
    ns operad generated by one element of arity~$2$. These two ns operads
    are hence isomorphic.
\end{proof}
\medskip

Proposition~\ref{prop:PresentationAPE} also says that~$\APE$ is isomorphic
to the magmatic operad and hence, that~$\APE$ is a realization of the magmatic
operad. This result is already known since in~\cite{MY91}, Méndez and Yang
point out that the species of parenthesizations (binary trees) and the species
of planar rooted trees are isomorphic. This isomorphism implies that these
species are also isomorphic as ns operads. Moreover,~$\APE$ can be seen as
a planar version of the {\em non-associative permutative operad}~$\NAP$
\cite{MY91} (see also~\cite{Liv06}) seen as a ns operad, which is an operad
involving labeled non-planar rooted trees.
\medskip

\subsubsection{A ns operad on leafy trees with a fixed arity}
Let~$k \geq 0$ be an integer and~$\FCat{k}$ be the ns suboperad
of~$\T \EnsNat$ generated by~$00$,~$01$,~\dots,~$0k$. The following tables,
respectively, show the first elements of~$\FCat{1}$ and~$\FCat{2}$.
\begin{center}
    \begin{tabular}{c|p{11cm}}
        Arity & Elements of~$\FCat{1}$ \\ \hline \hline
        $1$ & $0$ \\ \hline
        $2$ & $00$, $01$ \\ \hline
        $3$ & $000$, $001$, $010$, $011$, $012$ \\ \hline
        $4$ & $0000$, $0001$, $0010$, $0011$, $0012$, $0100$, $0101$,
              $0110$, $0111$, $0112$, $0120$, $0121$, $0122$, $0123$
    \end{tabular}
\end{center}
\begin{center}
    \begin{tabular}{c|p{11cm}}
        Arity & Elements of~$\FCat{2}$ \\ \hline \hline
        $1$ & $0$ \\ \hline
        $2$ & $00$, $01$, $02$ \\ \hline
        $3$ & $000$, $001$, $002$, $010$, $011$, $012$, $013$,
              $020$, $021$, $022$, $023$, $024$ \\ \hline
        $4$ & $0000$, $000$1, $0002$, $0010$, $0011$, $0012$, $0013$, $0020$,
              $0021$, $0022$, $0023$, $0024$, $0100$, $0101$, $0102$, $0110$,
              $0111$, $0112$, $0113$, $0120$, $0121$, $0122$, $0123$, $0124$,
              $0130$, $0131$, $0132$, $0133$, $0134$, $0135$, $0200$, $0201$,
              $0202$, $0210$, $0211$, $0212$, $0213$, $0220$, $0221$, $0222$,
              $0223$, $0224$, $0230$, $0231$, $0232$, $0233$, $0234$, $0235$,
              $0240$, $0241$, $0242$, $0243$, $0244$, $0245$, $0246$
    \end{tabular}
\end{center}
\medskip

It is immediate from the definition of~$\FCat{k}$ that for any~$k \geq 0$,
$\FCat{k}$ is a ns suboperad of~$\FCat{k + 1}$. Hence, the ns operads~$\FCat{k}$
form an increasing sequence (for inclusion) of ns operads. Note that~$\FCat{0}$
is isomorphic to the associative commutative operad~$\Com$. Note also that
since~$\FCat{1}$ is generated by~$00$ and~$01$ and since~$\APE$ is generated
by~$01$,~$\APE$ is a ns suboperad of~$\FCat{1}$. Moreover,~$\FCat{0}$ is a
quotient of~$\APE$ by the ns operadic congruence~$\equiv$ defined for
all~$x, y \in \APE(n)$ by~$x \equiv y$.
\medskip

One has the following characterization of the elements of~$\FCat{k}$:
\begin{Proposition} \label{prop:MotsFCat}
    The elements of~$\FCat{k}$ are exactly the words~$x$ on the
    alphabet~$\EnsNat$ satisfying~\mbox{$x_1 = 0$} and
    $0 \leq x_{i + 1} \leq x_i + k$ for all $i \in [|x| - 1]$.
\end{Proposition}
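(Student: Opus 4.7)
The plan is to mimic the pattern of Proposition~\ref{prop:MotsAPE} (the $\APE$ case) and prove both inclusions by induction on the length of the word. Let me write $(\star)$ for the property ``$x_1 = 0$ and $0 \leq x_{i+1} \leq x_i + k$ for all $i \in [|x|-1]$''.

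For the forward direction (elements of $\FCat{k}$ satisfy $(\star)$), I proceed by induction on $|x|$. The case $|x| = 1$ is immediate since $x = 0$ (the unit). For $|x| \geq 2$, Lemma~\ref{lem:GenerationOpNSEns} applied to $\FCat{k}$ with generator set $\{00, 01, \ldots, 0k\}$ yields $x = y \circ_i 0j$ for some $y \in \FCat{k}$ of length $|x|-1$, some $i \in [|y|]$, and some $j \in \{0, 1, \ldots, k\}$. The definition of $\circ_i$ in $\T \EnsNat$ inserts the letter $y_i + j$ just after position $i$, so $x_i \leq x_{i+1} = x_i + j \leq x_i + k$. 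The surrounding constraints in $(\star)$ either come directly from $y$ (which satisfies $(\star)$ by induction hypothesis) or are weaker than the corresponding constraint in $y$ (since $x_{i+1} \geq x_i$ implies $x_{i+2} = y_{i+1} \leq y_i + k \leq x_{i+1} + k$).

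For the reverse direction (words satisfying $(\star)$ belong to $\FCat{k}$), I again induct on $|x|$, with the base case being the unit. For $n := |x| \geq 2$, the key idea is to identify a position $i$ at which one letter of $x$ can be ``peeled off'' via a graft with a single generator $0j$. I take $i$ to be the \emph{largest} index such that $x_{i+1} \geq x_i$; this exists because $x_1 = 0 \leq x_2$ forces at least $i = 1$ to work. Setting $j := x_{i+1} - x_i$, the bound in $(\star)$ gives $0 \leq j \leq k$, and the word $y := (x_1, \dots, x_i, x_{i+2}, \dots, x_n)$ satisfies $x = y \circ_i 0j$. It then suffices to verify that $y$ still satisfies $(\star)$, so that the induction hypothesis places $y$ (hence $x$) in $\FCat{k}$. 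The only new constraint to check in $y$ compares $y_i = x_i$ to $y_{i+1} = x_{i+2}$ (when $i < n-1$), and maximality of $i$ gives $x_{i+2} < x_{i+1} \leq x_i + k$, so $x_{i+2} \leq x_i + k$ as needed. The constraints on positions before $i$ or after $i+1$ in $y$ are inherited from the corresponding positions of $x$.

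The main obstacle will be the reverse direction, specifically making the right choice of $i$. Any valid choice must simultaneously guarantee that $x_{i+1} - x_i$ lies in $\{0, 1, \dots, k\}$ (so that a single generator $0j$ does the job) and that removing $x_{i+1}$ preserves $(\star)$; choosing $i$ maximal with $x_{i+1} \geq x_i$ achieves both at once. Beyond this pivot choice, every remaining step is a routine verification of the condition $(\star)$ on the shorter word $y$.
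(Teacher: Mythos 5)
Your proposal is correct and follows essentially the same route as the paper's proof: the forward direction uses Lemma~\ref{lem:GenerationOpNSEns} to write $x = y \circ_i 0j$ and propagates the constraints by induction, and the reverse direction peels off a letter at the greatest index $i$ with $x_i \leq x_{i+1}$, exactly the pivot the paper chooses. Your verification that removing $x_{i+1}$ preserves the condition (via $x_{i+2} < x_{i+1} \leq x_i + k$) matches the paper's argument, if anything spelling out a detail the paper leaves implicit.
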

\begin{proof}
    Let us first show by induction on the length of the words that any
    word~$x$ of~$\FCat{k}$ satisfies the statement. This is
    true when~$|x| = 1$. When~$|x| \geq 2$, by
    Lemma~\ref{lem:GenerationOpNSEns}, there is an element~$y$ of~$\FCat{k}$
    of length~$n := |x| - 1$, an integer~$i \in [n]$, and~$0 \leq h \leq k$
    such that $x = y \circ_i 0h$. We have
    \begin{equation}
        x = (y_1, \dots, y_{i - 1}, y_i, y_i + h, y_{i + 1}, \dots, y_n).
    \end{equation}
    Since~$x_{i + 1} = x_i + h$ and~$0 \leq h \leq k$, and since, by
    induction hypothesis,~$y$ satisfies the statement,~$x$ also satisfies it.
    \smallskip

    Let us now show by induction on the length of the words that~$\FCat{k}$
    contains any word~$x$ satisfying the statement. This is
    true when~$|x| = 1$. When~$n := |x| \geq 2$, since~$x_1 = 0$
    and~$0 \leq x_2 \leq k$, $x$ has a factor~$x_i x_{i + 1}$ where~$i$
    is the greatest integer such that~$x_i \leq x_{i + 1}$. Now, by
    setting~$h := x_{i + 1} - x_i$ and
    \begin{equation}
        y := (x_1, \dots, x_i, x_{i + 2}, \dots, x_n),
    \end{equation}
    we have~$x = y \circ_i 0h$. Since~$i$ is maximal, if~$i + 2 \leq n$
    we have~$x_{i + 2} < x_{i + 1}$. This implies that~$y$ satisfies
    the statement. By induction hypothesis,~$\FCat{k}$ contains~$y$
    and, since~$x = y \circ_i 0h$, $\FCat{k}$ also contains~$x$.
\end{proof}
\medskip

A {\em $k$-leafy tree} is a planar rooted tree such that each internal node
has exactly~$k + 1$ children. The size~$|T|$ of a $k$-leafy tree~$T$ is
the number of its internal nodes. It is well-known that there
are~$\frac{1}{kn + 1}\binom{kn + n}{n}$ $k$-leafy trees of size~$n$. We say
that an internal node~$x$ is {\em smaller} than an internal node~$y$ of~$T$
if, in the depth-first traversal of~$T$, $x$ appears before~$y$. We also
say that a $k$-leafy tree $T$ is {\em well-labeled} if its root is labeled
by~$0$, and, for each internal node~$x$ of~$T$ labeled by~$\La$, the children
of~$x$ are labeled, from left to right, by~$\La + k$, \dots, $\La + 1$,
$\La$. There is a unique way to label a $k$-leafy tree so that it is
well-labeled. There is a bijection~$\phi_\FCat{k}$ between the words
of~$\FCat{k}$ of arity~$n$ and well-labeled $k$-leafy trees of size~$n$.
\medskip

To compute~$\phi_\FCat{k}(x)$ where~$x$ is an element of~$\FCat{k}$,
iteratively insert the letters of~$x$ from left to right according to the
following procedure. If~$|x| = 1$, then~$x = 0$ and~$\phi_\FCat{k}(x)$
is the only well-labeled $k$-leafy tree of size~$1$. Otherwise, the insertion
of a letter~$\La \geq 0$ into a well-labeled $k$-leafy tree~$T$ consists
in replacing a leaf of~$T$ by the $k$-leafy tree~$S$ of size~$1$ labeled
by~$\La$ so that~$S$ is the child of the greatest internal node such that
the obtained $k$-tree is still well-labeled.
\medskip

The inverse bijection is computed as follows. Given a well-labeled $k$-leafy
tree~$T$, one computes an element of~$\FCat{k}$ of arity~$n$ by reading
its labels following a depth-first traversal of~$T$.
\medskip

Since the elements of~$\FCat{k}$ satisfy
Proposition~\ref{prop:MotsFCat},~$\phi_\FCat{k}$ is well-defined. Hence,
we can regard the elements of arity~$n$ of~$\FCat{k}$ as $k$-leafy trees of
size~$n$. Figure~\ref{fig:BijFCatKArbres} shows an example of this bijection.
\begin{figure}[ht]
    \centering
    \begin{equation*}
        \begin{split}024021121 \quad \xrightarrow{\phi_\FCat{2}} \quad \end{split}
        \begin{split}\scalebox{.25}{\begin{tikzpicture}
            \node[Feuille](0)at(0.,-4.5){};
            \node[Noeud,EtiqClair](1)at(1.,-3.){$4$};
            \node[Feuille](2)at(1.,-4.5){};
            \node[Feuille](3)at(2.,-4.5){};
            \node[Noeud,EtiqClair](4)at(3.,-1.5){$2$};
            \node[Feuille](5)at(3.,-3.){};
            \node[Feuille](6)at(4.,-3.){};
            \node[Noeud,EtiqClair](7)at(5.,0.){$0$};
            \node[Feuille](8)at(5.,-1.5){};
            \node[Feuille](9)at(6.,-4.5){};
            \node[Noeud,EtiqClair](10)at(7.,-3.){$2$};
            \node[Feuille](11)at(7.,-4.5){};
            \node[Feuille](12)at(8.,-4.5){};
            \node[Noeud,EtiqClair](13)at(9.,-1.5){$0$};
            \node[Feuille](14)at(9.,-4.5){};
            \node[Noeud,EtiqClair](15)at(10.,-3.){$1$};
            \node[Feuille](16)at(10.,-4.5){};
            \node[Feuille](17)at(11.,-6.){};
            \node[Noeud,EtiqClair](18)at(12.,-4.5){$1$};
            \node[Feuille](19)at(12.,-7.5){};
            \node[Noeud,EtiqClair](20)at(13.,-6.){$2$};
            \node[Feuille](21)at(13.,-7.5){};
            \node[Feuille](22)at(14.,-7.5){};
            \node[Feuille](23)at(15.,-7.5){};
            \node[Noeud,EtiqClair](24)at(16.,-6.){$1$};
            \node[Feuille](25)at(16.,-7.5){};
            \node[Feuille](26)at(17.,-7.5){};
            \node[Feuille](27)at(18.,-3.){};
            \draw[Arete](0)--(1);
            \draw[Arete](1)--(4);
            \draw[Arete](2)--(1);
            \draw[Arete](3)--(1);
            \draw[Arete](4)--(7);
            \draw[Arete](5)--(4);
            \draw[Arete](6)--(4);
            \draw[Arete](8)--(7);
            \draw[Arete](9)--(10);
            \draw[Arete](10)--(13);
            \draw[Arete](11)--(10);
            \draw[Arete](12)--(10);
            \draw[Arete](13)--(7);
            \draw[Arete](14)--(15);
            \draw[Arete](15)--(13);
            \draw[Arete](16)--(15);
            \draw[Arete](17)--(18);
            \draw[Arete](18)--(15);
            \draw[Arete](19)--(20);
            \draw[Arete](20)--(18);
            \draw[Arete](21)--(20);
            \draw[Arete](22)--(20);
            \draw[Arete](23)--(24);
            \draw[Arete](24)--(18);
            \draw[Arete](25)--(24);
            \draw[Arete](26)--(24);
            \draw[Arete](27)--(13);
        \end{tikzpicture}}\end{split}
        \quad \longleftrightarrow \quad
        \begin{split}\scalebox{.25}{\begin{tikzpicture}
            \node[Feuille](0)at(0.,-4.5){};
            \node[Noeud](1)at(1.,-3.){};
            \node[Feuille](2)at(1.,-4.5){};
            \node[Feuille](3)at(2.,-4.5){};
            \node[Noeud](4)at(3.,-1.5){};
            \node[Feuille](5)at(3.,-3.){};
            \node[Feuille](6)at(4.,-3.){};
            \node[Noeud](7)at(5.,0.){};
            \node[Feuille](8)at(5.,-1.5){};
            \node[Feuille](9)at(6.,-4.5){};
            \node[Noeud](10)at(7.,-3.){};
            \node[Feuille](11)at(7.,-4.5){};
            \node[Feuille](12)at(8.,-4.5){};
            \node[Noeud](13)at(9.,-1.5){};
            \node[Feuille](14)at(9.,-4.5){};
            \node[Noeud](15)at(10.,-3.){};
            \node[Feuille](16)at(10.,-4.5){};
            \node[Feuille](17)at(11.,-6.){};
            \node[Noeud](18)at(12.,-4.5){};
            \node[Feuille](19)at(12.,-7.5){};
            \node[Noeud](20)at(13.,-6.){};
            \node[Feuille](21)at(13.,-7.5){};
            \node[Feuille](22)at(14.,-7.5){};
            \node[Feuille](23)at(15.,-7.5){};
            \node[Noeud](24)at(16.,-6.){};
            \node[Feuille](25)at(16.,-7.5){};
            \node[Feuille](26)at(17.,-7.5){};
            \node[Feuille](27)at(18.,-3.){};
            \draw[Arete](0)--(1);
            \draw[Arete](1)--(4);
            \draw[Arete](2)--(1);
            \draw[Arete](3)--(1);
            \draw[Arete](4)--(7);
            \draw[Arete](5)--(4);
            \draw[Arete](6)--(4);
            \draw[Arete](8)--(7);
            \draw[Arete](9)--(10);
            \draw[Arete](10)--(13);
            \draw[Arete](11)--(10);
            \draw[Arete](12)--(10);
            \draw[Arete](13)--(7);
            \draw[Arete](14)--(15);
            \draw[Arete](15)--(13);
            \draw[Arete](16)--(15);
            \draw[Arete](17)--(18);
            \draw[Arete](18)--(15);
            \draw[Arete](19)--(20);
            \draw[Arete](20)--(18);
            \draw[Arete](21)--(20);
            \draw[Arete](22)--(20);
            \draw[Arete](23)--(24);
            \draw[Arete](24)--(18);
            \draw[Arete](25)--(24);
            \draw[Arete](26)--(24);
            \draw[Arete](27)--(13);
        \end{tikzpicture}}\end{split}
    \end{equation*}
    \caption{Interpretation of an element of the ns operad~$\FCat{2}$
    in terms of $2$-leafy trees via the bijection~$\phi_\FCat{2}$.
    The $2$-leafy tree in the middle is well-labeled.}
    \label{fig:BijFCatKArbres}
\end{figure}
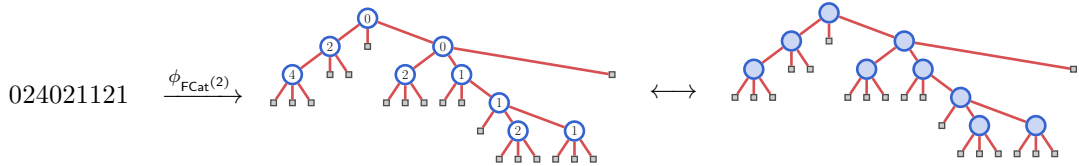
\medskip

The bijection~$\phi_\FCat{k}$ between elements of~$\FCat{k}$ and $k$-leafy
trees offers an alternative way to compute the composition of elements
of~$\FCat{k}$:
\begin{Proposition} \label{prop:SubsFCatK}
    Let~$S$ and~$T$ be two $k$-leafy trees and~$s$ be the $i$th internal
    node for the depth-first traversal of~$S$.
    The composition~$S \circ_i T$ in~$\FCat{k}$ amounts to replace~$s$ by
    the root of~$T$ and graft the children of~$s$ from right to left
    on the rightmost leaves of~$T$.
\end{Proposition}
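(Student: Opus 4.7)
The plan is to exploit the description of $\phi_{\FCat{k}}^{-1}$ given in the paper as the depth-first reading of internal node labels. Let $U$ denote the $k$-leafy tree obtained from $S := \phi_{\FCat{k}}(x)$ and $T := \phi_{\FCat{k}}(y)$ by the operation in the statement. I would show that $U$ is well-labeled and that its depth-first reading is the word $(x_1, \dots, x_{i-1}, x_i + y_1, \dots, x_i + y_m, x_{i+1}, \dots, x_n) = x \circ_i y$, which by bijectivity of $\phi_{\FCat{k}}$ identifies $U$ with $\phi_{\FCat{k}}(x \circ_i y)$.

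The crucial technical lemma, which I would prove by induction on the number of internal nodes, is: in any well-labeled $k$-leafy tree with root labeled $\mu$, the last $k + 1$ visits of the depth-first traversal are $k + 1$ leaves whose required labels in DFS order are $\mu + k, \mu + k - 1, \dots, \mu + 1, \mu$, with no internal node visited among them. The inductive step splits on whether the rightmost child of the root is internal (in which case one recurses into its subtree, whose root is also labeled $\mu$) or a leaf (in which case one peels off the maximal suffix of rightmost leaf children of the root and then recurses into the rightmost internal subtree to complete the list).

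Granted this lemma, the DFS of $U$ unfolds mechanically. The first $i - 1$ internal nodes of $U$ coincide with those of $S$ preceding $s$, contributing $x_1, \dots, x_{i-1}$. The next visit is the root of the transplanted $T$ at the position of $s$, labeled $x_i = x_i + y_1$; the remaining internal nodes of the transplanted $T$ follow in $T$'s DFS order with labels shifted by $x_i$, contributing $x_i + y_2, \dots, x_i + y_m$. By the lemma, all of these are visited strictly before any grafted subtree is entered. The grafting rule ``from right to left'' pairs the $j$-th child of $s$ from the left (having required label $x_i + k + 1 - j$) with the $j$-th of the last $k + 1$ leaves of $T$ in DFS order (having the same required label by the lemma); this both guarantees that $U$ is well-labeled and forces the grafted subtrees to be traversed in left-to-right order, matching the order in which the children of $s$ are visited in $S$'s DFS. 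This contributes $x_{i+1}, \dots, x_n$, completing the word $x \circ_i y$.

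The main obstacle is the lemma on the tail of the DFS of a well-labeled $k$-leafy tree. It is the only nonroutine ingredient: its proof requires a careful case split on the structure of the rightmost chain of the tree, and its conclusion is precisely what makes both the well-labeledness of $U$ and the clean separation between the internal nodes of $T$ and those of the grafted subtrees drop out in the depth-first reading.
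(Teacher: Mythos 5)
Your proposal is correct, but it runs the bijection in the opposite direction from the paper. The paper computes $U := \phi_{\FCat{k}}(x \circ_i y)$ by executing the forward insertion algorithm defining $\phi_{\FCat{k}}$ on the word $x \circ_i y$: the first $i-1$ letters rebuild the part of $S$ preceding $s$, the letters of $y$ shifted by $x_i$ build a copy of $T$ hanging at the position of $s$ (using that $y$ starts with $0$, so $x_i + y_1 = x_i$), and the unchanged suffix of $x$ reattaches the children of $s$; the key claim that these land, from right to left, on the rightmost leaves of $T$ is asserted directly from the behaviour of the insertion procedure. You instead perform the stated surgery on the trees and verify that the inverse map (depth-first reading of the forced labels) returns $x \circ_i y$, which hinges on your tail lemma: in a well-labeled $k$-leafy tree with root label $\mu$, the last $k+1$ positions of the depth-first traversal are leaves whose forced labels are $\mu+k, \dots, \mu$. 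That lemma is true and your induction closes: if $c_j$ is the rightmost internal child of the root, the last $j$ of the forced labels coming from the subtree of $c_j$ (root label $\mu + k + 1 - j$) are $\mu+k, \dots, \mu+k+1-j$, and the trailing leaf children contribute $\mu+k-j, \dots, \mu$, concatenating to exactly the required list. What your route buys is that the two facts the paper leaves implicit --- that every internal node of the transplanted $T$ is read before any grafted child subtree of $s$, and that the right-to-left pairing of children with rightmost leaves is precisely what well-labeledness forces --- are isolated in one checkable statement proved by a clean induction; the paper's route avoids the lemma but at the price of a terser appeal to how the insertion procedure places nodes. Both are valid proofs of the proposition.
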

\begin{proof}
    Let~$x \in \FCat{k}(n)$ and~$y \in \FCat{k}(m)$ such
    that~$S := \phi_{\FCat{k}}(x)$ and~$T := \phi_{\FCat{k}}(y)$.
    Let~$U := \phi_{\FCat{k}}(x \circ_i y)$. By definition of~$\phi_{\FCat{k}}$
    and the partial composition maps of~$\FCat{k}$,~$U$ is obtained by inserting
    the prefix of length~$i - 1$ of~$x$, then the letters of~$y$ incremented
    by~$x_i$, and finally, the suffix of length~$n - i$ of~$x$. Since by
    Proposition~\ref{prop:MotsFCat},~$y$ starts by~$0$, the internal nodes
    created by inserting the letters of~$y$ incremented by~$x_i$ are
    descendants of the internal node created by inserting~$x_i = y_1$.
    Since the last~$n - i$ letters of~$x \circ_i y$ are
    the same as the last~$n - i$ letters of~$x$, by definition
    of~$\phi_{\FCat{k}}$, the children of the $i$th internal node of~$S$ are
    grafted in~$U$ from right to left on the rightmost leaves of~$T$.
    This implies the statement.
\end{proof}
\medskip

Figure~\ref{fig:SubsFCatk} shows an example of composition in~$\FCat{2}$.
\begin{figure}[ht]
    \centering
    \begin{equation*}
        \begin{split}\scalebox{.3}{\begin{tikzpicture}
            \node[Feuille](0)at(0.,-3.){};
            \node[Noeud](1)at(1.,-1.5){};
            \node[Feuille](2)at(1.,-3.){};
            \node[Feuille](3)at(2.,-3.){};
            \node[Noeud,Marque2](4)at(3.,0.){};
            \node[Feuille](5)at(3.,-1.5){};
            \node[Feuille](6)at(4.,-4.5){};
            \node[Noeud](7)at(5.,-3.){};
            \node[Feuille](8)at(5.,-4.5){};
            \node[Feuille](9)at(6.,-4.5){};
            \node[Noeud](10)at(7.,-1.5){};
            \node[Feuille](11)at(7.,-3.){};
            \node[Feuille](12)at(8.,-3.){};
            \draw[Arete](0)--(1);
            \draw[Arete](1)--(4);
            \draw[Arete](2)--(1);
            \draw[Arete](3)--(1);
            \draw[Arete](5)--(4);
            \draw[Arete](6)--(7);
            \draw[Arete](7)--(10);
            \draw[Arete](8)--(7);
            \draw[Arete](9)--(7);
            \draw[Arete](10)--(4);
            \draw[Arete](11)--(10);
            \draw[Arete](12)--(10);
        \end{tikzpicture}}\end{split}
        \enspace \circ_1 \enspace
        \begin{split}\scalebox{.3}{\begin{tikzpicture}
            \node[Feuille](0)at(0.,-3.){};
            \node[Noeud,Marque1](1)at(1.,-1.5){};
            \node[Feuille](2)at(1.,-3.){};
            \node[Feuille](3)at(2.,-3.){};
            \node[Noeud,Marque1](4)at(3.,0.){};
            \node[Feuille](5)at(3.,-3.){};
            \node[Noeud,Marque1](6)at(4.,-1.5){};
            \node[Feuille](7)at(4.,-3.){};
            \node[Feuille](8)at(5.,-3.){};
            \node[Feuille](9)at(6.,-1.5){};
            \draw[Arete](0)--(1);
            \draw[Arete](1)--(4);
            \draw[Arete](2)--(1);
            \draw[Arete](3)--(1);
            \draw[Arete](5)--(6);
            \draw[Arete](6)--(4);
            \draw[Arete](7)--(6);
            \draw[Arete](8)--(6);
            \draw[Arete](9)--(4);
        \end{tikzpicture}}\end{split}
        \enspace = \enspace
        \begin{split}\scalebox{.3}{\begin{tikzpicture}
            \node[Feuille,Marque3](0)at(0.,-3.){};
            \node[Noeud,Marque1](1)at(1.,-1.5){};
            \node[Feuille,Marque3](2)at(1.,-3.){};
            \node[Feuille,Marque3](3)at(2.,-3.){};
            \node[Noeud,Marque1](4)at(3.,0.){};
            \node[Feuille,Marque3](5)at(3.,-3.){};
            \node[Noeud,Marque1](6)at(4.,-1.5){};
            \node[Feuille](7)at(4.,-4.5){};
            \node[Noeud](8)at(5.,-3.){};
            \node[Feuille](9)at(5.,-4.5){};
            \node[Feuille](10)at(6.,-4.5){};
            \node[Feuille](11)at(7.,-3.){};
            \node[Feuille](12)at(8.,-4.5){};
            \node[Noeud](13)at(9.,-3.){};
            \node[Feuille](14)at(9.,-4.5){};
            \node[Feuille](15)at(10.,-4.5){};
            \node[Noeud](16)at(11.,-1.5){};
            \node[Feuille](17)at(11.,-3.){};
            \node[Feuille](18)at(12.,-3.){};
            \draw[Arete](0)--(1);
            \draw[Arete](1)--(4);
            \draw[Arete](2)--(1);
            \draw[Arete](3)--(1);
            \draw[Arete](5)--(6);
            \draw[Arete](6)--(4);
            \draw[Arete](7)--(8);
            \draw[Arete](8)--(6);
            \draw[Arete](9)--(8);
            \draw[Arete](10)--(8);
            \draw[Arete](11)--(6);
            \draw[Arete](12)--(13);
            \draw[Arete](13)--(16);
            \draw[Arete](14)--(13);
            \draw[Arete](15)--(13);
            \draw[Arete](16)--(4);
            \draw[Arete](17)--(16);
            \draw[Arete](18)--(16);
        \end{tikzpicture}}\end{split}
    \end{equation*}
    \caption{Interpretation of the partial composition map of the ns
    operad~$\FCat{2}$ in terms of $2$-leafy trees.}
    \label{fig:SubsFCatk}
\end{figure}
\medskip

The next Theorem elucidates the structure of~$\FCat{k}$:
\begin{Theoreme} \label{thm:PresentationFCatk}
    The ns operad~$\FCat{k}$ admits the presentation
    \begin{equation}
        \FCat{k} = \CalF\left(\left\{\La_0, \dots, \La_k\right\}\right)/_\equiv,
    \end{equation}
    where the $\La_i$ are of arity~$2$ and~$\equiv$ is the ns operadic
    congruence generated by
    \begin{equation}
        \La_{i + j} \circ_1 \La_i \: \leftrightarrow \: \La_i \circ_2 \La_j,
        \qquad i, j \geq 0, i + j \leq k.
    \end{equation}
\end{Theoreme}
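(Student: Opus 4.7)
The strategy is to apply Lemma~\ref{lem:PresentationReecriture} with the set of generators $G := \{\La_0, \ldots, \La_k\}$ and the equivalence relation $\leftrightarrow$ on $\CalF(G)$ encoding the listed relations. Condition~\ref{item:ReecritureConditionEquiv} is a direct computation in~$\T \EnsNat$: both sides of each relation evaluate to
\[
    \Eval(\La_{i+j} \circ_1 \La_i) = (0, i+j) \circ_1 (0, i) = (0, i, i+j) = (0, i) \circ_2 (0, j) = \Eval(\La_i \circ_2 \La_j).
\]

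For condition~\ref{item:ReecritureConditionFN}, I orient $\leftrightarrow$ as the rewrite rule $\La_{i+j} \circ_1 \La_i \mapsto \La_i \circ_2 \La_j$. To prove termination, I introduce a statistic $\tau(T)$ that sums, over all internal nodes $u$ of $T$, the number of internal nodes lying in the left subtree of $u$ (a left-handed analogue of the weight~$\Poids$). Rewriting a subtree of the form $\La_{i+j}(\La_i(S_1, S_2), S_3)$ into $\La_i(S_1, \La_j(S_2, S_3))$ strictly decreases $\tau$ by $1 + \NbNoeuds(S_1)$, so $\mapsto$ terminates. The normal forms of $\mapsto$ are then exactly the syntax trees on $G$ such that every internal node labelled $\La_a$ whose left child is an internal node labelled $\La_b$ satisfies $b > a$.

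To match the number of normal forms with $|\FCat{k}(n)|$, I parameterize them by the label $a$ of their root and write $T_a(x)$ for the generating series of such normal forms of arity $\geq 2$, and $T(x) := x + \sum_{a = 0}^{k} T_a(x)$ for the full series. The recursive description (the right subtree is arbitrary, the left subtree is either a leaf or a normal form of root label~$> a$) yields
\[
    T_a(x) = \Bigl(x + \sum_{b = a+1}^{k} T_b(x)\Bigr) T(x),
\]
and a descending induction on $a$ gives $T_a(x) = x(1 + T(x))^{k-a} T(x)$, hence the functional equation $T(x) = x(1+T(x))^{k+1}$. This is precisely the defining equation of the Fuß--Catalan series enumerating $k$-leafy trees by number of internal nodes, which via the bijection $\phi_\FCat{k}$ is equal to $\sum_{n \geq 1} |\FCat{k}(n)|\, x^n$. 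Therefore $\#(\mapsto_n) = |\FCat{k}(n)|$ and Lemma~\ref{lem:PresentationReecriture} delivers the announced presentation.

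The main obstacle is the choice of orientation. The opposite orientation $\La_i \circ_2 \La_j \mapsto \La_{i+j} \circ_1 \La_i$ would decrease the standard weight $\Poids$ but fails to be confluent already at $k = 1$, $n = 4$, producing $15$ normal forms instead of $14$; this is what forces us to orient the relations in the direction that pushes structure from left to right and to replace~$\Poids$ by the dual statistic~$\tau$.
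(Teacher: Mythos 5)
Your proof is correct and follows essentially the same route as the paper's: same orientation of the relations, same characterization of the normal forms, and the same generating-series computation leading to the Fu\ss--Catalan equation $F(t) = t(1+F(t))^{k+1}$, concluded via Lemma~\ref{lem:PresentationReecriture}. The only (cosmetic) differences are that you certify termination with a strictly decreasing left-subtree statistic~$\tau$ where the paper observes that~$\Poids$ strictly increases and is bounded, and you add a correct remark (not in the paper) that the opposite orientation yields $15$ normal forms in arity~$4$ for $k=1$ and hence cannot be used with the lemma.
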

\begin{proof}
    First, note that by replacing~$\La_i$ by $0i \in \FCat{k}(2)$, we have
    $\Eval(x) = \Eval(y)$ for the relation $x \leftrightarrow y$ of the
    statement of the Theorem. Indeed, this equivalence class is the one
    of the element~$(0, i, i + j)$ of~$\FCat{k}$.
    \smallskip

    Consider now the orientation of~$\leftrightarrow$ into the rewrite
    rule~$\mapsto$ defined by
    \begin{equation} \label{eq:RelOrientationFCatk}
        \begin{split}\scalebox{.35}{\begin{tikzpicture}
            \node[Feuille](0)at(0.0,-2){};
            \node[Noeud,EtiqClair,minimum size=15mm](1)at(1.0,-1){$\La_i$};
            \node[Feuille](2)at(2.0,-2){};
            \draw[Arete](1)--(0);
            \draw[Arete](1)--(2);
            \node[Noeud,EtiqClair,minimum size=15mm](3)at(3.0,0){$\La_{i + j}$};
            \node[Feuille](4)at(4.0,-1){};
            \draw[Arete](3)--(1);
            \draw[Arete](3)--(4);
        \end{tikzpicture}}\end{split}
        \begin{split}\enspace \mapsto \enspace \end{split}
        \begin{split}\scalebox{.35}{\begin{tikzpicture}
            \node[Feuille](0)at(0.0,-1){};
            \node[Noeud,EtiqClair,minimum size=15mm](1)at(1.0,0){$\La_i$};
            \node[Feuille](2)at(2.0,-2){};
            \node[Noeud,EtiqClair,minimum size=15mm](3)at(3.0,-1){$\La_j$};
            \node[Feuille](4)at(4.0,-2){};
            \draw[Arete](3)--(2);
            \draw[Arete](3)--(4);
            \draw[Arete](1)--(0);
            \draw[Arete](1)--(3);
        \end{tikzpicture}}\end{split}.
    \end{equation}

    This rewrite rule is terminating. Indeed, it is plain that for any
    rewriting~$T_0 \to T_1$, we have~$\Poids(T_0) < \Poids(T_1)$.
    \smallskip

    Moreover, the normal forms of~$\mapsto$ are all elements
    of~$\CalF\left(\left\{\La_0, \dots, \La_k\right\}\right)$ such that
    for each node~$x$ labeled by~$\La_i$ which has a left child~$y$ labeled
    by~$\La_j$, one has~$i < j$. This set~$\CalS$ of syntax trees admits
    the following regular specification
    \begin{equation}
        \CalS = \Feuille \enspace + \enspace
            \bigsqcup_{0 \leq i \leq k} \CalS_i,
    \end{equation}
    where~$\CalS_i$ is the set of such syntax trees with roots labeled
    by $\La_i$. These sets satisfy the following regular specification
    \begin{equation}
        \begin{split} \CalS_i = \end{split}
        \begin{split}\scalebox{.3}{\begin{tikzpicture}
            \node[Noeud,EtiqClair,minimum size=15mm](1)at(0,0){$\La_i$};
            \node[Feuille](2)at(-1.5,-1.5){};
            \node(3)at(1.5,-1.5){\Huge $\CalS$};
            \draw[Arete](1)--(2);
            \draw[Arete](1)--(3);
        \end{tikzpicture}}\end{split}
        \enspace + \enspace
        \bigsqcup_{i + 1 \leq j \leq k}
        \begin{split}\scalebox{.3}{\begin{tikzpicture}
            \node[Noeud,EtiqClair,minimum size=15mm](1)at(0,0){$\La_i$};
            \node(2)at(-1.5,-1.5){\Huge $\CalS_j$};
            \node(3)at(1.5,-1.5){\Huge $\CalS$};
            \draw[Arete](1)--(2);
            \draw[Arete](1)--(3);
        \end{tikzpicture}}\end{split}.
    \end{equation}
    Hence, the generating series $F(t)$ of $\CalS$ and $F_i(t)$ of $\CalS_i$
    satisfy
    \begin{equation}
        F(t) = t + \sum_{0 \leq i \leq k} F_i(t),
    \end{equation}
    and
    \begin{equation}
        F_i(t) = t F(t) + F(t) \sum_{i + 1 \leq j \leq k} F_j(t).
    \end{equation}
    By basic manipulations involving binomial coefficients, we obtain
    \begin{equation}
        F_i(t) = t F(t) \sum_{0 \leq j \leq k - i} \binom{k - i}{j} {F(t)}^j,
    \end{equation}
    and then,
    \begin{equation} \label{eq:EqFoncKArbres}
        F(t) = t \sum_{0 \leq j \leq k + 1} \binom{k + 1}{j} {F(t)}^j.
    \end{equation}
    The functional equation~\eqref{eq:EqFoncKArbres} is an alternative
    functional equation for the generating series of $k$-leafy trees.
    By Proposition~\ref{prop:MotsFCat}, $F(t)$ also is the Hilbert series
    of~$\FCat{k}$.
    \smallskip

    Hence, by Lemma~\ref{lem:PresentationReecriture},~$\FCat{k}$ admits
    the claimed presentation.
\end{proof}
\medskip

\subsubsection{A ns operad on Schröder trees}
Let~$\Schr$ be the ns suboperad of~$\T \EnsNat$ generated
by~$00$, $01$, and~$10$. The following table shows the first elements
of~$\Schr$.
\begin{center}
    \begin{tabular}{c|p{11cm}}
        Arity & Elements of~$\Schr$ \\ \hline \hline
        $1$ & $0$ \\ \hline
        $2$ & $00$, $01$, $10$ \\ \hline
        $3$ & $000$, $001$, $010$, $011$, $012$, $021$, $100$, $101$,
              $110$, $120$, $210$ \\ \hline
        $4$ & $0000$, $0001$, $0010$, $0011$, $0012$, $0021$, $0100$,
              $0101$, $0110$, $0111$, $0112$, $0120$, $0121$, $0122$,
              $0123$, $0132$, $0210$, $0211$, $0212$, $0221$, $0231$,
              $0321$, $1000$, $1001$, $1010$, $1011$, $1012$, $1021$,
              $1100$, $1101$, $1110$, $1120$, $1200$, $1201$, $1210$,
              $1220$, $1230$, $1320$, $2100$, $2101$, $2110$, $2120$,
              $2210$, $2310$, $3210$
    \end{tabular}
\end{center}
\medskip

Since~$\FCat{1}$ is generated by~$00$ and~$01$,~$\FCat{1}$ is a ns suboperad
of~$\Schr$. Moreover, since~$\MT$ is, by Proposition~\ref{prop:GenerationMT},
generated as an operad by~$00$ and~$01$,~$\Schr$ is a ns suboperad
of~$\MT$.
\medskip

One has the following characterization of the elements of~$\Schr$:
\begin{Proposition} \label{prop:MotsSchr}
    The elements of~$\Schr$ are exactly the words~$x$ on the alphabet~$\EnsNat$
    having at least one occurrence of~$0$ and, for all letter~$\Lb \geq 1$
    of~$x$, there exists a letter~$\La := \Lb - 1$ such that~$x$ has a
    factor~$\La u \Lb$ or~$\Lb u \La$ where~$u$ is a word consisting in
    letters~$\Lc$ satisfying~$\Lc \geq \Lb$.
\end{Proposition}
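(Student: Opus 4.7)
The plan is to prove the characterization by induction on word length in both directions, following the pattern of Propositions \ref{prop:MotsAPE} and \ref{prop:MotsFCat}. In one direction, I show that the stated condition is preserved under composition with the generators $00$, $01$, $10$; in the other, I show that any word satisfying the condition admits a decomposition $x = y \circ_i g$ with $g \in \{00, 01, 10\}$ and $y$ still satisfying the condition.

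For the forward direction, I invoke Lemma \ref{lem:GenerationOpNSEns} to write a non-unit element $x \in \Schr$ as $y \circ_i g$, then apply the induction hypothesis to $y$. The graft replaces the letter $y_i$ of $y$ by $(y_i, y_i)$, $(y_i, y_i + 1)$, or $(y_i + 1, y_i)$ according to $g$. The value $y_i$ itself survives in $x$, so the zero of $y$ is preserved. Any freshly inserted letter $y_i + 1$ has the adjacent $y_i = (y_i + 1) - 1$ as immediate witness (with empty $u$). For each previously existing occurrence of a letter $\Lb' \geq 1$ of $y$, the witness factor in $y$ remains a valid witness in $x$: if the factor avoids position $i$ of $y$, it is unchanged; if it contains $i$ in its interior $u$, the expansion of $y_i$ inserts values in $\{y_i, y_i + 1\}$ that are both $\geq y_i \geq \Lb'$; and if $y_i$ was the $\La'$ endpoint of the witness (so $y_i = \Lb' - 1$), then $\La'$ still appears at position $i$ or $i + 1$ in $x$ and serves as the endpoint after a harmless shift.

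For the backward direction, let $\Lb$ be the maximum letter of $x$. If $\Lb = 0$, any decomposition $x = y \circ_j 00$ works. Otherwise let $i^*$ be the position of the rightmost occurrence of $\Lb$. Since $\Lb$ is the maximum of $x$, a valid $u$ in any witness factor involving a $\Lb$ must consist of letters equal to $\Lb$; since $i^*$ is rightmost, the witness for the $\Lb$ at $i^*$ is then forced into exactly one of three shapes, yielding respectively $x = y \circ_{i^*} 10$ when $x_{i^* + 1} = \Lb - 1$, $x = y \circ_{i^* - 1} 01$ when $x_{i^* - 1} = \Lb - 1$, or $x = y \circ_{i^* - 1} 00$ when $x_{i^* - 1} = \Lb$. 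In each case $y$ is obtained from $x$ by deleting position $i^*$, and the induction hypothesis applies once $y$ is shown to satisfy the condition.

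The main obstacle is precisely this last verification, that $y$ still satisfies the condition. The existence of a zero in $y$ is immediate since one deletes a letter of value $\Lb \geq 1$. For the witness condition, consider an occurrence of $\Lb' \geq 1$ at a position $p \neq i^*$ of $x$, with witness factor having $\Lb'$ at $p$ and $\La' = \Lb' - 1$ at some $q$. Neither endpoint can equal $i^*$: the case $p = i^*$ is excluded by construction, and $q = i^*$ would force $\La' = x_{i^*} = \Lb$ and hence $\Lb' = \Lb + 1$, contradicting the maximality of $\Lb$. Therefore $i^*$, if present in the factor at all, lies in the interior $u$, where letters are required to be $\geq \Lb'$; deleting the entry of value $\Lb \geq \Lb'$ at position $i^*$ keeps the constraint on $u$, and the witness factor remains valid in $y$.
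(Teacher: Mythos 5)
Your proof is correct and follows essentially the same route as the paper: induction on length in both directions, using Lemma~\ref{lem:GenerationOpNSEns} to write $x = y \circ_i g$ in the forward direction, and peeling off an occurrence of the maximal letter via a factor of type $\La\Lb$, $\Lb\La$, or $\Lb\Lb$ in the backward direction. The only differences are cosmetic: you anchor the peeling at the rightmost occurrence of the maximal letter rather than at an arbitrary such adjacent pair, and you spell out the verification that the characterizing condition survives the deletion (and, in the forward direction, the letter duplication), a check the paper's proof asserts in one line.
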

\begin{proof}
    Let us first show by induction on the length of the words that any
    word~$x$ of~$\Schr$ satisfies the statement. This is true
    when~$|x| = 1$. When~$|x| \geq 2$, by Lemma~\ref{lem:GenerationOpNSEns},
    there is a element~$y$ of~$\Schr$ of length~$n := |x| - 1$, an integer~$i \in [n]$,
    and $g \in \{00, 01, 10\}$ such that~$x = y \circ_i g$. Then,
    \begin{equation}
        x = (y_1, \dots,
        y_{i - 1}, y_i + g_1, y_i + g_2, y_{i + 1}, \dots, y_{n - 1}).
    \end{equation}
    Since by induction hypothesis~$y$ satisfies the statement and~$x_i = y_i$
    or~$x_{i + 1} = y_i$,~$x$ also satisfies the statement.
    \smallskip

    Let us now show by induction on the length of the words that~$\Schr$
    contains any word~$x$ satisfying the statement. This is
    true when~$|x| = 1$. When~$n := |x| \geq 2$, let us observe that
    if~$x$ only consists in letters~$0$,~$\Schr$ contains~$x$ because~$x$
    can be obtained by composing the generator~$00$ with itself. Hence,
    let us assume that~$x$ contains at least one letter different from~$0$.
    Set~$\Lb$ as the greatest letter of~$x$ and~$\La := \Lb - 1$. Since~$\Lb$
    is the greatest letter of~$x$, there is a factor~$x_i x_{i + 1}$ of~$x$
    such that $x_i x_{i + 1} \in \{\Lb\Lb, \Lb\La, \La\Lb\}$. Set
    \begin{equation}
        y := (x_1, \dots, x_{i - 1}, \min \{x_i, x_{i + 1}\},
        x_{i + 2}, \dots, x_n),
    \end{equation}
    and~$g$ as the generator~$00$ if~$x_i = x_{i + 1}$, as~$01$ if
    $x_i = x_{i + 1} - 1$, or as~$10$ when $x_i = x_{i + 1} + 1$. Then,
    we have~$x = y \circ_i g$, and, since~$y$ is obtained from~$x$ by
    removing one of its greatest letter,~$y$ satisfies the statement.
    By induction hypothesis,~$\Schr$ contains~$y$, and since~$x = y \circ_i g$,
    $\Schr$ also contains~$x$.
\end{proof}
\medskip

A {\em Schröder tree} is a planar rooted tree such that no node has
exactly one child. The size~$|T|$ of a Schröder tree~$T$ is its number of
leaves. There is a bijection~$\phi_\Schr$ between the words of~$\Schr$
of arity~$n$ and Schröder trees of size~$n$.
\medskip

To compute~$\phi_\Schr(x)$ where~$x$ is an element of~$\Schr$, factorize~$x$
as~\mbox{$x = x^{(1)} \La \dots \La x^{(\ell)}$} where~$\La$ is the smallest
letter occurring in~$x$ and the $x^{(i)}$ are factors of~$x$ without~$\La$.
Then, set
\begin{equation}
    \phi_\Schr(x) :=
    \begin{cases}
        \Feuille & \mbox{if $x = \epsilon$}, \\
        \ArbCons\left(\phi_\Schr\left(x^{(1)}\right), \dots,
        \phi_\Schr\left(x^{(\ell)}\right)\right) & \mbox{otherwise},
    \end{cases}
\end{equation}
where~$\epsilon$ denotes the empty word and $\ArbCons(T_1, \dots, T_\ell)$ is
the Schröder tree consisting in a root that has~$T_1, \dots, T_\ell$ as subtrees
from left to right.
\medskip

The inverse bijection is computed as follows. Given a Schröder tree~$T$,
one computes an element of~$\Schr$ by considering each internal node~$s$
and two adjacent consecutive edges of~$s$ and by assigning to these the
depth of~$s$. The element of~$\Schr$ is obtained by reading the labels
from left to right.
\medskip

Since the elements of~$\Schr$ satisfy
Proposition~\ref{prop:MotsSchr},~$\phi_\Schr$ is well-defined.
Figure~\ref{fig:BijSchrMots} shows an example of this bijection.
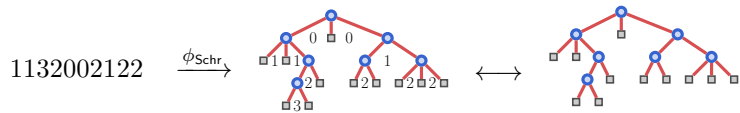
\begin{figure}[ht]
    \centering
    \begin{equation*}
        \begin{split}
            1132002122
            \quad \xrightarrow{\phi_\Schr} \quad
        \end{split}
        \begin{split}\scalebox{.3}{\begin{tikzpicture}
            \node[NoeudSr](n1)at(0,0){};
            \node[NoeudSr](n2)at(-2,-1){};
            \node[NoeudSr](n3)at(-1,-2){};
            \node[NoeudSr](n4)at(-1.5,-3){};
            \node[NoeudSr](n5)at(2.5,-1){};
            \node[NoeudSr](n6)at(1.5,-2){};
            \node[NoeudSr](n7)at(4,-2){};
            \node[Feuille](f1)at(-3,-2){};
            \node[Feuille](f2)at(-2,-2){};
            \node[Feuille](f3)at(-2,-4){};
            \node[Feuille](f4)at(-1,-4){};
            \node[Feuille](f5)at(-.5,-3){};
            \node[Feuille](f6)at(0,-1){};
            \node[Feuille](f7)at(1,-3){};
            \node[Feuille](f8)at(2,-3){};
            \node[Feuille](f9)at(3,-3){};
            \node[Feuille](f10)at(4,-3){};
            \node[Feuille](f11)at(5,-3){};
            \node[left of=f2,node distance=.5cm]{\Huge $1$};
            \node[right of=f2,node distance=.5cm]{\Huge $1$};
            \node[right of=f3,node distance=.5cm]{\Huge $3$};
            \node[left of=f5,node distance=.5cm]{\Huge $2$};
            \node[left of=f6,node distance=.8cm]{\Huge $0$};
            \node[right of=f6,node distance=.8cm]{\Huge $0$};
            \node[right of=f7,node distance=.5cm]{\Huge $2$};
            \node[right of=n6,node distance=1cm]{\Huge $1$};
            \node[right of=f9,node distance=.5cm]{\Huge $2$};
            \node[right of=f10,node distance=.5cm]{\Huge $2$};
            \draw[Arete](n1)--(n2);
            \draw[Arete](n2)--(n3);
            \draw[Arete](n3)--(n4);
            \draw[Arete](n1)--(n5);
            \draw[Arete](n5)--(n6);
            \draw[Arete](n5)--(n7);
            \draw[Arete](n1)--(f6);
            \draw[Arete](n2)--(f1);
            \draw[Arete](n2)--(f2);
            \draw[Arete](n3)--(f5);
            \draw[Arete](n4)--(f3);
            \draw[Arete](n4)--(f4);
            \draw[Arete](n6)--(f7);
            \draw[Arete](n6)--(f8);
            \draw[Arete](n7)--(f9);
            \draw[Arete](n7)--(f10);
            \draw[Arete](n7)--(f11);
        \end{tikzpicture}}
        \end{split}
        \begin{split} \quad \longleftrightarrow \quad \end{split}
        \begin{split}\scalebox{.3}{\begin{tikzpicture}
            \node[NoeudSr](n1)at(0,0){};
            \node[NoeudSr](n2)at(-2,-1){};
            \node[NoeudSr](n3)at(-1,-2){};
            \node[NoeudSr](n4)at(-1.5,-3){};
            \node[NoeudSr](n5)at(2.5,-1){};
            \node[NoeudSr](n6)at(1.5,-2){};
            \node[NoeudSr](n7)at(4,-2){};
            \node[Feuille](f1)at(-3,-2){};
            \node[Feuille](f2)at(-2,-2){};
            \node[Feuille](f3)at(-2,-4){};
            \node[Feuille](f4)at(-1,-4){};
            \node[Feuille](f5)at(-.5,-3){};
            \node[Feuille](f6)at(0,-1){};
            \node[Feuille](f7)at(1,-3){};
            \node[Feuille](f8)at(2,-3){};
            \node[Feuille](f9)at(3,-3){};
            \node[Feuille](f10)at(4,-3){};
            \node[Feuille](f11)at(5,-3){};
            \draw[Arete](n1)--(n2);
            \draw[Arete](n2)--(n3);
            \draw[Arete](n3)--(n4);
            \draw[Arete](n1)--(n5);
            \draw[Arete](n5)--(n6);
            \draw[Arete](n5)--(n7);
            \draw[Arete](n1)--(f6);
            \draw[Arete](n2)--(f1);
            \draw[Arete](n2)--(f2);
            \draw[Arete](n3)--(f5);
            \draw[Arete](n4)--(f3);
            \draw[Arete](n4)--(f4);
            \draw[Arete](n6)--(f7);
            \draw[Arete](n6)--(f8);
            \draw[Arete](n7)--(f9);
            \draw[Arete](n7)--(f10);
            \draw[Arete](n7)--(f11);
        \end{tikzpicture}}
        \end{split}
    \end{equation*}
    \caption{Interpretation of an element of the ns operad~$\Schr$
    in terms of Schröder trees via the bijection~$\phi_\Schr$.}
    \label{fig:BijSchrMots}
\end{figure}
\medskip

\begin{Theoreme} \label{thm:PresentationSchr}
    The ns operad~$\Schr$ admits the presentation
    \begin{equation}
        \Schr =
        \CalF\left(\left\{\SchrOpA, \SchrOpB, \SchrOpC\right\}\right)/_\equiv,
    \end{equation}
    where $\SchrOpA$, $\SchrOpB$, and $\SchrOpC$ are of arity~$2$,
    and~$\equiv$ is the ns operadic congruence generated by
    \begin{minipage}{.45\textwidth}
    \begin{equation}
        \SchrOpA \circ_1 \SchrOpA
            \: \leftrightarrow \: \SchrOpA \circ_2 \SchrOpA,
    \end{equation}
    \begin{equation}
        \SchrOpB \circ_1 \SchrOpC
            \: \leftrightarrow \: \SchrOpC \circ_2 \SchrOpB,
    \end{equation}
    \begin{equation}
        \SchrOpA \circ_1 \SchrOpB
            \: \leftrightarrow \: \SchrOpA \circ_2 \SchrOpC,
    \end{equation}
    \begin{equation}
        \SchrOpB \circ_1 \SchrOpA
            \: \leftrightarrow \: \SchrOpA \circ_2 \SchrOpB,
    \end{equation}
    \end{minipage}
    \begin{minipage}{.45\textwidth}
    \begin{equation}
        \SchrOpA \circ_1 \SchrOpC
            \: \leftrightarrow \: \SchrOpC \circ_2 \SchrOpA,
    \end{equation}
    \begin{equation}
        \SchrOpB \circ_1 \SchrOpB
            \: \leftrightarrow \: \SchrOpB \circ_2 \SchrOpA,
    \end{equation}
    \begin{equation}
        \SchrOpC \circ_1 \SchrOpA
            \: \leftrightarrow \: \SchrOpC \circ_2 \SchrOpC.
    \end{equation}
    \end{minipage}
\end{Theoreme}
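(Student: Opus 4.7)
The plan is to apply Lemma~\ref{lem:PresentationReecriture}. Condition~(i) is a direct computation in $\T\EnsNat$: for each of the seven relations both sides evaluate to the same word of length three (for instance both sides of the relation involving $\SchrOpB \circ_1 \SchrOpC$ yield $101$, and both sides of the one involving $\SchrOpC \circ_1 \SchrOpA$ yield $110$), and the remaining five cases are analogous.

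For condition~(ii), I need an orientation $\mapsto$ of $\leftrightarrow$ that terminates and has the correct number of normal forms. The uniform choice that sends each $\circ_1$-side to its $\circ_2$-side (as in the proof for $\FCat{k}$) does not suffice: the resulting rewrite system is not confluent and leaves two distinct normal forms of arity~$4$, for instance the trees $\SchrOpC(\SchrOpB(\Feuille, \Feuille), \SchrOpC(\Feuille, \Feuille))$ and $\SchrOpC(\Feuille, \SchrOpC(\SchrOpC(\Feuille, \Feuille), \Feuille))$, that both evaluate to $1210$ in $\Schr$. I therefore orient the first six relations from their $\circ_1$-side to their $\circ_2$-side and reverse the seventh one, setting $\SchrOpC \circ_2 \SchrOpC \to \SchrOpC \circ_1 \SchrOpA$. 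With this orientation, the normal forms are exactly those trees of $\CalF(\{\SchrOpA, \SchrOpB, \SchrOpC\})$ whose $\SchrOpA$- and $\SchrOpB$-labeled internal nodes have a leaf as left child and whose $\SchrOpC$-labeled internal nodes have a right subtree not rooted at $\SchrOpC$.

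Termination cannot be obtained from $\Poids$ alone, since the reversed seventh rule decreases $\Poids$ while the other six increase it by one. I observe that three rules change the multiset of labels (the third rule replaces a $\SchrOpB$-labeled node by a $\SchrOpC$-labeled one, the sixth replaces a $\SchrOpB$ by an $\SchrOpA$, and the reversed seventh replaces a $\SchrOpC$ by an $\SchrOpA$), while the four other rules preserve it. Combining $\Poids$ with the numbers of internal nodes labeled $\SchrOpB$ and $\SchrOpC$ with suitable positive coefficients yields a single statistic that strictly decreases at every rewriting and is bounded below on syntax trees of fixed arity, which proves termination.

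Finally, a regular specification of the normal forms gives $A(t) = B(t) = t\, F(t)$ and $C(t) = F(t)\,(t + A(t) + B(t))$ for the generating series of normal forms rooted at each of the three generators, and substituting into $F(t) = t + A(t) + B(t) + C(t)$ yields the functional equation
\begin{equation}
    2t\, F(t)^2 + (3t - 1)\, F(t) + t = 0,
\end{equation}
which determines the series $t + 3t^2 + 11t^3 + 45t^4 + 197t^5 + \cdots$ of shifted little Schröder numbers, coinciding with $F_\Schr(t)$. Lemma~\ref{lem:PresentationReecriture} then yields the claimed presentation. The main obstacle is pinpointing the non-uniform orientation: the reversal of precisely the seventh relation is forced by the failure of confluence of the uniform choice, and finding a linear combination of $\Poids$ with label counts that makes the resulting mixed rewrite system terminating is the key technical step.
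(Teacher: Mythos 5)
Your route is the paper's: the same appeal to Lemma~\ref{lem:PresentationReecriture}, the same evaluations for condition~({\it i}), the same non-uniform orientation reversing only the seventh relation, the same characterization of the normal forms, and the same functional equation $F(t) = t + 3tF(t) + 2t{F(t)}^2$. Your observation that the uniform $\circ_1$-to-$\circ_2$ orientation fails, witnessed by two normal forms of arity~$4$ both evaluating to~$1210$, is correct and is exactly the right reason to reverse the seventh rule.

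The gap is in the termination argument. As literally stated it is inconsistent: four of the seven rules (those rewriting $\SchrOpA \circ_1 \SchrOpA$, $\SchrOpB \circ_1 \SchrOpC$, $\SchrOpB \circ_1 \SchrOpA$, and $\SchrOpA \circ_1 \SchrOpC$ into their $\circ_2$-sides) leave the multiset of labels unchanged and strictly increase $\Poids$, so no combination of $\Poids$ with the two label counts in which \emph{all} coefficients are positive can strictly decrease; the coefficient of $\Poids$ would have to be negative. More seriously, the weight change of a rewriting performed inside a larger tree is not $\pm 1$: replacing one of these degree-$2$ patterns changes $\Poids$ by $\pm(1 + n_3)$, where $n_3$ is the number of internal nodes of the subtree grafted on the rightmost leaf of the pattern. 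The reversed seventh rule $\SchrOpC \circ_2 \SchrOpC \to \SchrOpC \circ_1 \SchrOpA$ therefore decreases $\Poids$ by an arbitrarily large amount while removing exactly one $\SchrOpC$-labeled node, so no \emph{fixed} linear combination of $\Poids$ and the numbers of $\SchrOpB$- and $\SchrOpC$-labeled nodes decreases under every rewriting in every arity. You can repair this either by letting the coefficients grow with the arity (legitimate, since the rules preserve arity and termination is checked arity by arity), or, as the paper does, by abandoning the single statistic: the pair $\left(k_T, \Poids(T)\right)$, where $k_T$ is the number of internal nodes labeled $\SchrOpA$, strictly \emph{increases} lexicographically under every rewriting (the sixth and seventh rules increase $k_T$, the other five preserve $k_T$ and increase $\Poids$), and both components are bounded for a fixed arity, which gives termination at once.
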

\begin{proof}
    First, note that by replacing~$\SchrOpA$ by~$00 \in \Schr(2)$,
    $\SchrOpB$ by~$01 \in \Schr(2)$, and~$\SchrOpC$ by~$10 \in \Schr(2)$,
    we have~$\Eval(x) = \Eval(y)$ for the seven relations~$x \leftrightarrow y$
    of the statement of the Theorem. Indeed, then seven equivalence classes
    are, respectively, the ones of the elements~$000$, $101$, $010$,
    $001$, $100$, $011$, and~$110$ of~$\Schr$.
    \smallskip

    Consider now the orientation of~$\leftrightarrow$ into the rewrite
    rule~$\mapsto$ defined by

    \begin{minipage}{.45\textwidth}
    \begin{equation}
        \begin{split}\scalebox{.35}{\begin{tikzpicture}
            \node[Feuille](0)at(0.0,-2){};
            \node[Noeud,EtiqClair](1)at(1.0,-1){\scalebox{2}{\SchrOpA}};
            \node[Feuille](2)at(2.0,-2){};
            \draw[Arete](1)--(0);
            \draw[Arete](1)--(2);
            \node[Noeud,EtiqClair](3)at(3.0,0){\scalebox{2}{\SchrOpA}};
            \node[Feuille](4)at(4.0,-1){};
            \draw[Arete](3)--(1);
            \draw[Arete](3)--(4);
        \end{tikzpicture}}\end{split}
        \begin{split}\enspace \mapsto \enspace \end{split}
        \begin{split}\scalebox{.35}{\begin{tikzpicture}
            \node[Feuille](0)at(0.0,-1){};
            \node[Noeud,EtiqClair](1)at(1.0,0){\scalebox{2}{\SchrOpA}};
            \node[Feuille](2)at(2.0,-2){};
            \node[Noeud,EtiqClair](3)at(3.0,-1){\scalebox{2}{\SchrOpA}};
            \node[Feuille](4)at(4.0,-2){};
            \draw[Arete](3)--(2);
            \draw[Arete](3)--(4);
            \draw[Arete](1)--(0);
            \draw[Arete](1)--(3);
        \end{tikzpicture}}\end{split},
    \end{equation}
    \begin{equation}
        \begin{split}\scalebox{.35}{\begin{tikzpicture}
            \node[Feuille](0)at(0.0,-2){};
            \node[Noeud,EtiqClair](1)at(1.0,-1){\scalebox{2}{\SchrOpC}};
            \node[Feuille](2)at(2.0,-2){};
            \draw[Arete](1)--(0);
            \draw[Arete](1)--(2);
            \node[Noeud,EtiqClair](3)at(3.0,0){\scalebox{2}{\SchrOpB}};
            \node[Feuille](4)at(4.0,-1){};
            \draw[Arete](3)--(1);
            \draw[Arete](3)--(4);
        \end{tikzpicture}}\end{split}
        \begin{split}\enspace \mapsto \enspace \end{split}
        \begin{split}\scalebox{.35}{\begin{tikzpicture}
            \node[Feuille](0)at(0.0,-1){};
            \node[Noeud,EtiqClair](1)at(1.0,0){\scalebox{2}{\SchrOpC}};
            \node[Feuille](2)at(2.0,-2){};
            \node[Noeud,EtiqClair](3)at(3.0,-1){\scalebox{2}{\SchrOpB}};
            \node[Feuille](4)at(4.0,-2){};
            \draw[Arete](3)--(2);
            \draw[Arete](3)--(4);
            \draw[Arete](1)--(0);
            \draw[Arete](1)--(3);
        \end{tikzpicture}}\end{split},
    \end{equation}
    \begin{equation}
        \begin{split}\scalebox{.35}{\begin{tikzpicture}
            \node[Feuille](0)at(0.0,-2){};
            \node[Noeud,EtiqClair](1)at(1.0,-1){\scalebox{2}{\SchrOpB}};
            \node[Feuille](2)at(2.0,-2){};
            \draw[Arete](1)--(0);
            \draw[Arete](1)--(2);
            \node[Noeud,EtiqClair](3)at(3.0,0){\scalebox{2}{\SchrOpA}};
            \node[Feuille](4)at(4.0,-1){};
            \draw[Arete](3)--(1);
            \draw[Arete](3)--(4);
        \end{tikzpicture}}\end{split}
        \begin{split}\enspace \mapsto \enspace \end{split}
        \begin{split}\scalebox{.35}{\begin{tikzpicture}
            \node[Feuille](0)at(0.0,-1){};
            \node[Noeud,EtiqClair](1)at(1.0,0){\scalebox{2}{\SchrOpA}};
            \node[Feuille](2)at(2.0,-2){};
            \node[Noeud,EtiqClair](3)at(3.0,-1){\scalebox{2}{\SchrOpC}};
            \node[Feuille](4)at(4.0,-2){};
            \draw[Arete](3)--(2);
            \draw[Arete](3)--(4);
            \draw[Arete](1)--(0);
            \draw[Arete](1)--(3);
        \end{tikzpicture}}\end{split},
    \end{equation}
    \begin{equation}
        \begin{split}\scalebox{.35}{\begin{tikzpicture}
            \node[Feuille](0)at(0.0,-2){};
            \node[Noeud,EtiqClair](1)at(1.0,-1){\scalebox{2}{\SchrOpA}};
            \node[Feuille](2)at(2.0,-2){};
            \draw[Arete](1)--(0);
            \draw[Arete](1)--(2);
            \node[Noeud,EtiqClair](3)at(3.0,0){\scalebox{2}{\SchrOpB}};
            \node[Feuille](4)at(4.0,-1){};
            \draw[Arete](3)--(1);
            \draw[Arete](3)--(4);
        \end{tikzpicture}}\end{split}
        \begin{split}\enspace \mapsto \enspace \end{split}
        \begin{split}\scalebox{.35}{\begin{tikzpicture}
            \node[Feuille](0)at(0.0,-1){};
            \node[Noeud,EtiqClair](1)at(1.0,0){\scalebox{2}{\SchrOpA}};
            \node[Feuille](2)at(2.0,-2){};
            \node[Noeud,EtiqClair](3)at(3.0,-1){\scalebox{2}{\SchrOpB}};
            \node[Feuille](4)at(4.0,-2){};
            \draw[Arete](3)--(2);
            \draw[Arete](3)--(4);
            \draw[Arete](1)--(0);
            \draw[Arete](1)--(3);
        \end{tikzpicture}}\end{split},
    \end{equation}
    \end{minipage}
    \begin{minipage}{.45\textwidth}
    \begin{equation}
        \begin{split}\scalebox{.35}{\begin{tikzpicture}
            \node[Feuille](0)at(0.0,-2){};
            \node[Noeud,EtiqClair](1)at(1.0,-1){\scalebox{2}{\SchrOpC}};
            \node[Feuille](2)at(2.0,-2){};
            \draw[Arete](1)--(0);
            \draw[Arete](1)--(2);
            \node[Noeud,EtiqClair](3)at(3.0,0){\scalebox{2}{\SchrOpA}};
            \node[Feuille](4)at(4.0,-1){};
            \draw[Arete](3)--(1);
            \draw[Arete](3)--(4);
        \end{tikzpicture}}\end{split}
        \begin{split}\enspace \mapsto \enspace \end{split}
        \begin{split}\scalebox{.35}{\begin{tikzpicture}
            \node[Feuille](0)at(0.0,-1){};
            \node[Noeud,EtiqClair](1)at(1.0,0){\scalebox{2}{\SchrOpC}};
            \node[Feuille](2)at(2.0,-2){};
            \node[Noeud,EtiqClair](3)at(3.0,-1){\scalebox{2}{\SchrOpA}};
            \node[Feuille](4)at(4.0,-2){};
            \draw[Arete](3)--(2);
            \draw[Arete](3)--(4);
            \draw[Arete](1)--(0);
            \draw[Arete](1)--(3);
        \end{tikzpicture}}\end{split},
    \end{equation}
    \begin{equation}
        \begin{split}\scalebox{.35}{\begin{tikzpicture}
            \node[Feuille](0)at(0.0,-2){};
            \node[Noeud,EtiqClair](1)at(1.0,-1){\scalebox{2}{\SchrOpB}};
            \node[Feuille](2)at(2.0,-2){};
            \draw[Arete](1)--(0);
            \draw[Arete](1)--(2);
            \node[Noeud,EtiqClair](3)at(3.0,0){\scalebox{2}{\SchrOpB}};
            \node[Feuille](4)at(4.0,-1){};
            \draw[Arete](3)--(1);
            \draw[Arete](3)--(4);
        \end{tikzpicture}}\end{split}
        \begin{split}\enspace \mapsto \enspace \end{split}
        \begin{split}\scalebox{.35}{\begin{tikzpicture}
            \node[Feuille](0)at(0.0,-1){};
            \node[Noeud,EtiqClair](1)at(1.0,0){\scalebox{2}{\SchrOpB}};
            \node[Feuille](2)at(2.0,-2){};
            \node[Noeud,EtiqClair](3)at(3.0,-1){\scalebox{2}{\SchrOpA}};
            \node[Feuille](4)at(4.0,-2){};
            \draw[Arete](3)--(2);
            \draw[Arete](3)--(4);
            \draw[Arete](1)--(0);
            \draw[Arete](1)--(3);
        \end{tikzpicture}}\end{split},
    \end{equation}
    \begin{equation}
        \begin{split}\scalebox{.35}{\begin{tikzpicture}
            \node[Feuille](0)at(0.0,-1){};
            \node[Noeud,EtiqClair](1)at(1.0,0){\scalebox{2}{\SchrOpC}};
            \node[Feuille](2)at(2.0,-2){};
            \node[Noeud,EtiqClair](3)at(3.0,-1){\scalebox{2}{\SchrOpC}};
            \node[Feuille](4)at(4.0,-2){};
            \draw[Arete](3)--(2);
            \draw[Arete](3)--(4);
            \draw[Arete](1)--(0);
            \draw[Arete](1)--(3);
        \end{tikzpicture}}\end{split}
        \begin{split}\enspace \mapsto \enspace \end{split}
        \begin{split}\scalebox{.35}{\begin{tikzpicture}
            \node[Feuille](0)at(0.0,-2){};
            \node[Noeud,EtiqClair](1)at(1.0,-1){\scalebox{2}{\SchrOpA}};
            \node[Feuille](2)at(2.0,-2){};
            \draw[Arete](1)--(0);
            \draw[Arete](1)--(2);
            \node[Noeud,EtiqClair](3)at(3.0,0){\scalebox{2}{\SchrOpC}};
            \node[Feuille](4)at(4.0,-1){};
            \draw[Arete](3)--(1);
            \draw[Arete](3)--(4);
        \end{tikzpicture}}\end{split}.
    \end{equation}
    \end{minipage}

    This rewrite rule is terminating. Indeed, let~$T$ be an element
    of~$\CalF\left(\left\{\SchrOpA, \SchrOpB, \SchrOpC\right\}\right)$.
    By associating with~$T$ the pair~$(k_T, \Poids(T))$ where~$k_T$ is the
    number of nodes labeled by~$\SchrOpA$ in~$T$, it is plain that for any
    rewriting~$T_0 \to T_1$, one has $k_{T_0} < k_{T_1}$
    or $k_{T_0} = k_{T_1}$ and $\Poids(T_0) < \Poids(T_1)$.
    \smallskip

    Moreover, the normal forms of~$\mapsto$ are all elements
    of~$\CalF\left(\left\{\SchrOpA, \SchrOpB, \SchrOpC\right\}\right)$
    such that nodes labeled by~$\SchrOpA$ or~$\SchrOpB$ have no left
    child and nodes labeled by~$\SchrOpC$ have no right child labeled
    by~$\SchrOpC$. This set~$\CalS$ of syntax trees admits the following
    regular specification
    \begin{equation}\begin{split}
        \begin{split} \CalS = \Feuille \end{split}
        \enspace + \enspace
        \begin{split}\scalebox{.3}{\begin{tikzpicture}
            \node[Noeud,EtiqClair,minimum size=15mm](1)at(0,0)
                {\scalebox{2}{\SchrOpA}};
            \node[Feuille](2)at(-1.5,-1.5){};
            \node(3)at(1.5,-1.5){\Huge $\CalS$};
            \draw[Arete](1)--(2);
            \draw[Arete](1)--(3);
        \end{tikzpicture}}\end{split}
        \enspace + \enspace
        \begin{split}\scalebox{.3}{\begin{tikzpicture}
            \node[Noeud,EtiqClair,minimum size=15mm](1)at(0,0)
                {\scalebox{2}{\SchrOpB}};
            \node[Feuille](2)at(-1.5,-1.5){};
            \node(3)at(1.5,-1.5){\Huge $\CalS$};
            \draw[Arete](1)--(2);
            \draw[Arete](1)--(3);
        \end{tikzpicture}}\end{split}
        \enspace + \enspace
        \begin{split}\scalebox{.3}{\begin{tikzpicture}
            \node[Noeud,EtiqClair,minimum size=15mm](1)at(0,0)
                {\scalebox{2}{\SchrOpC}};
            \node[Feuille](2)at(1.5,-1.5){};
            \node(3)at(-1.5,-1.5){\Huge $\CalS$};
            \draw[Arete](1)--(2);
            \draw[Arete](1)--(3);
        \end{tikzpicture}}\end{split}
        \enspace + \enspace
        \begin{split}\scalebox{.3}{\begin{tikzpicture}
            \node[Noeud,EtiqClair,minimum size=15mm](1)at(0,0)
                {\scalebox{2}{\SchrOpC}};
            \node(2)at(-1.5,-1.5){\Huge $\CalS$};
            \node[Noeud,EtiqClair,minimum size=15mm](3)at(2,-2)
                {\scalebox{2}{\SchrOpA}};
            \node[Feuille](4)at(1,-3.5){};
            \node(5)at(3,-3.5){\Huge $\CalS$};
            \draw[Arete](1)--(2);
            \draw[Arete](1)--(3);
            \draw[Arete](3)--(4);
            \draw[Arete](3)--(5);
        \end{tikzpicture}}\end{split}
        \enspace + \enspace
        \begin{split}\scalebox{.3}{\begin{tikzpicture}
            \node[Noeud,EtiqClair,minimum size=15mm](1)at(0,0)
                {\scalebox{2}{\SchrOpC}};
            \node(2)at(-1.5,-1.5){\Huge $\CalS$};
            \node[Noeud,EtiqClair,minimum size=15mm](3)at(2,-2)
                {\scalebox{2}{\SchrOpB}};
            \node[Feuille](4)at(1,-3.5){};
            \node(5)at(3,-3.5){\Huge $\CalS$};
            \draw[Arete](1)--(2);
            \draw[Arete](1)--(3);
            \draw[Arete](3)--(4);
            \draw[Arete](3)--(5);
        \end{tikzpicture}}\end{split}.
    \end{split}\end{equation}
    Hence, the generating series~$F(t)$ of~$\CalS$ satisfies
    \begin{equation}
        F(t) = t + 3 t F(t) + 2 t {F(t)}^2,
    \end{equation}
    that is the generating series of Schröder trees. By
    Proposition~\ref{prop:MotsSchr},~$F(t)$ also is the Hilbert series
    of~$\Schr$.
    \smallskip

    Hence, by Lemma~\ref{lem:PresentationReecriture},~$\Schr$ admits
    the claimed presentation.
\end{proof}
\medskip

\subsubsection{A ns operad on Motzkin words}
Let~$\Motz$ be the ns suboperad of~$\T \EnsNat$ generated
by~$00$ and~$010$. The following table shows the first elements of~$\Motz$.
\begin{center}
    \begin{tabular}{c|p{11cm}}
        Arity & Elements of~$\Motz$ \\ \hline \hline
        $1$ & $0$ \\ \hline
        $2$ & $00$ \\ \hline
        $3$ & $000$, $010$ \\ \hline
        $4$ & $0000$, $0010$, $0100$, $0110$ \\ \hline
        $5$ & $00000$, $00010$, $00100$, $00110$, $01000$, $01010$,
              $01100$, $01110$, $01210$ \\ \hline
        $6$ & $000000$, $000010$, $000100$, $000110$, $001000$, $001010$,
              $001100$, $001110$, $001210$, $010000$, $010010$, $010100$,
              $010110$, $011000$, $011010$, $011100$, $011110$, $011210$,
              $012100$, $012110$, $012210$
    \end{tabular}
\end{center}
\medskip

Since~$00$ and~$01$ generate~$\FCat{1}$ and since~$010 = 00 \circ_1 01$,
$\Motz$ is a ns suboperad of~$\FCat{1}$. Moreover, since~$\FCat{0}$ is
generated by~$00$, $\FCat{0}$ is a ns suboperad of~$\Motz$.
\medskip

One has the following characterization of the elements of~$\Motz$:
\begin{Proposition} \label{prop:ElemMotz}
    The elements of~$\Motz$ are exactly the words~$x$ on the
    alphabet~$\EnsNat$ beginning and start by~$0$ and such
    that~$|x_i - x_{i + 1}| \leq 1$ for all~$i \in [|x| - 1]$.
\end{Proposition}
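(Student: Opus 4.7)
The plan is to proceed by a double induction on the length of the words, mirroring the strategy used in the proofs of Propositions~\ref{prop:MotsAPE}, \ref{prop:MotsFCat}, and~\ref{prop:MotsSchr}. I read the statement as asserting that the elements of~$\Motz$ are exactly the nonempty words~$x$ on~$\EnsNat$ satisfying $x_1 = 0$, $x_{|x|} = 0$, and $|x_i - x_{i+1}| \leq 1$ for all $i \in [|x|-1]$, which matches the tabulated small elements.

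For the forward direction, I would show by induction on $|x|$ that every $x \in \Motz$ satisfies these conditions. The base case $|x| = 1$ is immediate since $x = 0$. For the inductive step, Lemma~\ref{lem:GenerationOpNSEns} provides $y \in \Motz$ of strictly smaller arity and $g \in \{00, 010\}$ with $x = y \circ_i g$; a direct computation from the definition of $\circ_i$ in~$\T \EnsNat$ shows that inserting $00$ or $010$ at position~$i$ of a Motzkin-like word~$y$ produces a word that still begins and ends with~$0$ (since $y_1$ and $y_{|y|}$ are preserved, noting that when $i = 1$ or $i = |y|$ the inserted factor itself begins or ends with $y_i$) and whose new adjacencies, namely $(y_i, y_i)$ or $(y_i, y_i + 1, y_i)$, respect the step condition.

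For the converse, I would show by induction on $|x|$ that any word satisfying the conditions lies in~$\Motz$. The base case is $x = (0) = \Unite$. For the inductive step I would distinguish two cases. If $x$ contains a factor $x_i x_{i+1}$ with $x_i = x_{i+1}$, set $y := (x_1, \dots, x_i, x_{i+2}, \dots, x_{|x|})$; the new adjacency $(x_i, x_{i+2})$ is still valid because $|x_i - x_{i+2}| = |x_{i+1} - x_{i+2}| \leq 1$, so~$y$ satisfies the conditions and $x = y \circ_i 00$ by a direct check. Otherwise all consecutive letters of $x$ differ, so the steps of $x$ are all of the form $\pm 1$; since $x$ takes non-negative values with $x_1 = x_{|x|} = 0$, its maximum $M \geq 1$ is reached at some interior index~$i$, which forces $x_{i-1} = x_i - 1 = x_{i+1}$. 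Setting $y := (x_1, \dots, x_{i-1}, x_{i+2}, \dots, x_{|x|})$ removes the peak factor $x_i x_{i+1}$; one then verifies $x = y \circ_{i-1} 010$ from $y_{i-1} = x_{i-1}$, and the induction hypothesis applies to~$y$.

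The main subtlety is the second case of the backward induction: establishing the existence of a peak from the combined constraints (only $\pm 1$ steps, non-negative values, vanishing endpoints) and then verifying that the deletion leaves a word that still begins and ends with~$0$, which requires a short case analysis depending on whether $i + 1 = |x|$ (in that case one uses $x_{i+1} = x_{|x|} = 0$ to deduce $a = 0$, so the truncated word still ends with~$0$). The rest of the argument is index bookkeeping aligning with the definition of the grafting map of~$\T \EnsNat$.
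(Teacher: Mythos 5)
Your proof is correct and follows essentially the same route as the paper: the forward direction by induction via Lemma~\ref{lem:GenerationOpNSEns}, and the converse by removing either a repeated letter (grafting $00$) or a peak at the maximal value (grafting $010$), which is exactly the paper's case split. Your added justification for why a peak must exist, and the endpoint bookkeeping, only make explicit what the paper leaves implicit.
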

\begin{proof}
    Let us first show by induction on the length of the words that any
    word~$x$ of~$\Motz$ satisfies the statement. This is true
    when~$|x| = 1$. When~$|x| \geq 2$, by
    Lemma~\ref{lem:GenerationOpNSEns}, there is an element~$y$
    of~$\Motz$ of length~$n < |x|$, an integer~$i \in [n]$, and~$g \in \{00, 010\}$
    such that~$x = y \circ_i g$. If~$g = 00$, then one has
    \begin{equation}
        x = (y_1, \dots, y_{i - 1}, y_i, y_i, y_{i + 1}, \dots, y_n).
    \end{equation}
    Otherwise, we have~$g = 010$ and
    \begin{equation}
        x = (y_1, \dots, y_{i - 1}, y_i, y_i + 1, y_i, y_{i + 1},
        \dots, y_n).
    \end{equation}
    Since by induction hypothesis~$y$ satisfies the statement, it is
    immediate that in both cases,~$x$ also satisfies the statement.
    \smallskip

    Let us now show by induction on the length of the words that~$\Motz$
    contains any word~$x$ satisfying the statement. This is
    true when~$|x| = 1$. When~$n := |x| \geq 2$, one has two cases to
    consider. If~$x$ contains a factor~$x_i x_{i + 1}$ such
    that~$x_i = x_{i + 1}$, by setting
    \begin{equation}
        y := (x_1, \dots, x_i, x_{i + 2}, \dots, x_n),
    \end{equation}
    we have~$x = y \circ_i 00$. Otherwise, let~$\Lb$ be the greatest
    letter of~$x$. Since~$x$ satisfies the statement, there is in~$x$
    a factor~$x_i x_{i + 1} x_{i + 2}$ where~$x_{i + 1} = \Lb$
    and~$x_i = x_{i + 2} = \Lb - 1$. By setting
    \begin{equation}
        y := (x_1, \dots, x_i, x_{i + 3}, \dots, x_n),
    \end{equation}
    we have~$x = y \circ_i 010$. Now, for both cases, since by induction
    hypothesis,~$\Motz$ contains~$y$,~$\Motz$ also contains~$x$.
\end{proof}
\medskip

A {\em Motzkin word} is a word~$u$ on the
alphabet~$\left\{-1, 0, 1\right\}$ such that the sum of all letters of~$u$
is~$0$ and, for any prefix~$u'$ of~$u$, the sum of all letters of~$u'$
is a nonnegative integer. The size~$|u|$ of a Motzkin word~$u$ is its
length plus one. In the sequel, we shall denote by~$\bar 1$ the
letter~$-1$. We can represent a Motzkin word~$u$ graphically by a
{\em Motzkin path} that is the path in~$\EnsNat^2$ connecting the
points~$(0, 0)$ and~$(n, 0)$ obtained by drawing a step~$(1, -1)$
(resp.~$(1, 0)$, $(1, 1)$) for each letter~$\bar 1$ (resp.~$0$, $1$) of~$u$.
There is a bijection~$\phi_\Motz$ between the words of~$\Motz$ of arity~$n$
and Motzkin words of size~$n$.
\medskip

To compute~$\phi_\Motz(x)$ where~$x$ is an element of~$\Motz(n)$, build
the word~$u$ of length~$n - 1$ satisfying~$u_i := x_{i + 1} - x_i$
for all~$i \in [n - 1]$.
\medskip

The inverse bijection is computed as follows. The element of~$\Motz$ in
bijection with a Motzkin word~$u$ is the word~$x$ such that~$x_i$ is the
sum of the letters of the prefix~$u_1 \dots u_{i - 1}$  of~$u$,
for all~$i \in [n]$.
\medskip

Since the elements of~$\Motz$ satisfy
Proposition~\ref{prop:ElemMotz},~$\phi_\Motz$ is well-defined.
Figure~\ref{fig:BijMotzMots} shows an example of this bijection.
\begin{figure}[ht]
    \centering
    \begin{equation*}
    \begin{split} 001123221010 \quad \xrightarrow{\phi_\Motz} \quad \end{split}
    \begin{split} 0 1 0 1 1 \bar1 0 \bar1 \bar1 1 \bar1 \end{split}
    \quad \longleftrightarrow \quad
    \begin{split}
    \scalebox{.3}{{\begin{tikzpicture}
        \draw[Grille] (0,0) grid (11,3);
        \node[NoeudDyck](0)at(0,0){};
        \node[NoeudDyck](1)at(1,0){};
        \node[NoeudDyck](2)at(2,1){};
        \node[NoeudDyck](3)at(3,1){};
        \node[NoeudDyck](4)at(4,2){};
        \node[NoeudDyck](5)at(5,3){};
        \node[NoeudDyck](6)at(6,2){};
        \node[NoeudDyck](7)at(7,2){};
        \node[NoeudDyck](8)at(8,1){};
        \node[NoeudDyck](9)at(9,0){};
        \node[NoeudDyck](10)at(10,1){};
        \node[NoeudDyck](11)at(11,0){};
        \draw[PasDyck](0)--(1);
        \draw[PasDyck](1)--(2);
        \draw[PasDyck](2)--(3);
        \draw[PasDyck](3)--(4);
        \draw[PasDyck](4)--(5);
        \draw[PasDyck](5)--(6);
        \draw[PasDyck](6)--(7);
        \draw[PasDyck](7)--(8);
        \draw[PasDyck](8)--(9);
        \draw[PasDyck](9)--(10);
        \draw[PasDyck](10)--(11);
    \end{tikzpicture}}}
    \end{split}
    \end{equation*}
    \caption{Interpretation of an element of the ns operad~$\Motz$
    in terms of Motzkin words and Motzkin paths via the
    bijection~$\phi_\Motz$.}
    \label{fig:BijMotzMots}
\end{figure}
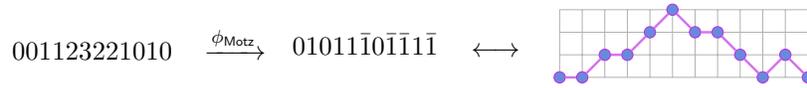
\medskip

The bijection~$\phi_\Motz$ between elements of $\Motz$ and Motzkin words
offers an alternative way to compute the composition of elements of~$\Motz$:
\begin{Proposition} \label{prop:SubsMotz}
    Let~$u$ and~$v$ be two Motzkin words where~$u$ is of size~$n$,
    and~$i \in [n]$ be an integer. Then the composition~$u \circ_i v$ in~$\Motz$
    amounts to insert~$v$ at the~$i$th position into~$u$.
\end{Proposition}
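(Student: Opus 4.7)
The plan is to unwind the bijection $\phi_\Motz$ and to verify the statement by a direct computation of successive differences. Let $x := \phi_\Motz^{-1}(u) \in \Motz(n)$ and $y := \phi_\Motz^{-1}(v) \in \Motz(m)$, so that $u_j = x_{j+1} - x_j$ for $j \in [n-1]$ and analogously for $v$. By Proposition~\ref{prop:ElemMotz}, both $x$ and $y$ start and end with~$0$; in particular, $y_1 = y_m = 0$.

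The first step is to expand $x \circ_i y$ using the formula~\eqref{eq:TSub} for the grafting map in~$\T \EnsNat$. Since $y_1 = 0 = y_m$, the resulting word reads
\begin{equation*}
x \circ_i y = (x_1, \dots, x_{i-1}, x_i, x_i + y_2, \dots, x_i + y_{m-1}, x_i, x_{i+1}, \dots, x_n),
\end{equation*}
where the two occurrences of the value~$x_i$, at positions~$i$ and~$i + m - 1$, come precisely from the boundary conditions $y_1 = y_m = 0$.

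The second step is to apply~$\phi_\Motz$ to this word by taking consecutive differences. The differences of~$x$ strictly before position~$i$ contribute $u_1, \dots, u_{i-1}$; the internal differences of the translated copy of~$y$ contribute $y_2 - y_1, y_3 - y_2, \dots, y_m - y_{m-1}$, which are exactly $v_1, \dots, v_{m-1}$; and the differences of~$x$ from position~$i$ onwards contribute $u_i, \dots, u_{n-1}$. Hence
\begin{equation*}
\phi_\Motz(x \circ_i y) = (u_1, \dots, u_{i-1}, v_1, \dots, v_{m-1}, u_i, \dots, u_{n-1}),
\end{equation*}
which is exactly the word obtained by inserting~$v$ at the $i$th position of~$u$. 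The only real subtlety lies in the boundary conditions $y_1 = y_m = 0$: these are what ensure that the junctions between the prefix of~$x$, the translated copy of~$y$, and the suffix of~$x$ are compatible with the Motzkin constraint $|u_j| \leq 1$, and so they underpin the very fact that~$\Motz$ is closed under the grafting maps inherited from~$\T \EnsNat$. The boundary cases $i = 1$ and $i = n$ present no additional difficulty and are handled by direct inspection.
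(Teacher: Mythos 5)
Your proof is correct and follows the same route as the paper's: the paper's own argument simply observes that the preimage $y = \phi_\Motz^{-1}(v)$ starts and ends with~$0$ (by Proposition~\ref{prop:ElemMotz}) and leaves the rest implicit, whereas you have written out the difference computation in full. The expansion of $x \circ_i y$ and the resulting identification of $\phi_\Motz(x \circ_i y)$ with $(u_1, \dots, u_{i-1}, v_1, \dots, v_{m-1}, u_i, \dots, u_{n-1})$ are both accurate.
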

\begin{proof}
    Let~$y$ be the element of~$\Motz$ in bijection by~$\phi_\Motz$
    with~$v$. The statement is a direct consequence of the fact that, by
    Proposition~\ref{prop:ElemMotz},~$y$ starts and ends by~$0$.
\end{proof}
\medskip

Figure~\ref{fig:SubsMotz} shows an example of composition in $\Motz$.
\begin{figure}[ht]
    \centering
    \begin{equation*}
        \begin{split}\scalebox{.3}{\begin{tikzpicture}
            \draw[Grille] (0,0) grid (7,3);
            \node[NoeudDyck](0)at(0,0){};
            \node[NoeudDyck](1)at(1,1){};
            \node[NoeudDyck](2)at(2,1){};
            \node[NoeudDyck,Marque2](3)at(3,2){};
            \node[NoeudDyck](4)at(4,3){};
            \node[NoeudDyck](5)at(5,2){};
            \node[NoeudDyck](6)at(6,1){};
            \node[NoeudDyck](7)at(7,0){};
            \draw[PasDyck](0)--(1);
            \draw[PasDyck](1)--(2);
            \draw[PasDyck](2)--(3);
            \draw[PasDyck](3)--(4);
            \draw[PasDyck](4)--(5);
            \draw[PasDyck](5)--(6);
            \draw[PasDyck](6)--(7);
        \end{tikzpicture}}\end{split}
        \enspace \circ_4 \enspace
        \begin{split}\scalebox{.3}{\begin{tikzpicture}
            \draw[Grille] (0,0) grid (6,2);
            \node[NoeudDyck,Marque1](0)at(0,0){};
            \node[NoeudDyck,Marque1](1)at(1,1){};
            \node[NoeudDyck,Marque1](2)at(2,2){};
            \node[NoeudDyck,Marque1](3)at(3,2){};
            \node[NoeudDyck,Marque1](4)at(4,1){};
            \node[NoeudDyck,Marque1](5)at(5,1){};
            \node[NoeudDyck,Marque1](6)at(6,0){};
            \draw[PasDyck](0)--(1);
            \draw[PasDyck](1)--(2);
            \draw[PasDyck](2)--(3);
            \draw[PasDyck](3)--(4);
            \draw[PasDyck](4)--(5);
            \draw[PasDyck](5)--(6);
        \end{tikzpicture}}\end{split}
        \enspace = \enspace
        \begin{split}\scalebox{.3}{\begin{tikzpicture}
            \draw[Grille] (0,0) grid (13,4);
            \node[NoeudDyck](0)at(0,0){};
            \node[NoeudDyck](1)at(1,1){};
            \node[NoeudDyck](2)at(2,1){};
            \node[NoeudDyck,Marque1](3)at(3,2){};
            \node[NoeudDyck,Marque1](4)at(4,3){};
            \node[NoeudDyck,Marque1](5)at(5,4){};
            \node[NoeudDyck,Marque1](6)at(6,4){};
            \node[NoeudDyck,Marque1](7)at(7,3){};
            \node[NoeudDyck,Marque1](8)at(8,3){};
            \node[NoeudDyck,Marque1](9)at(9,2){};
            \node[NoeudDyck](10)at(10,3){};
            \node[NoeudDyck](11)at(11,2){};
            \node[NoeudDyck](12)at(12,1){};
            \node[NoeudDyck](13)at(13,0){};
            \draw[PasDyck](0)--(1);
            \draw[PasDyck](1)--(2);
            \draw[PasDyck](2)--(3);
            \draw[PasDyck](3)--(4);
            \draw[PasDyck](4)--(5);
            \draw[PasDyck](5)--(6);
            \draw[PasDyck](6)--(7);
            \draw[PasDyck](7)--(8);
            \draw[PasDyck](8)--(9);
            \draw[PasDyck](9)--(10);
            \draw[PasDyck](10)--(11);
            \draw[PasDyck](11)--(12);
            \draw[PasDyck](12)--(13);
        \end{tikzpicture}}\end{split}
    \end{equation*}
    \caption{Interpretation of the partial composition map of the ns operad~$\Motz$
    in terms of Motzkin paths.}
    \label{fig:SubsMotz}
\end{figure}
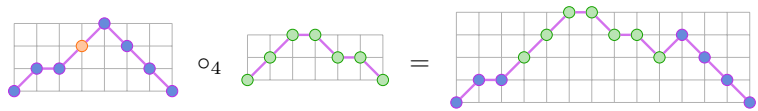
\medskip

\begin{Theoreme} \label{prop:RelationsGenMotz}
    The ns operad~$\Motz$ admits the presentation
    \begin{equation}
        \Motz = \CalF\left(\left\{\MotzOpA, \MotzOpB\right\}\right)/_\equiv,
    \end{equation}
    where~$\MotzOpA$ is of arity~$2$,~$\MotzOpB$ of arity~$3$,
    and~$\equiv$ is the ns operadic congruence generated by

    \begin{minipage}{.45\textwidth}
    \begin{equation}
        \MotzOpA \circ_1 \MotzOpA
            \: \leftrightarrow \: \MotzOpA \circ_2 \MotzOpA,
    \end{equation}
    \begin{equation}
        \MotzOpB \circ_1 \MotzOpA
            \: \leftrightarrow \: \MotzOpA \circ_2 \MotzOpB,
    \end{equation}
    \end{minipage}
    \begin{minipage}{.45\textwidth}
    \begin{equation}
        \MotzOpA \circ_1 \MotzOpB
            \: \leftrightarrow \: \MotzOpB \circ_3 \MotzOpA,
    \end{equation}
    \begin{equation}
        \MotzOpB \circ_1 \MotzOpB
            \: \leftrightarrow \: \MotzOpB \circ_3 \MotzOpB.
    \end{equation}
    \end{minipage}
\end{Theoreme}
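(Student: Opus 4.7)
The plan is to apply Lemma~\ref{lem:PresentationReecriture}, following the template used in the proofs of Theorems~\ref{thm:PresentationFCatk} and~\ref{thm:PresentationSchr}. First, interpreting $\MotzOpA$ as $00 \in \Motz(2)$ and $\MotzOpB$ as $010 \in \Motz(3)$, I would verify condition~\ref{item:ReecritureConditionEquiv} by computing $\Eval$ on each pair in the four relations. The required equalities reduce to routine computations in $\T \EnsNat$: $00 \circ_1 00 = 000 = 00 \circ_2 00$, $010 \circ_1 00 = 0010 = 00 \circ_2 010$, $00 \circ_1 010 = 0100 = 010 \circ_3 00$, and $010 \circ_1 010 = 01010 = 010 \circ_3 010$.

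Next, I would orient $\leftrightarrow$ into the rewrite rule $\mapsto$ by reading each of the four relations from left to right. The geometric effect shared by all four oriented rules is to forbid an internal node (of either label) from appearing as the leftmost child of another internal node. For termination, I would use the weight statistic $\Poids$: a direct inspection of the four patterns shows that each oriented rule sends its left-hand side (of weight $0$) to its right-hand side (of weight $1$). Since the number of internal nodes is the same (namely two) on both sides of every rule, the weight contribution of every ancestor of a rewritten occurrence is unchanged by the rewriting, so a single local rewriting strictly increases the global weight. Because the total number of internal nodes is preserved by $\mapsto$, the weight is bounded above, and $\mapsto$ is terminating.

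I would then describe the normal forms of $\mapsto$: these are exactly the syntax trees in $\CalF(\{\MotzOpA, \MotzOpB\})$ in which every internal node has a leaf as its leftmost child. Writing $\CalS$ for this set, a regular specification decomposes $\CalS$ as the disjoint union of the single-leaf tree, of the trees rooted on $\MotzOpA$ whose left child is a leaf and whose right child is an arbitrary element of $\CalS$, and of the trees rooted on $\MotzOpB$ whose leftmost child is a leaf and whose two other children are arbitrary elements of $\CalS$. The generating series of $\CalS$ therefore satisfies
\begin{equation}
    F(t) = t + t\, F(t) + t\, F(t)^2,
\end{equation}
which is the classical Motzkin functional equation. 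By Proposition~\ref{prop:ElemMotz}, $F(t)$ is also the Hilbert series of $\Motz$, so condition~\ref{item:ReecritureConditionFN} holds, and Lemma~\ref{lem:PresentationReecriture} yields the presentation.

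The main technical obstacle is the termination argument: one must ensure that the local weight increase translates into a global weight increase, which rests on the observation that each of the four rules preserves the number of internal nodes of the pattern and therefore does not alter the weight contribution of any ancestor. The verification of the relations and the derivation of the generating series are by contrast entirely routine.
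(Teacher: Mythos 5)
Your proposal is correct and follows essentially the same route as the paper's proof: the same evaluation check (the four relations evaluate to $000$, $0010$, $0100$, and $01010$), the same left-to-right orientation, termination via the weight statistic $\Poids$, the same characterization of normal forms as trees whose internal nodes all have a leaf as leftmost child, the regular specification giving $F(t) = t + t\,F(t) + t\,F(t)^2$, and the conclusion by Lemma~\ref{lem:PresentationReecriture}. The only quibble is in the termination bookkeeping: in context the contribution of the two rewritten nodes to $\Poids$ increases by $1$ plus the number of internal nodes hanging under the rightmost leaf of the occurrence, not by exactly $1$ as your ``weight $0$ to weight $1$'' count of the isolated patterns suggests, but since this only reinforces the strict increase the argument stands.
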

\begin{proof}
    First, note that by replacing~$\MotzOpA$ by~$00 \in \Motz(2)$ and~$\MotzOpB$
    by~$010 \in \Motz(3)$, we have $\Eval(x) = \Eval(y)$ for the four
    relations~$x \leftrightarrow y$ of the statement of the Theorem.
    Indeed, the four equivalence classes are, respectively, the ones of the
    elements~$000$, $0010$, $0100$, and~$01010$ of~$\Motz$.
    \smallskip

    Consider now the orientation of~$\leftrightarrow$ into the rewrite
    rule~$\mapsto$ defined by

    \begin{minipage}{.45\textwidth}
    \begin{equation}
        \begin{split}\scalebox{.35}{\begin{tikzpicture}
            \node[Feuille](0)at(0.0,-2){};
            \node[Noeud,EtiqClair,minimum size=15mm](1)at(1.0,-1){\scalebox{2}{\MotzOpA}};
            \node[Feuille](2)at(2.0,-2){};
            \draw[Arete](1)--(0);
            \draw[Arete](1)--(2);
            \node[Noeud,EtiqClair,minimum size=15mm](3)at(3.0,0){\scalebox{2}{\MotzOpA}};
            \node[Feuille](4)at(4.0,-1){};
            \draw[Arete](3)--(1);
            \draw[Arete](3)--(4);
        \end{tikzpicture}}\end{split}
        \begin{split}\enspace \mapsto \enspace \end{split}
        \begin{split}\scalebox{.35}{\begin{tikzpicture}
            \node[Feuille](0)at(0.0,-1){};
            \node[Noeud,EtiqClair,minimum size=15mm](1)at(1.0,0){\scalebox{2}{\MotzOpA}};
            \node[Feuille](2)at(2.0,-2){};
            \node[Noeud,EtiqClair,minimum size=15mm](3)at(3.0,-1){\scalebox{2}{\MotzOpA}};
            \node[Feuille](4)at(4.0,-2){};
            \draw[Arete](3)--(2);
            \draw[Arete](3)--(4);
            \draw[Arete](1)--(0);
            \draw[Arete](1)--(3);
        \end{tikzpicture}}\end{split},
    \end{equation}
    \begin{equation}
        \begin{split}\scalebox{.35}{\begin{tikzpicture}
            \node[Feuille](0)at(0.0,-2){};
            \node[Noeud,EtiqClair,minimum size=15mm](1)at(1.0,-1){\scalebox{2}{\MotzOpA}};
            \node[Feuille](2)at(2.0,-2){};
            \draw[Arete](1)--(0);
            \draw[Arete](1)--(2);
            \node[Noeud,EtiqClair,minimum size=15mm](3)at(3.0,0){\scalebox{2}{\MotzOpB}};
            \node[Feuille](4)at(4.0,-1.5){};
            \node[Feuille](5)at(3,-1.5){};
            \draw[Arete](3)--(1);
            \draw[Arete](3)--(4);
            \draw[Arete](3)--(5);
        \end{tikzpicture}}\end{split}
        \begin{split}\enspace \mapsto \enspace \end{split}
        \begin{split}\scalebox{.35}{\begin{tikzpicture}
            \node[Feuille](0)at(0.0,-1){};
            \node[Noeud,EtiqClair,minimum size=15mm](1)at(1.0,0){\scalebox{2}{\MotzOpA}};
            \node[Feuille](2)at(2.0,-2.5){};
            \node[Noeud,EtiqClair,minimum size=15mm](3)at(3.0,-1){\scalebox{2}{\MotzOpB}};
            \node[Feuille](4)at(4.0,-2.5){};
            \node[Feuille](5)at(3,-2.5){};
            \draw[Arete](3)--(2);
            \draw[Arete](3)--(4);
            \draw[Arete](1)--(0);
            \draw[Arete](1)--(3);
            \draw[Arete](3)--(5);
        \end{tikzpicture}}\end{split},
    \end{equation}
    \end{minipage}
    \begin{minipage}{.45\textwidth}
    \begin{equation}
        \begin{split}\scalebox{.35}{\begin{tikzpicture}
            \node[Feuille](0)at(0.0,-2.5){};
            \node[Noeud,EtiqClair,minimum size=15mm](1)at(1.0,-1){\scalebox{2}{\MotzOpB}};
            \node[Feuille](2)at(2.0,-2.5){};
            \draw[Arete](1)--(0);
            \draw[Arete](1)--(2);
            \node[Noeud,EtiqClair,minimum size=15mm](3)at(3.0,0){\scalebox{2}{\MotzOpA}};
            \node[Feuille](4)at(4.0,-1){};
            \node[Feuille](5)at(1,-2.5){};
            \draw[Arete](3)--(1);
            \draw[Arete](3)--(4);
            \draw[Arete](1)--(5);
        \end{tikzpicture}}\end{split}
        \begin{split}\enspace \mapsto \enspace \end{split}
        \begin{split}\scalebox{.35}{\begin{tikzpicture}
            \node[Feuille](0)at(0.0,-1.5){};
            \node[Noeud,EtiqClair,minimum size=15mm](1)at(1.0,0){\scalebox{2}{\MotzOpB}};
            \node[Feuille](2)at(2.0,-2){};
            \node[Noeud,EtiqClair,minimum size=15mm](3)at(3.0,-1){\scalebox{2}{\MotzOpA}};
            \node[Feuille](4)at(4.0,-2){};
            \node[Feuille](5)at(1,-1.5){};
            \draw[Arete](3)--(2);
            \draw[Arete](3)--(4);
            \draw[Arete](1)--(0);
            \draw[Arete](1)--(3);
            \draw[Arete](1)--(5);
        \end{tikzpicture}}\end{split},
    \end{equation}
    \begin{equation}
        \begin{split}\scalebox{.35}{\begin{tikzpicture}
            \node[Feuille](0)at(0.0,-2.5){};
            \node[Noeud,EtiqClair,minimum size=15mm](1)at(1.0,-1){\scalebox{2}{\MotzOpB}};
            \node[Feuille](2)at(2.0,-2.5){};
            \draw[Arete](1)--(0);
            \draw[Arete](1)--(2);
            \node[Noeud,EtiqClair,minimum size=15mm](3)at(3.0,0){\scalebox{2}{\MotzOpB}};
            \node[Feuille](4)at(4.0,-1.5){};
            \node[Feuille](5)at(1,-2.5){};
            \node[Feuille](6)at(3,-1.5){};
            \draw[Arete](3)--(1);
            \draw[Arete](3)--(4);
            \draw[Arete](1)--(5);
            \draw[Arete](3)--(6);
        \end{tikzpicture}}\end{split}
        \begin{split}\enspace \mapsto \enspace \end{split}
        \begin{split}\scalebox{.35}{\begin{tikzpicture}
            \node[Feuille](0)at(0.0,-1.5){};
            \node[Noeud,EtiqClair,minimum size=15mm](1)at(1.0,0){\scalebox{2}{\MotzOpB}};
            \node[Feuille](2)at(2.0,-2.5){};
            \node[Noeud,EtiqClair,minimum size=15mm](3)at(3.0,-1){\scalebox{2}{\MotzOpB}};
            \node[Feuille](4)at(4.0,-2.5){};
            \node[Feuille](5)at(1,-1.5){};
            \node[Feuille](6)at(3,-2.5){};
            \draw[Arete](3)--(2);
            \draw[Arete](3)--(4);
            \draw[Arete](1)--(0);
            \draw[Arete](1)--(3);
            \draw[Arete](1)--(5);
            \draw[Arete](3)--(6);
        \end{tikzpicture}}\end{split}.
    \end{equation}
    \end{minipage}

    This rewrite rule is terminating. Indeed, it is plain that for any
    rewriting~$T_0 \to T_1$, we have~$\Poids(T_0) < \Poids(T_1)$.
    \smallskip

    Moreover, the normal forms of~$\mapsto$ are all syntax trees
    of~$\CalF\left(\left\{\MotzOpA, \MotzOpB\right\}\right)$
    which have no internal node with an internal node as leftmost child.
    This set~$\CalS$ of trees admits the following regular specification
    \begin{equation}
        \begin{split}\CalS \end{split} = \Feuille
        \enspace + \enspace
        \begin{split}\scalebox{.3}{\begin{tikzpicture}
            \node[Noeud,EtiqClair,minimum size=15mm](1)at(0,0)
                {\scalebox{2}{\MotzOpA}};
            \node[Feuille](2)at(-1.5,-1.5){};
            \node(3)at(1.5,-1.5){\Huge $\CalS$};
            \draw[Arete](1)--(2);
            \draw[Arete](1)--(3);
        \end{tikzpicture}}\end{split}
        \enspace + \enspace
        \begin{split}\scalebox{.3}{\begin{tikzpicture}
            \node[Noeud,EtiqClair,minimum size=15mm](1)at(0,0)
                {\scalebox{2}{\MotzOpB}};
            \node[Feuille](2)at(-1.5,-1.5){};
            \node(3)at(0,-1.5){\Huge $\CalS$};
            \node(4)at(1.5,-1.5){\Huge $\CalS$};
            \draw[Arete](1)--(2);
            \draw[Arete](1)--(3);
            \draw[Arete](1)--(4);
        \end{tikzpicture}}\end{split}.
    \end{equation}
    Hence, the generating series~$F(t)$ of~$\CalS$ satisfies
    \begin{equation}
        F(t) = t + t F(t) + t {F(t)}^2,
    \end{equation}
    that is the generating series of Motzkin words. By
    Proposition~\ref{prop:ElemMotz},~$F(t)$ also is the Hilbert series
    of~$\Motz$.
    \smallskip

    Hence, by Lemma~\ref{lem:PresentationReecriture},~$\Motz$
    admits the claimed presentation.
\end{proof}
\medskip

\subsubsection{A ns operad on integer compositions}
Let~$\Comp$ be the ns suboperad of~$\T \EnsNat_2$ generated
by~$00$ and~$01$. The following table shows the first elements of~$\Comp$.
\begin{center}
    \begin{tabular}{c|p{11cm}}
        Arity & Elements of~$\Comp$ \\ \hline \hline
        $1$ & $0$ \\ \hline
        $2$ & $00$, $01$ \\ \hline
        $3$ & $000$, $001$, $010$, $011$ \\ \hline
        $4$ & $0000$, $0001$, $0010$, $0011$, $0100$, $0101$, $0110$, $0111$ \\ \hline
        $5$ & $00000$, $00001$, $00010$, $00011$, $00100$, $00101$, $00110$,
              $00111$, $01000$, $01001$, $01010$, $01011$, $01100$, $01101$,
              $01110$, $01111$
    \end{tabular}
\end{center}
\medskip

Since~$\FCat{1}$ is the ns suboperad of~$\T \EnsNat$ generated
by~$00$ and~$01$, and since~$\T \EnsNat_2$ is a quotient of~$\T \EnsNat$,~$\Comp$
is a quotient of~$\FCat{1}$. Moreover,~$\FCat{0}$ is a quotient of~$\Comp$
by the ns operadic congruence~$\equiv$ defined for all~$x, y \in \Comp$
by~$x \equiv y$.
\medskip

One has the following characterization of the elements of~$\Comp$:
\begin{Proposition} \label{prop:ElemComp}
    The elements of~$\Comp$ are exactly the words on the alphabet~$\{0, 1\}$
    beginning by~$0$.
\end{Proposition}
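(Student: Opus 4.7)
The plan is to prove both inclusions by induction on the length of the words, following the same template used for the characterizations of $\APE$ (Proposition~\ref{prop:MotsAPE}), $\FCat{k}$ (Proposition~\ref{prop:MotsFCat}) and $\Motz$ (Proposition~\ref{prop:ElemMotz}). Throughout, additions are performed in $\EnsNat_2$, and the key feature we exploit is that the two generators $00$ and~$01$ both start with~$0$ and have letters in $\{0,1\}$.

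First, I would show that every element of $\Comp$ is a word on $\{0,1\}$ beginning by~$0$. The base case $|x|=1$ is immediate since the only element is the unit $0$. For the inductive step, I would apply Lemma~\ref{lem:GenerationOpNSEns}: any non-unit $x \in \Comp$ can be written as $x = y \circ_i g$ with $y \in \Comp$, $i \in [|y|]$, and $g \in \{00, 01\}$. Writing
\begin{equation*}
    x = (y_1, \dots, y_{i - 1}, y_i + g_1, y_i + g_2, y_{i + 1}, \dots, y_{|y|}),
\end{equation*}
the induction hypothesis gives $y_j \in \{0,1\}$ and $y_1 = 0$. Since $g_1, g_2 \in \{0,1\}$, every letter of $x$ is in $\{0,1\}$ (addition mod~$2$), and $x_1$ equals $y_1$ if $i > 1$ and equals $y_1 + g_1 = 0$ if $i = 1$; hence $x_1 = 0$.

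Conversely, I would show that any word $x$ on $\{0,1\}$ with $x_1 = 0$ belongs to~$\Comp$, again by induction on $|x|$. The base case is clear. For $n := |x| \geq 2$, set $y := (x_1, \dots, x_{n-1})$. This word still begins with $0$ and has letters in $\{0,1\}$, so by induction~$y \in \Comp$. There are two cases according to the last letter: if $x_n = x_{n-1}$, then $x = y \circ_{n-1} 00$; if $x_n \ne x_{n-1}$, then $x_n = x_{n-1} + 1 \pmod 2$, so $x = y \circ_{n-1} 01$. In either case, $x \in \Comp$.

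There is no real obstacle here: the argument is a direct transcription of the pattern already established for suboperads of $\T \EnsNat$, with the only delicate point being to keep track that sums are taken modulo~$2$ so that the alphabet stays in $\{0,1\}$; but since both $00$ and $01$ have letters in $\{0,1\}$ and since a letter added to $0$ or $1$ remains in $\{0,1\}$ mod~$2$, this causes no issue.
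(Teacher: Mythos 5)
Your proof is correct. The first inclusion is argued exactly as in the paper (via Lemma~\ref{lem:GenerationOpNSEns} and the fact that both generators begin with~$0$). In the converse direction, however, you use a different decomposition: you peel off the \emph{last} letter, setting $y := (x_1, \dots, x_{n-1})$ and writing $x = y \circ_{n-1} 00$ or $x = y \circ_{n-1} 01$ according to whether $x_n$ equals $x_{n-1}$ or not, which always succeeds because in $\EnsNat_2$ any letter is obtained from the previous one by adding $0$ or $1$. The paper instead treats the all-zeros word as a special case and otherwise locates an interior factor $x_i x_{i+1} = 01$, contracts it to get $y$, and writes $x = y \circ_i 01$. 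Your route is the one the paper itself uses for $\MT$ in Proposition~\ref{prop:GenerationMT}, and it buys a cleaner, case-free induction; the paper's route is the one it reuses almost verbatim for $\SComp$ and $\Tr$, where the factor-contraction pattern is the common template. Both decompositions are valid, and your verification that the sums stay in $\{0,1\}$ modulo~$2$ is the right point to make explicit.
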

\begin{proof}
    It is immediate, from the definition of~$\Comp$ and
    Lemma~\ref{lem:GenerationOpNSEns}, that any element of this ns operad
    starts by~$0$ since its generators~$00$ and~$01$ all start by~$0$.
    \smallskip

    Let us now show by induction on the length of the words that~$\Comp$
    contains any word~$x$ satisfying the statement. This is
    true when~$|x| = 1$. When~$n := |x| \geq 2$, let us observe that if~$x$
    only consists in letters~$0$,~$\Comp$ contains~$x$ because~$x$ can be
    obtained by composing the generator~$00$ with itself. Otherwise,
    $x$ has at least one occurrence of~$1$. Since its first letter is~$0$,
    there is in~$x$ a factor~$x_i x_{i + 1} = 01$. By setting
    \begin{equation}
        y := (x_1, \dots, x_i, x_{i + 2}, \dots, x_n),
    \end{equation}
    we have~$x = y \circ_i 01$. Since~$y$ satisfies the statement, by
    induction hypothesis~$\Comp$ contains~$y$. Hence,~$\Comp$ also
    contains~$x$.
\end{proof}
\medskip

An {\em integer composition} is a sequence~$u_1 \dots u_k$  of positive
integers. The size~$|u|$ of an integer composition~$u$ is the sum of its
letters. It is well-known that there are~$2^{n - 1}$ integer compositions
of size~$n$. We shall represent an integer composition~$u := u_1 \dots u_k$
by a {\em ribbon diagram}, that is the diagram in which each letter~$u_i$
of~$u$ is encoded by a column consisting in~$u_i$ boxes, and the column
encoding the letter~$u_{i + 1}$ is attached on the right edge of  the
bottommost box of the column encoding~$u_i$, for any~$i \in [k - 1]$.
The {\em $i$th box} of a ribbon diagram~$D$ is the $i$th encountered box
by traversing~$D$ column by column from left to right and from top to
bottom. The {\em transpose} of~$D$ is the ribbon diagram obtained by
applying on~$D$ the reflection through the line passing by its first and
its last boxes. There is a bijection~$\phi_\Comp$ between the words of~$\Comp$
of arity~$n$ and ribbon diagrams of integer compositions of size~$n$.
\medskip

To compute~$\phi_\Comp(x)$ where~$x$ is an element of~$\Comp$, iteratively
insert the letters of~$x$ from left to right according to the following
procedure. If~$|x| = 1$, then~$x = 0$ and~$\phi_\Comp(0)$ is the only
ribbon diagram consisting in one box. Otherwise, the insertion of a
letter~$\La$ into~$D$ consists in adding a new box below (resp. to the
right of) the right bottommost box of~$D$ if~$\La = 1$ (resp. $\La = 0$).
\medskip

The inverse bijection is computed as follows. Given a ribbon diagram~$D$
of an integer composition of size~$n$, one computes an element of~$\Comp$
of arity~$n$ by labelling the first box of~$D$ by~$0$ and the $i$th box~$b$
by~$0$ if the $(i - 1)$st box is on the left of~$b$ or by~$1$ otherwise,
for any~$1 \leq i \leq n$. The corresponding element of~$\Comp$ is obtained
by reading the labels of~$D$ from top to bottom and left to right.
\medskip

Since the elements of~$\Comp$ satisfy Proposition~\ref{prop:ElemComp},
$\phi_\Comp$ is well-defined. Hence, we can regard the elements of arity~$n$
of~$\Comp$ as ribbon diagrams with~$n$ boxes. Figure~\ref{fig:BijCompComp}
shows an example of this bijection.
\begin{figure}[ht]
    \centering
    \begin{equation*}
        \begin{split}0100001011011010 \quad \xrightarrow{\phi_\Comp} \quad \end{split}
        \begin{split}\scalebox{.25}{\begin{tikzpicture}
            \node[Boite,EtiqClair]at(0,0){$0$};
            \node[Boite,EtiqClair]at(0,-1){$1$};
            \node[Boite,EtiqClair]at(1,-1){$0$};
            \node[Boite,EtiqClair]at(2,-1){$0$};
            \node[Boite,EtiqClair]at(3,-1){$0$};
            \node[Boite,EtiqClair]at(4,-1){$0$};
            \node[Boite,EtiqClair]at(4,-2){$1$};
            \node[Boite,EtiqClair]at(5,-2){$0$};
            \node[Boite,EtiqClair]at(5,-3){$1$};
            \node[Boite,EtiqClair]at(5,-4){$1$};
            \node[Boite,EtiqClair]at(6,-4){$0$};
            \node[Boite,EtiqClair]at(6,-5){$1$};
            \node[Boite,EtiqClair]at(6,-6){$1$};
            \node[Boite,EtiqClair]at(7,-6){$0$};
            \node[Boite,EtiqClair]at(7,-7){$1$};
            \node[Boite,EtiqClair]at(8,-7){$0$};
        \end{tikzpicture}}\end{split}
        \quad \longleftrightarrow \quad
        \begin{split}\scalebox{.25}{\begin{tikzpicture}
            \node[Boite]at(0,0){};
            \node[Boite]at(0,-1){};
            \node[Boite]at(1,-1){};
            \node[Boite]at(2,-1){};
            \node[Boite]at(3,-1){};
            \node[Boite]at(4,-1){};
            \node[Boite]at(4,-2){};
            \node[Boite]at(5,-2){};
            \node[Boite]at(5,-3){};
            \node[Boite]at(5,-4){};
            \node[Boite]at(6,-4){};
            \node[Boite]at(6,-5){};
            \node[Boite]at(6,-6){};
            \node[Boite]at(7,-6){};
            \node[Boite]at(7,-7){};
            \node[Boite]at(8,-7){};
        \end{tikzpicture}}\end{split}
    \end{equation*}
    \caption{Interpretation of an element of the ns operad~$\Comp$
    in terms of integer compositions via the bijection~$\phi_\Comp$.
    Boxes of the ribbon diagram in the middle are labeled.}
    \label{fig:BijCompComp}
\end{figure}
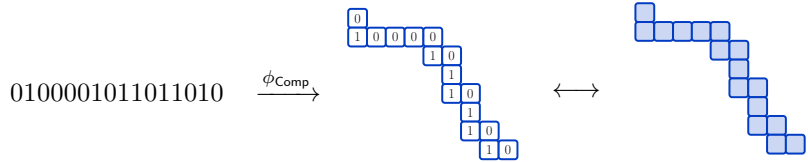
\medskip

Encoding integer compositions by ribbon diagrams offers an alternative
way to compute the composition of elements of~$\Comp$:
\begin{Proposition} \label{prop:SubsComp}
    Let~$C$ and~$D$ be two ribbon diagrams, $i$ be an integer, and~$c$ be
    the $i$th box of~$C$. Then, the composition $C \circ_i D$ in~$\Comp$
    amounts to replace~$c$ by~$D$ if~$c$ is the upper box of its column,
    or to replace~$c$ by the transpose ribbon diagram of~$D$ otherwise.
\end{Proposition}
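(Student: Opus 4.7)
The plan is to transport the graft back to the word encoding via $\phi_\Comp^{-1}$, compute it using the formula for $\T \EnsNat_2$, and then interpret the resulting word as a ribbon diagram through $\phi_\Comp$. Let $x := \phi_\Comp^{-1}(C) \in \Comp(n)$ and $y := \phi_\Comp^{-1}(D) \in \Comp(m)$. By definition of the grafting map in $\T \EnsNat_2$,
\begin{equation*}
    x \circ_i y = (x_1, \dots, x_{i - 1}, x_i + y_1, \dots, x_i + y_m, x_{i + 1}, \dots, x_n),
\end{equation*}
where sums are taken modulo~$2$. By Proposition~\ref{prop:ElemComp}, $y_1 = 0$, so the first letter of the inserted factor equals $x_i$ itself, and the $i$th box of $\phi_\Comp(x \circ_i y)$ is placed with respect to the $(i - 1)$st box in exactly the same way as~$c$ is in~$C$.

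I would then split the argument according to the value of $x_i$. By construction of $\phi_\Comp$, the box~$c$ is the upper box of its column if and only if $x_i = 0$. When $x_i = 0$, the inserted factor $(x_i + y_1, \dots, x_i + y_m)$ is equal to $y$, so the portion of $\phi_\Comp(x \circ_i y)$ produced by these letters coincides with~$D$. The suffix $x_{i + 1} \dots x_n$ then extends this partial diagram starting from the last box of~$D$, which is precisely the right-bottommost box at this stage of the iterative construction of $\phi_\Comp$. This settles the first half of the statement.

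When $x_i = 1$, one has $x_i + y_j = 1 - y_j$ for all $j \in [m]$, so the inserted factor is obtained from~$y$ by exchanging the letters~$0$ and~$1$. The core of the argument is then to check that exchanging $0$ and $1$ throughout the $\phi_\Comp$-encoding of a ribbon diagram swaps the \emph{right of} and \emph{below} relative placements between consecutive boxes, which geometrically amounts to the reflection through the line joining the first and the last boxes, that is the transpose. This can be established by a direct induction on the number of boxes, using the fact that $\phi_\Comp$ is defined only in terms of these local relationships. Hence the portion of $\phi_\Comp(x \circ_i y)$ built from the inserted factor is the transpose of~$D$, and the suffix $x_{i + 1} \dots x_n$ attaches to it as in the previous case.

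The main obstacle lies in this second case: one needs to verify carefully that exchanging $0$ and~$1$ in the encoding exactly realizes the geometric transpose, and that the junction with the suffix $x_{i + 1} \dots x_n$ behaves correctly, in the sense that the last box of~$D$ (respectively, of its transpose) plays the role of the right-bottommost box at the moment when the iterative construction of $\phi_\Comp$ reaches it. Both verifications follow from the definition of $\phi_\Comp$, but they are the place where one has to be careful rather than merely formal.
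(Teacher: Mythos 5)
Your proposal is correct and follows essentially the same route as the paper's proof: pull the graft back to the word encoding, observe that the inserted factor is $y$ shifted by $x_i$ modulo $2$, and split on whether $x_i = 0$ (box $c$ is the top of its column, insertion of $D$ itself) or $x_i = 1$ (insertion of the complement of $y$, which encodes the transpose of $D$). The paper asserts the complementation--transposition correspondence as immediate from the definition of $\phi_\Comp$, whereas you flag it for an inductive verification; that is a matter of level of detail, not of substance.
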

\begin{proof}
    Let~$x \in \Comp(n)$ and~$y \in \Comp(m)$ such that~$C := \phi_\Comp(x)$
    and~$D := \phi_\Comp(y)$. Let~$E := \phi_\Comp(x \circ_i y)$.
    By definition of~$\phi_\Comp$ and the partial composition maps of~$\Comp$,~$E$
    is obtained by inserting the prefix of length~$i - 1$ of~$x$, then
    the letters of~$y$ incremented by~$x_i$, and finally, the suffix
    of length~$n - i$ of~$x$. If~$c$ is the upper box of its column,
    then~$x_i = 0$, and by definition of~$\phi_\Comp$,~$E$ is obtained by
    replacing~$c$ by~$D$ in~$C$. Otherwise, $c$ is not the upper box
    of its column, and then~$x_i = 1$. Immediately from the definition
    of~$\phi_\Comp$, for any word~$z$ on the alphabet~$\{0, 1\}$, the
    ribbon diagram~$\phi_\Comp(0z)$ is the transpose
    of~$\phi_\Comp(0 \bar z)$ where~$\bar z$ is the complementary of~$z$.
    This implies the statement.
\end{proof}
\medskip

Figure~\ref{fig:SubsComp} shows two examples of compositions in~$\Comp$.
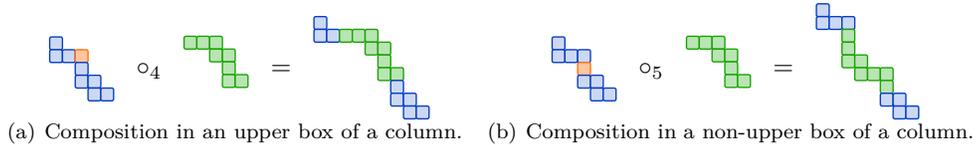
\begin{figure}[ht]
    \centering
    \subfigure[Composition in an upper box of a column.]{\makebox[.45\textwidth]{
    \scalebox{.17}{\raisebox{-6em}{\begin{tikzpicture}
        \node[Boite]at(0,0){};
        \node[Boite]at(0,-1){};
        \node[Boite]at(1,-1){};
        \node[Boite,Marque2]at(2,-1){};
        \node[Boite]at(2,-2){};
        \node[Boite]at(2,-3){};
        \node[Boite]at(3,-3){};
        \node[Boite]at(3,-4){};
        \node[Boite]at(4,-4){};
    \end{tikzpicture}}}
    $\enspace \circ_4 \enspace$
    \scalebox{.17}{\raisebox{-3em}{\begin{tikzpicture}
        \node[Boite,Marque1]at(0,0){};
        \node[Boite,Marque1]at(1,0){};
        \node[Boite,Marque1]at(2,0){};
        \node[Boite,Marque1]at(2,-1){};
        \node[Boite,Marque1]at(3,-1){};
        \node[Boite,Marque1]at(3,-2){};
        \node[Boite,Marque1]at(3,-3){};
        \node[Boite,Marque1]at(4,-3){};
    \end{tikzpicture}}}
    $\enspace = \enspace$
    \scalebox{.17}{\raisebox{-10em}{\begin{tikzpicture}
        \node[Boite]at(0,0){};
        \node[Boite]at(0,-1){};
        \node[Boite]at(1,-1){};
        \node[Boite,Marque1]at(2,-1){};
        \node[Boite,Marque1]at(3,-1){};
        \node[Boite,Marque1]at(4,-1){};
        \node[Boite,Marque1]at(4,-2){};
        \node[Boite,Marque1]at(5,-2){};
        \node[Boite,Marque1]at(5,-3){};
        \node[Boite,Marque1]at(5,-4){};
        \node[Boite,Marque1]at(6,-4){};
        \node[Boite]at(6,-5){};
        \node[Boite]at(6,-6){};
        \node[Boite]at(7,-6){};
        \node[Boite]at(7,-7){};
        \node[Boite]at(8,-7){};
    \end{tikzpicture}}}}}
    \subfigure[Composition in a non-upper box of a column.]{\makebox[.45\textwidth]{
    \scalebox{.17}{\raisebox{-6em}{\begin{tikzpicture}
        \node[Boite]at(0,0){};
        \node[Boite]at(0,-1){};
        \node[Boite]at(1,-1){};
        \node[Boite]at(2,-1){};
        \node[Boite,Marque2]at(2,-2){};
        \node[Boite]at(2,-3){};
        \node[Boite]at(3,-3){};
        \node[Boite]at(3,-4){};
        \node[Boite]at(4,-4){};
    \end{tikzpicture}}}
    $\enspace \circ_5 \enspace$
    \scalebox{.17}{\raisebox{-3em}{\begin{tikzpicture}
        \node[Boite,Marque1]at(0,0){};
        \node[Boite,Marque1]at(1,0){};
        \node[Boite,Marque1]at(2,0){};
        \node[Boite,Marque1]at(2,-1){};
        \node[Boite,Marque1]at(3,-1){};
        \node[Boite,Marque1]at(3,-2){};
        \node[Boite,Marque1]at(3,-3){};
        \node[Boite,Marque1]at(4,-3){};
    \end{tikzpicture}}}
    $\enspace = \enspace$
    \scalebox{.17}{\raisebox{-10em}{\begin{tikzpicture}
        \node[Boite]at(0,0){};
        \node[Boite]at(0,-1){};
        \node[Boite]at(1,-1){};
        \node[Boite]at(2,-1){};
        \node[Boite,Marque1]at(2,-2){};
        \node[Boite,Marque1]at(2,-3){};
        \node[Boite,Marque1]at(2,-4){};
        \node[Boite,Marque1]at(3,-4){};
        \node[Boite,Marque1]at(3,-5){};
        \node[Boite,Marque1]at(4,-5){};
        \node[Boite,Marque1]at(5,-5){};
        \node[Boite,Marque1]at(5,-6){};
        \node[Boite]at(5,-7){};
        \node[Boite]at(6,-7){};
        \node[Boite]at(6,-8){};
        \node[Boite]at(7,-8){};
    \end{tikzpicture}}}}}
    \caption{Interpretation of the partial composition map of the ns
    operad~$\Comp$ in terms of ribbon diagrams.}
    \label{fig:SubsComp}
\end{figure}

\begin{Theoreme} \label{thm:PresentationComp}
    The ns operad~$\Comp$ admits the presentation
    \begin{equation}
        \Comp = \CalF\left(\left\{\CompOpA, \CompOpB\right\}\right)/_\equiv,
    \end{equation}
    where~$\CompOpA$ and~$\CompOpB$ are of arity~$2$, and~$\equiv$ is
    the ns operadic congruence generated by

    \begin{minipage}{.45\textwidth}
    \begin{equation}
        \CompOpA \circ_1 \CompOpA
            \: \leftrightarrow \: \CompOpA \circ_2 \CompOpA,
    \end{equation}
    \begin{equation}
        \CompOpB \circ_1 \CompOpA
            \: \leftrightarrow \: \CompOpA \circ_2 \CompOpB,
    \end{equation}
    \end{minipage}
    \begin{minipage}{.45\textwidth}
    \begin{equation}
        \CompOpB \circ_1 \CompOpB
            \: \leftrightarrow \: \CompOpB \circ_2 \CompOpA,
    \end{equation}
    \begin{equation}
        \CompOpA \circ_1 \CompOpB
            \: \leftrightarrow \: \CompOpB \circ_2 \CompOpB.
    \end{equation}
    \end{minipage}
\end{Theoreme}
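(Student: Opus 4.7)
The strategy will mirror the proofs of Theorems~\ref{thm:PresentationFCatk}, \ref{thm:PresentationSchr}, and~\ref{prop:RelationsGenMotz}: check that the four relations hold in $\Comp$ under the evaluation map, orient $\leftrightarrow$ into a terminating rewrite rule~$\mapsto$, count its normal forms by a regular specification, and conclude via Lemma~\ref{lem:PresentationReecriture}. Since $\Comp$ is by construction generated by $\CompOpA = 00$ and $\CompOpB = 01$, the hypotheses of that lemma reduce to exactly these two verifications.

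For condition~\ref{item:ReecritureConditionEquiv}, a direct computation in $\T\EnsNat_2$ shows that the four pairs evaluate respectively to $000$, $001$, $010$, and $010$. The third and fourth of these computations are specific to $\EnsNat_2$: they rest on $1 + 1 \equiv 0 \pmod 2$, which is the precise reason these extra identifications appear in $\Comp$ but not in the parent operad $\FCat{1}$ (of which $\Comp$ is a quotient, as noted in the text).

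For condition~\ref{item:ReecritureConditionFN}, I would orient each relation left-to-right, so that in each oriented rule no internal node of the right-hand side has an internal node as its left child. Termination is then immediate, since every rewriting strictly increases the weight $\Poids$ introduced in Section~\ref{sec:Operades}, exactly as in the proofs for $\Motz$ and $\Schr$. The normal forms are therefore the right combs in $\CalF(\{\CompOpA, \CompOpB\})$ with internal nodes freely labeled by either generator; writing the regular specification in the paper's style (a leaf production together with one production for each generator, whose left subtree is a leaf and right subtree is $\CalS$) yields
\begin{equation}
    F(t) = t + 2 t F(t) = \sum_{n \geq 1} 2^{n-1} t^n.
\end{equation}
By Proposition~\ref{prop:ElemComp} this coincides with the Hilbert series of $\Comp$, so Lemma~\ref{lem:PresentationReecriture} delivers the claimed presentation. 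I do not anticipate any substantive obstacle: the plan is entirely parallel to the already-treated cases, and the only genuinely new ingredient is the modular arithmetic in $\EnsNat_2$ needed to validate the last two relations.
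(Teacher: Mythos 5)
Your proposal follows the paper's proof step for step: the same left-to-right orientation of the four relations, the same termination argument via the weight~$\Poids$, the same identification of the normal forms as right combs giving $2^{n-1}$ elements of arity~$n$, and the same conclusion via Lemma~\ref{lem:PresentationReecriture}. The only flaw is a computational slip in your verification of condition~({\it i}) of that lemma: the third relation $\CompOpB \circ_1 \CompOpB \leftrightarrow \CompOpB \circ_2 \CompOpA$ evaluates to $011$, not $010$ (the four equivalence classes are those of $000$, $001$, $011$, $010$), and it involves no modular reduction at all --- it already holds in $\T\EnsNat$, being the $i = 1$, $j = 0$ instance of the relations of Theorem~\ref{thm:PresentationFCatk} for $\FCat{1}$. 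Only the fourth relation, $\CompOpA \circ_1 \CompOpB \leftrightarrow \CompOpB \circ_2 \CompOpB$, is the genuinely new identification coming from $1 + 1 \equiv 0$ in $\EnsNat_2$. Since the corrected evaluations still agree pairwise, the argument goes through unchanged.
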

\begin{proof}
    First, note that by replacing~$\CompOpA$ by~$00 \in \Comp(2)$
    and~$\CompOpB$ by~$01 \in \Comp(2)$, we have~$\Eval(x) = \Eval(y)$
    for the four relations~$x \leftrightarrow y$ of the statement of
    the Theorem. Indeed, the four equivalence classes are, respectively,
    the ones of the elements~$000$, $001$, $011$, and~$100$ of~$\Comp$.
    \smallskip

    Consider now the orientation of~$\leftrightarrow$ into the rewrite
    rule~$\mapsto$ defined by

    \begin{minipage}{.45\textwidth}
    \begin{equation}
        \begin{split}\scalebox{.35}{\begin{tikzpicture}
            \node[Feuille](0)at(0.0,-2){};
            \node[Noeud,EtiqClair,minimum size=12mm](1)at(1.0,-1){\scalebox{2}{\CompOpA}};
            \node[Feuille](2)at(2.0,-2){};
            \draw[Arete](1)--(0);
            \draw[Arete](1)--(2);
            \node[Noeud,EtiqClair,minimum size=12mm](3)at(3.0,0){\scalebox{2}{\CompOpA}};
            \node[Feuille](4)at(4.0,-1){};
            \draw[Arete](3)--(1);
            \draw[Arete](3)--(4);
        \end{tikzpicture}}\end{split}
        \begin{split}\enspace \mapsto \enspace \end{split}
        \begin{split}\scalebox{.35}{\begin{tikzpicture}
            \node[Feuille](0)at(0.0,-1){};
            \node[Noeud,EtiqClair,minimum size=12mm](1)at(1.0,0){\scalebox{2}{\CompOpA}};
            \node[Feuille](2)at(2.0,-2){};
            \node[Noeud,EtiqClair,minimum size=12mm](3)at(3.0,-1){\scalebox{2}{\CompOpA}};
            \node[Feuille](4)at(4.0,-2){};
            \draw[Arete](3)--(2);
            \draw[Arete](3)--(4);
            \draw[Arete](1)--(0);
            \draw[Arete](1)--(3);
        \end{tikzpicture}}\end{split},
    \end{equation}
    \begin{equation}
        \begin{split}\scalebox{.35}{\begin{tikzpicture}
            \node[Feuille](0)at(0.0,-2){};
            \node[Noeud,EtiqClair,minimum size=12mm](1)at(1.0,-1){\scalebox{1.8}{\CompOpA}};
            \node[Feuille](2)at(2.0,-2){};
            \draw[Arete](1)--(0);
            \draw[Arete](1)--(2);
            \node[Noeud,EtiqClair,minimum size=12mm](3)at(3.0,0){\scalebox{1.8}{\CompOpB}};
            \node[Feuille](4)at(4.0,-1){};
            \draw[Arete](3)--(1);
            \draw[Arete](3)--(4);
        \end{tikzpicture}}\end{split}
        \begin{split}\enspace \mapsto \enspace \end{split}
        \begin{split}\scalebox{.35}{\begin{tikzpicture}
            \node[Feuille](0)at(0.0,-1){};
            \node[Noeud,EtiqClair,minimum size=12mm](1)at(1.0,0){\scalebox{1.8}{\CompOpA}};
            \node[Feuille](2)at(2.0,-2){};
            \node[Noeud,EtiqClair,minimum size=12mm](3)at(3.0,-1){\scalebox{1.8}{\CompOpB}};
            \node[Feuille](4)at(4.0,-2){};
            \draw[Arete](3)--(2);
            \draw[Arete](3)--(4);
            \draw[Arete](1)--(0);
            \draw[Arete](1)--(3);
        \end{tikzpicture}}\end{split},
    \end{equation}
    \end{minipage}
    \begin{minipage}{.45\textwidth}
    \begin{equation}
        \begin{split}\scalebox{.35}{\begin{tikzpicture}
            \node[Feuille](0)at(0.0,-2){};
            \node[Noeud,EtiqClair,minimum size=12mm](1)at(1.0,-1){\scalebox{1.8}{\CompOpB}};
            \node[Feuille](2)at(2.0,-2){};
            \draw[Arete](1)--(0);
            \draw[Arete](1)--(2);
            \node[Noeud,EtiqClair,minimum size=12mm](3)at(3.0,0){\scalebox{1.8}{\CompOpB}};
            \node[Feuille](4)at(4.0,-1){};
            \draw[Arete](3)--(1);
            \draw[Arete](3)--(4);
        \end{tikzpicture}}\end{split}
        \begin{split}\enspace \mapsto \enspace \end{split}
        \begin{split}\scalebox{.35}{\begin{tikzpicture}
            \node[Feuille](0)at(0.0,-1){};
            \node[Noeud,EtiqClair,minimum size=12mm](1)at(1.0,0){\scalebox{1.8}{\CompOpB}};
            \node[Feuille](2)at(2.0,-2){};
            \node[Noeud,EtiqClair,minimum size=12mm](3)at(3.0,-1){\scalebox{1.8}{\CompOpA}};
            \node[Feuille](4)at(4.0,-2){};
            \draw[Arete](3)--(2);
            \draw[Arete](3)--(4);
            \draw[Arete](1)--(0);
            \draw[Arete](1)--(3);
        \end{tikzpicture}}\end{split},
    \end{equation}
    \begin{equation}
        \begin{split}\scalebox{.35}{\begin{tikzpicture}
            \node[Feuille](0)at(0.0,-2){};
            \node[Noeud,EtiqClair,minimum size=12mm](1)at(1.0,-1){\scalebox{1.8}{\CompOpB}};
            \node[Feuille](2)at(2.0,-2){};
            \draw[Arete](1)--(0);
            \draw[Arete](1)--(2);
            \node[Noeud,EtiqClair,minimum size=12mm](3)at(3.0,0){\scalebox{1.8}{\CompOpA}};
            \node[Feuille](4)at(4.0,-1){};
            \draw[Arete](3)--(1);
            \draw[Arete](3)--(4);
        \end{tikzpicture}}\end{split}
        \begin{split}\enspace \mapsto \enspace \end{split}
        \begin{split}\scalebox{.35}{\begin{tikzpicture}
            \node[Feuille](0)at(0.0,-1){};
            \node[Noeud,EtiqClair,minimum size=12mm](1)at(1.0,0){\scalebox{1.8}{\CompOpB}};
            \node[Feuille](2)at(2.0,-2){};
            \node[Noeud,EtiqClair,minimum size=12mm](3)at(3.0,-1){\scalebox{1.8}{\CompOpB}};
            \node[Feuille](4)at(4.0,-2){};
            \draw[Arete](3)--(2);
            \draw[Arete](3)--(4);
            \draw[Arete](1)--(0);
            \draw[Arete](1)--(3);
        \end{tikzpicture}}\end{split}.
    \end{equation}
    \end{minipage}

    This rewrite rule is terminating. Indeed, it is plain that for any
    rewriting~$T_0 \to T_1$, we have~$\Poids(T_0) < \Poids(T_1)$.
    \smallskip

    Moreover, the normal forms of~$\mapsto$ are all syntax trees
    of~$\CalF\left(\left\{\CompOpA, \CompOpB\right\}\right)$ which have
    no internal node with an internal node as leftmost child. Hence, the
    generating series~$F(t)$ of the normal forms of~$\mapsto$ is
    \begin{equation}
        F(t) = \sum_{n \geq 1} 2^{n - 1} t^n.
    \end{equation}
    By Proposition~\ref{prop:ElemComp},~$F(t)$ also is the Hilbert
    series of~$\Comp$.
    \smallskip

    Hence, by Lemma~\ref{lem:PresentationReecriture},~$\Comp$ admits
    the claimed presentation.
\end{proof}
\medskip

\subsubsection{A ns operad on directed animals}
Let~$\AnD$ be the ns suboperad of~$\T \EnsNat_3$ generated by~$00$ and~$01$.
We shall here denote by $\bar 1$ the representative of the equivalence
class of~$2$ in~$\EnsNat_3$. The following table shows the first elements
of~$\AnD$.
\begin{center}
    \renewcommand{\arraystretch}{1.2}
    \begin{tabular}{c|p{11cm}}
        Arity & Elements of~$\AnD$ \\ \hline \hline
        $1$ & $0$ \\ \hline
        $2$ & $00$, $01$ \\ \hline
        $3$ & $000$, $001$, $010$, $011$, $01\bar1 $ \\ \hline
        $4$ & $0000$, $0001$, $0010$, $0011$, $001\bar1 $, $0100$, $0101$,
              $0110$, $0111$, $011\bar1$, $01\bar1 0$, $01\bar1 1$, $01\bar1 \bar1$ \\ \hline
        $5$ & $00000$, $00001$, $00010$, $00011$, $0001\bar1$, $00100$, $00101$,
              $00110$, $00111$, $0011\bar1$, $001\bar10$, $001\bar11$, $001\bar1\bar1$,
              $01000$, $01001$, $01010$, $01011$, $0101\bar1$, $01100$, $01101$, $01110$,
              $01111$, $0111\bar1$, $011\bar10$, $011\bar11$, $011\bar1\bar1$,
              $01\bar100$, $01\bar101$, $01\bar10\bar1$, $01\bar110$, $01\bar111$,
              $01\bar11\bar1$, $01\bar1\bar10$, $01\bar1\bar11$, $01\bar1\bar1\bar1$
    \end{tabular}
\end{center}
\medskip

Since~$\FCat{1}$ is the ns suboperad of~$\T \EnsNat$ generated by~$00$
and~$01$, and since~$\T \EnsNat_3$ is a quotient of~$\T \EnsNat$,
$\AnD$ is a quotient of~$\FCat{1}$. Moreover,~$\AnD$ is a quotient
of~$\FCat{0}$ by the ns operadic congruence~$\equiv$ defined for
all~$x, y \in \AnD$ by~$x \equiv y$.
\medskip

\begin{Proposition} \label{prop:BijAnDPrefMotz}
    Let~$\phi_\AnD : \AnD(n) \to \left\{\bar 1, 0, 1\right\}^{n - 1}$
    the mapping defined for any element~$x$ of arity~$n$ of~$\AnD$ by
    \begin{equation}
        \phi_\AnD(x) := (x_1 * x_2, x_2 * x_3, \dots, x_{n - 1} * x_n),
    \end{equation}
    where~$x_i * x_{i + 1} := x_{i + 1} - x_i \mod 3$.
    Then, $\phi_\AnD$ is a bijection between the elements of arity~$n$
    of~$\AnD$ and prefixes of Motzkin words of length~$n - 1$.
\end{Proposition}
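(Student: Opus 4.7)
The plan is to verify the three defining properties of a bijection — injectivity, well-definedness (image lies among Motzkin prefixes), and surjectivity — in turn. Throughout, I identify the letters $0, 1, \bar 1$ of $\EnsNat_3$ with the integers $0, 1, -1$ when computing partial sums.

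Injectivity is immediate: by Lemma~\ref{lem:GenerationOpNSEns} and the fact that both generators $00$ and $01$ of $\AnD$ start with $0$, every element $x$ of $\AnD$ satisfies $x_1 = 0$. From $\phi_\AnD(x) = (d_1, \dots, d_{n-1})$ one recovers $x$ by $x_{i+1} := x_i + d_i \bmod 3$.

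For well-definedness I would induct on the arity $n$. The case $n = 1$ gives the empty word, a trivial Motzkin prefix. For $n \geq 2$, Lemma~\ref{lem:GenerationOpNSEns} provides a factorization $x = y \circ_i g$ with $y \in \AnD(n-1)$ and $g \in \{00, 01\}$. Unpacking the definition of $\circ_i$ in $\T \EnsNat_3$, the word $\phi_\AnD(x)$ is built from $\phi_\AnD(y)$ in one of three ways: by inserting a $0$ at position $i$ (when $g = 00$); by appending a $1$ (when $g = 01$ and $i = n-1$); or by replacing the $i$-th letter $d$ of $\phi_\AnD(y)$ with the two-letter factor $1, (d - 1 \bmod 3)$ (when $g = 01$ and $i < n-1$). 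In each case the partial sums of $\phi_\AnD(x)$ either coincide with those of $\phi_\AnD(y)$ or insert a value one greater than a neighboring value, so they remain nonnegative by the induction hypothesis.

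For surjectivity I would induct on $n$. Given a Motzkin prefix $w$ of length $n-1$, define $x$ by $x_1 := 0$ and $x_{i+1} := x_i + w_i \bmod 3$, so $\phi_\AnD(x) = w$; it suffices to exhibit $x$ as a graft of an element of $\AnD(n-1)$ with a generator. I split into three cases. \textbf{(A)} If some letter $w_j$ equals $0$, delete it to obtain a shorter Motzkin prefix $w'$; applying the induction hypothesis to $w'$ produces $y \in \AnD(n-1)$ with $y \circ_j 00 = x$. \textbf{(B)} If $w$ has no $0$ and ends in $1$, truncate the last letter and graft $01$ at position $n-1$. \textbf{(C)} Otherwise $w$ consists entirely of letters in $\{1, \bar 1\}$ and ends in $\bar 1$; since the first partial sum must be nonnegative, $w$ begins with $1$, hence contains a factor $w_j w_{j+1} = 1\bar 1$. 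Replace this factor by the single letter $0$ to obtain a word $w'$ of length $n-2$ whose partial sums form a subsequence of those of $w$, hence $w'$ is again a Motzkin prefix; the corresponding $y \in \AnD(n-1)$ satisfies $y \circ_j 01 = x$ by a direct check.

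The main obstacle is case \textbf{(C)}: one must argue the existence of the factor $1\bar 1$, verify that its contraction preserves the Motzkin-prefix property, and confirm that $y \circ_j 01$ indeed reproduces $x$ (which uses $x_{j+2} = x_j$, exactly what the factor $1\bar 1$ encodes). Once this is settled, the remaining steps are routine verifications from the definition of $\circ_i$ in $\T \EnsNat_3$ together with Lemma~\ref{lem:GenerationOpNSEns}.
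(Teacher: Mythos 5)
Your proposal is correct and follows essentially the same route as the paper's proof: injectivity via recovering $x$ from its difference word (using $x_1 = 0$), well-definedness by induction through Lemma~\ref{lem:GenerationOpNSEns} with the same three insertion moves (insert a $0$, append a $1$, replace a letter $d$ by the factor $1,(d-1)$), and surjectivity by inverting those moves. The only divergence is cosmetic: the paper organizes the surjectivity induction by the last letter of the Motzkin prefix (appending when $u_{n} \in \{0,1\}$, otherwise locating the last $1$ and examining what follows it), whereas you split on whether a $0$ occurs anywhere before contracting a $1\bar 1$ factor; both reduce to the same graftings $y \circ_j 00$ and $y \circ_j 01$.
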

\begin{proof}
    Let us first show by induction on the length of the words
    that for any~$x \in \AnD$, $\phi_\AnD(x)$ is a prefix of a Motzkin
    word of length~$|x| - 1$. This is true when~$|x| = 1$. When~$|x| \geq 2$, by
    Lemma~\ref{lem:GenerationOpNSEns}, there is an element~$y$ of~$\AnD$
    of length~$n := |x| - 1$, an integer~$i \in [n]$, and~$g \in \{00, 01\}$
    such that~$x = y \circ_i g$. We now have two cases depending on~$g$.
    \begin{enumerate}[label = {\it Case \arabic*.}, fullwidth]
        \item If~$g = 00$, then
        \begin{equation}
            x = (y_1, \dots, y_i, y_i, y_{i + 1}, \dots, y_n).
        \end{equation}
        By induction hypothesis,~$\phi_\AnD(y)$ is a prefix a Motzkin word
        of length~$n - 1$. Since~$x$ is obtained from~$y$ by duplicating
        its $i$th letter,~$\phi_\AnD(x)$ is obtained from~$\phi_\AnD(y)$
        by inserting a~$0$ at an appropriate place. Hence,~$\phi_\AnD(x)$ is
        a prefix of a Motzkin word of length~$n$.
        \smallskip

        \item  Otherwise, we have~$g = 01$ and then,
        \begin{equation}
            x = (y_1, \dots, y_i, y_i + 1, y_{i + 1}, \dots, y_n),
        \end{equation}
        where~$+$ denotes the addition in~$\EnsNat_3$. We have now two sub-cases
        whether~$y_i$ is the last letter of~$y$.
        \begin{enumerate}[label = {\it Case \arabic{enumi}.\arabic*.}, fullwidth]
            \item If it is the case, then~$x_{i + 1} = y_i + 1$ is the last letter of~$x$
            and~$\phi_\AnD(x)$ is obtained from~$\phi_\AnD(y)$ by concatenating
            a~$1$ on the right. Hence, since by induction hypothesis,
            $\phi_\AnD(y)$ is a prefix of a Motzkin word of length~$n - 1$,
            $\phi_\AnD(x)$ is a prefix of a Motzkin word of length~$n$.
            \smallskip

            \item Otherwise, we have~$i < n$. We observe that~$\phi_\AnD(x)$
            is obtained from~$\phi_\AnD(y)$ by replacing a letter~$0$
            (resp. $1$, $\bar 1$) by a factor~$1\bar 1$ (resp. $10$, $11$)
            at an appropriate place. Hence, since by induction hypothesis,
            $\phi_\AnD(y)$ is a prefix of a Motzkin word of length~$n - 1$,
            $\phi_\AnD(x)$ is a prefix of a Motzkin word of length~$n$.
        \end{enumerate}
    \end{enumerate}
    \smallskip

    Let us now show that~$\phi_\AnD$ is a bijection between the elements
    of arity~$n$ of~$\AnD$ and prefixes of Motzkin words of length~$n - 1$.
    \smallskip

    The injectivity of~$\phi_\AnD$ is a direct consequence of the fact that,
    given a element~$x$ of~$\AnD$ and a letter~$\La \in \{\bar 1, 0, 1\}$,
    there is at most one letter~$\Lb \in \{\bar 1, 0, 1\}$ such
    that~$\phi_\AnD(x \Lb) = \phi_\AnD(x) \La$.
    \smallskip

    Let us finally show that~$\phi_\AnD$ is a surjection. We proceed by
    induction on the length of the words to construct for any prefix of a
    Motzkin word~$u$ an element~$x$ of~$\AnD$ such that~$\phi_\AnD(x) = u$.
    When~$u$ is the empty word,~$x := 0$ is an element of~$\AnD(1)$ and
    since~$\phi_\AnD(x)$ is the empty word, the property is satisfied.
    When~$n := |u| \geq 1$, one has two cases to consider depending on
    the last letter~$u_n$ of~$u$.
    \begin{enumerate}[label = {\it Case \arabic*'.}, fullwidth]
        \item If~$u_n \in \{0, 1\}$, by induction hypothesis, there is an
        element~$y$ of~$\AnD(n)$ such that $\phi_\AnD(y) = u_1 \dots u_{n - 1}$.
        Hence, by setting~$x := y \circ_n 0 u_n$, its follows, by definition
        of~$\phi_\AnD$, that~$x$ is a preimage of~$u$ for~$\phi_\AnD$.
        \smallskip

        \item Otherwise, we have~$u_n = \bar 1$ and there is at least one
        occurrence of a~$1$ in~$u$. Hence, let~$i \in [n - 1]$ be the greatest
        integer such that~$u_i = 1$. We now have two sub-cases depending on
        the value of~$u_{i + 1}$.
        \begin{enumerate}[label = {\it Case \arabic{enumi}'.\arabic*.}, fullwidth]
            \item If~$u_{i + 1} = 0$, the word
            \begin{equation}
                u' := u_1 \dots u_i u_{i + 2} \dots u_n
            \end{equation}
            is still a prefix of a Motzkin word. Then, by induction hypothesis,
            there is an element~$y$ of~$\AnD(n)$ such that~$\phi_\AnD(y) = u'$.
            Hence, by setting~$x := y \circ_i 00$, its follows, by definition
            of~$\phi_\AnD$, that~$x$ is a preimage of~$u$ for~$\phi_\AnD$.
            \smallskip

            \item Otherwise, we have~$u_{i + 1} = \bar 1$. Then, the word
            \begin{equation}
                u' := u_1 \dots u_{i - 1} 0 u_{i + 2} \dots u_n
            \end{equation}
            is still a prefix of a Motzkin word. Then, by induction
            hypothesis, there is an element~$y$ of~$\AnD(n)$ such
            that~$\phi_\AnD(y) = u'$. Hence by setting~$x := y \circ_i 01$,
            its follows, by definition of~$\phi_\AnD$, that~$x$ is a
            preimage of~$u$ for~$\phi_\AnD$.
        \end{enumerate}
    \end{enumerate}
    \smallskip

    We then have proved that~$\phi_\AnD$ is well-defined, injective, and
    surjective. Hence, it is a bijection between elements of arity~$n$
    of~$\AnD$ and prefixes of Motzkin words of length~$n - 1$.
\end{proof}
\medskip

Here are two examples of images by~$\phi_\AnD$ of elements of~$\AnD$.
\begin{equation}
    \phi_\AnD(011\bar 1\bar 10\bar 101) = 10101\bar111,
\end{equation}
\begin{equation}
    \phi_\AnD(010010101\bar 11) = 1\bar101\bar11\bar111\bar1.
\end{equation}
\medskip

Recall that a {\em directed animal} is a subset~$A$ of~$\EnsNat^2$ such
that~$(0, 0) \in A$ and $(i, j) \in A$ with~$i \geq 1$ or~$j \geq 1$
implies~$(i - 1, j) \in A$ or~$(i, j - 1) \in A$. The size of a directed
animal~$A$ is its cardinality. Figure~\ref{fig:AnimalD} shows a
directed animal.
\begin{figure}[ht]
    \centering
    \scalebox{.3}{
    \begin{tikzpicture}
        \draw[Grille] (0,0) grid (7,6);
        \node[SommetAnimal](0)at(0,0){};
        \node[SommetAnimal](1)at(1,0){};
        \node[SommetAnimal](2)at(1,1){};
        \node[SommetAnimal](3)at(1,2){};
        \node[SommetAnimal](4)at(1,3){};
        \node[SommetAnimal](5)at(1,4){};
        \node[SommetAnimal](6)at(1,5){};
        \node[SommetAnimal](7)at(1,6){};
        \node[SommetAnimal](8)at(2,1){};
        \node[SommetAnimal](9)at(2,2){};
        \node[SommetAnimal](10)at(2,4){};
        \node[SommetAnimal](11)at(3,2){};
        \node[SommetAnimal](12)at(3,3){};
        \node[SommetAnimal](13)at(3,4){};
        \node[SommetAnimal](14)at(4,3){};
        \node[SommetAnimal](15)at(5,3){};
        \node[SommetAnimal](16)at(5,4){};
        \node[SommetAnimal](17)at(5,5){};
        \node[SommetAnimal](18)at(6,3){};
        \node[SommetAnimal](19)at(6,5){};
        \node[SommetAnimal](20)at(7,5){};
        \draw[PasDyck](0)--(1);
        \draw[PasDyck](1)--(2);
        \draw[PasDyck](2)--(3);
        \draw[PasDyck](3)--(4);
        \draw[PasDyck](4)--(5);
        \draw[PasDyck](5)--(6);
        \draw[PasDyck](6)--(7);
        \draw[PasDyck](2)--(8);
        \draw[PasDyck](5)--(10);
        \draw[PasDyck](10)--(13);
        \draw[PasDyck](8)--(9);
        \draw[PasDyck](9)--(11);
        \draw[PasDyck](11)--(12);
        \draw[PasDyck](12)--(13);
        \draw[PasDyck](12)--(14);
        \draw[PasDyck](14)--(15);
        \draw[PasDyck](15)--(18);
        \draw[PasDyck](15)--(16);
        \draw[PasDyck](16)--(17);
        \draw[PasDyck](17)--(19);
        \draw[PasDyck](19)--(20);
        \draw[PasDyck](3)--(9);
    \end{tikzpicture}}
    \caption{A directed animal of size~$21$. The point~$(0, 0)$ is the
    lowest and leftmost point.}
    \label{fig:AnimalD}
\end{figure}
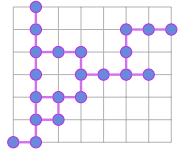
\medskip

According to~\cite{GBV88}, there is a bijection~$\alpha$ between the set
of prefixes of Motzkin words of length~$n - 1$ and the set of directed
animals of size~$n$. Hence, by Proposition~\ref{prop:BijAnDPrefMotz}, the
map~$\alpha \circ \phi_\AnD$ is a bijection between the elements of~$\AnD$
of arity~$n$ and directed animals of size~$n$ and moreover, $\AnD$ can
be seen as a ns operad on directed animals.
\medskip

\begin{Theoreme} \label{thm:PresentationAnD}
    The ns operad~$\AnD$ admits the presentation
    \begin{equation*}
        \AnD = \CalF\left(\left\{\AnDOpA, \AnDOpB\right\}\right)/_\equiv,
    \end{equation*}
    where~$\AnDOpA$ and~$\AnDOpB$ are of arity~$2$, and~$\equiv$ is the ns
    operadic congruence generated by

    \begin{minipage}{.45\textwidth}
    \begin{equation}
        \AnDOpA \circ_1 \AnDOpA
            \: \leftrightarrow \: \AnDOpA \circ_2 \AnDOpA,
    \end{equation}
    \begin{equation}
        \AnDOpB \circ_1 \AnDOpA
            \: \leftrightarrow \: \AnDOpA \circ_2 \AnDOpB,
    \end{equation}
    \end{minipage}
    \begin{minipage}{.45\textwidth}
    \begin{equation}
        \AnDOpB \circ_1 \AnDOpB
            \: \leftrightarrow \: \AnDOpB \circ_2 \AnDOpA,
    \end{equation}
    \begin{equation} \label{eq:RelAnDDeg3}
        (\AnDOpA \circ_1 \AnDOpB) \circ_2 \AnDOpB
            \: \leftrightarrow \:
        (\AnDOpB \circ_2 \AnDOpB) \circ_3 \AnDOpB.
    \end{equation}
    \end{minipage}
\end{Theoreme}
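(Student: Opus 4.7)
The plan is to follow the rewrite-system approach of the previous presentation theorems and apply Lemma~\ref{lem:PresentationReecriture} with the generators $\AnDOpA$ and $\AnDOpB$ interpreted as the elements $00, 01 \in \AnD(2) \subseteq \T \EnsNat_3(2)$.

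I first verify condition~(i) by checking each of the four relations under the evaluation morphism $\Eval : \CalF(\{\AnDOpA, \AnDOpB\}) \to \AnD$. The three quadratic relations yield the common values $000$, $001$, and $011$ respectively, by the same computations used in the proofs of Theorems~\ref{thm:PresentationFCatk} and~\ref{thm:PresentationComp}. The cubic relation~\eqref{eq:RelAnDDeg3} is the genuinely new feature: both sides evaluate to $(0, 1, 2, 0) \in \AnD(4)$, the right-hand side giving this value only because $2 + 1 = 3 \equiv 0 \pmod 3$ in $\EnsNat_3$. This modular collapse is exactly what forces a nontrivial relation beyond the quadratic ones, and it first appears at degree~$3$.

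Next, I orient $\leftrightarrow$ into a rewrite rule $\mapsto$ by reading each relation from left to right. Termination is shown with the weight statistic $\Poids$ of Section~\ref{sec:Operades}: each of the three quadratic rewrites strictly increases $\Poids$, as already observed in the proof of Theorem~\ref{thm:PresentationComp}; and for the cubic rewrite the left tree has $\Poids = 1$ (only the inner $\AnDOpB$ nested in the left-child $\AnDOpB$ contributes), while the right tree, being the right comb of three $\AnDOpB$s, has $\Poids = 3$, so the rewriting is weight-increasing.

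Condition~(ii) then requires matching the number of normal forms of $\mapsto$ in each arity with the Hilbert dimension of $\AnD$, which by Proposition~\ref{prop:BijAnDPrefMotz} equals the number of prefixes of Motzkin words of length $n - 1$. I describe the normal forms by a regular specification in the style of Theorems~\ref{prop:RelationsGenMotz} and~\ref{thm:PresentationComp}, tracking the allowed left children of $\AnDOpA$-nodes and the allowed right children of $\AnDOpB$-nodes that sit below them, and derive an algebraic functional equation for the generating series of normal forms; comparing it against the algebraic equation satisfied by the Motzkin-prefix series yields the counting match. The main obstacle is precisely this counting step: unlike in all the previous theorems, the cubic relation interacts with the three quadratic ones to create non-trivial critical pairs, and one must handle these overlaps carefully (for instance by an appropriate refinement of the orientation or by a direct analysis of the few problematic small-arity configurations) in order to land on exactly the Motzkin-prefix series. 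Once this is secured, Lemma~\ref{lem:PresentationReecriture} yields the claimed presentation.
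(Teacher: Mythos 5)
Your verification of the first condition of Lemma~\ref{lem:PresentationReecriture} is correct --- in particular both sides of the cubic relation do evaluate to $01\bar{1}0 = (0,1,2,0) \in \AnD(4)$ --- and your termination argument via $\Poids$ is sound for the orientation you chose. The genuine gap is the counting condition, which you explicitly defer, and which in fact \emph{cannot} be met with the left-to-right orientation you commit to. Under that orientation the normal forms are the trees in which every $\AnDOpB$-node has a leaf as left child, and every $\AnDOpA$-node has as left child either a leaf or a $\AnDOpB$-node whose right child is not a $\AnDOpB$-node; the corresponding generating series satisfies $F(t) = t + 2tF(t) + \left(t - t^2\right)F(t)^2$, with coefficients $1, 2, 5, 13, 36, \dots$, so already in arity~$5$ there are $36$ normal forms against the $35$ elements of $\AnD(5)$. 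The defect is exactly the critical pair you mention but do not resolve: the tree $\AnDOpA \circ_1 \bigl(\left(\AnDOpB \circ_1 \AnDOpB\right) \circ_3 \AnDOpB\bigr)$ reduces either by your third rule (applied at the left child of the root) to a normal form rooted at $\AnDOpA$, or by your fourth rule at the root followed by the third rule to a normal form rooted at $\AnDOpB$; these are two distinct normal forms in the same $\equiv$-class (both evaluating to $011\bar{1}0$), so condition~({\it ii}) of Lemma~\ref{lem:PresentationReecriture} fails.

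The paper's proof avoids this by orienting the third and fourth relations in the \emph{opposite} direction, namely $\AnDOpB \circ_2 \AnDOpA \mapsto \AnDOpB \circ_1 \AnDOpB$ and $\left(\AnDOpB \circ_2 \AnDOpB\right) \circ_3 \AnDOpB \mapsto \left(\AnDOpA \circ_1 \AnDOpB\right) \circ_2 \AnDOpB$, while keeping the first two as you have them. The normal forms are then the trees where no $\AnDOpA$-node has an $\AnDOpA$-node as left child, no $\AnDOpB$-node has an $\AnDOpA$-node as a child, and no $\AnDOpB$-node has a right child labeled $\AnDOpB$ whose right child is again labeled $\AnDOpB$. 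These decompose as $\CalS = \CalT + {}$($\AnDOpA$-node with left subtree in $\CalT$ and right subtree in $\CalS$), where $\CalT$ consists of the all-$\AnDOpB$ normal forms and is counted by the Motzkin series $T = t + tT + tT^2$; hence $\CalS$ is counted by $T/(1-T)$, the directed-animal series, matching Proposition~\ref{prop:BijAnDPrefMotz}. The price of this reorientation is that $\Poids$ no longer increases along rewritings, and termination must instead be argued with the lexicographic pair $\left(-k_T, \Poids(T)\right)$, where $k_T$ sums over the $\AnDOpB$-nodes the number of internal nodes of their right subtrees. So your outline is recoverable in spirit, but the step you flag as ``the main obstacle'' is the actual mathematical content of the proof, and the specific orientation you propose must be replaced, not merely refined.
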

\begin{proof}
    First, note that by replacing~$\AnDOpA$ by~$00 \in \AnD(2)$ and~$\AnDOpB$
    by~$01 \in \AnD(2)$, we have~$\Eval(x) = \Eval(y)$ for the four
    relations~$x \leftrightarrow y$ of the statement of the Theorem.
    Indeed, the four equivalence classes are, respectively, the ones of the
    elements~$000$, $001$, $011$, and~$01\bar10$ of~$\AnD$.
    \smallskip

    Consider now the orientation of~$\leftrightarrow$ into the rewrite
    rule~$\mapsto$ defined by

    \begin{minipage}{.45\textwidth}
    \begin{equation} \label{eq:RelOrientationAnD1}
        \begin{split}\scalebox{.35}{\begin{tikzpicture}
            \node[Feuille](0)at(0.0,-2){};
            \node[Noeud,EtiqClair,minimum size=12mm](1)at(1.0,-1){\scalebox{2}{\AnDOpA}};
            \node[Feuille](2)at(2.0,-2){};
            \draw[Arete](1)--(0);
            \draw[Arete](1)--(2);
            \node[Noeud,EtiqClair,minimum size=12mm](3)at(3.0,0){\scalebox{2}{\AnDOpA}};
            \node[Feuille](4)at(4.0,-1){};
            \draw[Arete](3)--(1);
            \draw[Arete](3)--(4);
        \end{tikzpicture}}\end{split}
        \begin{split}\enspace \mapsto \enspace \end{split}
        \begin{split}\scalebox{.35}{\begin{tikzpicture}
            \node[Feuille](0)at(0.0,-1){};
            \node[Noeud,EtiqClair,minimum size=12mm](1)at(1.0,0){\scalebox{2}{\AnDOpA}};
            \node[Feuille](2)at(2.0,-2){};
            \node[Noeud,EtiqClair,minimum size=12mm](3)at(3.0,-1){\scalebox{2}{\AnDOpA}};
            \node[Feuille](4)at(4.0,-2){};
            \draw[Arete](3)--(2);
            \draw[Arete](3)--(4);
            \draw[Arete](1)--(0);
            \draw[Arete](1)--(3);
        \end{tikzpicture}}\end{split},
    \end{equation}
    \begin{equation} \label{eq:RelOrientationAnD2}
        \begin{split}\scalebox{.35}{\begin{tikzpicture}
            \node[Feuille](0)at(0.0,-2){};
            \node[Noeud,EtiqClair,minimum size=12mm](1)at(1.0,-1){\scalebox{2}{\AnDOpA}};
            \node[Feuille](2)at(2.0,-2){};
            \draw[Arete](1)--(0);
            \draw[Arete](1)--(2);
            \node[Noeud,EtiqClair,minimum size=12mm](3)at(3.0,0){\scalebox{2}{\AnDOpB}};
            \node[Feuille](4)at(4.0,-1){};
            \draw[Arete](3)--(1);
            \draw[Arete](3)--(4);
        \end{tikzpicture}}\end{split}
        \begin{split}\enspace \mapsto \enspace \end{split}
        \begin{split}\scalebox{.35}{\begin{tikzpicture}
            \node[Feuille](0)at(0.0,-1){};
            \node[Noeud,EtiqClair,minimum size=12mm](1)at(1.0,0){\scalebox{2}{\AnDOpA}};
            \node[Feuille](2)at(2.0,-2){};
            \node[Noeud,EtiqClair,minimum size=12mm](3)at(3.0,-1){\scalebox{2}{\AnDOpB}};
            \node[Feuille](4)at(4.0,-2){};
            \draw[Arete](3)--(2);
            \draw[Arete](3)--(4);
            \draw[Arete](1)--(0);
            \draw[Arete](1)--(3);
        \end{tikzpicture}}\end{split},
    \end{equation}
    \end{minipage}
    \begin{minipage}{.5\textwidth}
    \begin{equation} \label{eq:RelOrientationAnD3}
        \begin{split}\scalebox{.35}{\begin{tikzpicture}
            \node[Feuille](0)at(0.0,-1){};
            \node[Noeud,EtiqClair,minimum size=12mm](1)at(1.0,0){\scalebox{2}{\AnDOpB}};
            \node[Feuille](2)at(2.0,-2){};
            \node[Noeud,EtiqClair,minimum size=12mm](3)at(3.0,-1){\scalebox{2}{\AnDOpA}};
            \node[Feuille](4)at(4.0,-2){};
            \draw[Arete](3)--(2);
            \draw[Arete](3)--(4);
            \draw[Arete](1)--(0);
            \draw[Arete](1)--(3);
        \end{tikzpicture}}\end{split}
        \begin{split}\enspace \mapsto \enspace \end{split}
        \begin{split}\scalebox{.35}{\begin{tikzpicture}
            \node[Feuille](0)at(0.0,-2){};
            \node[Noeud,EtiqClair,minimum size=12mm](1)at(1.0,-1){\scalebox{2}{\AnDOpB}};
            \node[Feuille](2)at(2.0,-2){};
            \draw[Arete](1)--(0);
            \draw[Arete](1)--(2);
            \node[Noeud,EtiqClair,minimum size=12mm](3)at(3.0,0){\scalebox{2}{\AnDOpB}};
            \node[Feuille](4)at(4.0,-1){};
            \draw[Arete](3)--(1);
            \draw[Arete](3)--(4);
        \end{tikzpicture}}\end{split},
    \end{equation}
    \begin{equation}
        \begin{split}\scalebox{.35}{\begin{tikzpicture}
            \node[Feuille](0)at(0.0,-1){};
            \node[Noeud,EtiqClair,minimum size=12mm](1)at(1.0,0){\scalebox{2}{\AnDOpB}};
            \node[Feuille](2)at(2.0,-2){};
            \node[Noeud,EtiqClair,minimum size=12mm](3)at(3.0,-1){\scalebox{2}{\AnDOpB}};
            \node[Feuille](4)at(4.0,-3){};
            \node[Noeud,EtiqClair,minimum size=12mm](5)at(5.0,-2){\scalebox{2}{\AnDOpB}};
            \node[Feuille](6)at(6.0,-3){};
            \draw[Arete](5)--(4);
            \draw[Arete](5)--(6);
            \draw[Arete](3)--(2);
            \draw[Arete](3)--(5);
            \draw[Arete](1)--(0);
            \draw[Arete](1)--(3);
        \end{tikzpicture}}\end{split}
        \begin{split}\enspace \mapsto \enspace \end{split}
        \begin{split}\scalebox{.35}{\begin{tikzpicture}
            \node[Feuille](0)at(0.0,-2){};
            \node[Noeud,EtiqClair,minimum size=12mm](1)at(1.0,-1){\scalebox{2}{\AnDOpB}};
            \node[Feuille](2)at(2.0,-3){};
            \node[Noeud,EtiqClair,minimum size=12mm](3)at(3.0,-2){\scalebox{2}{\AnDOpB}};
            \node[Feuille](4)at(4.0,-3){};
            \draw[Arete](3)--(2);
            \draw[Arete](3)--(4);
            \draw[Arete](1)--(0);
            \draw[Arete](1)--(3);
            \node[Noeud,EtiqClair,minimum size=12mm](5)at(5.0,0){\scalebox{2}{\AnDOpA}};
            \node[Feuille](6)at(6.0,-1){};
            \draw[Arete](5)--(1);
            \draw[Arete](5)--(6);
        \end{tikzpicture}}\end{split}.
    \end{equation}
    \end{minipage}

    This rewrite rule is terminating. Indeed, let~$T$ be a syntax tree
    of~$\CalF\left(\left\{\AnDOpA, \AnDOpB\right\}\right)$. By associating
    the pair~$(-k_T, \Poids(T))$ with~$T$, where~$k_T$ is the sum, for all
    internal nodes~$x$ of~$T$ labeled by~$\AnDOpB$, of the number of
    internal nodes constituting the right subtree of~$x$, it is plain that
    for any rewriting~$T_0 \to T_1$, we have~$k_{T_0} < k_{T_1}$,
    or~$k_{T_0} = k_{T_1}$ and $\Poids(T_0) < \Poids(T_1)$.
    \smallskip

    Moreover, the normal forms of~$\mapsto$ are all syntax trees
    of~$\CalF\left(\left\{\AnDOpA, \AnDOpB\right\}\right)$ such that no
    internal node labeled by~$\AnDOpA$ has a left child labeled by~$\AnDOpA$,
    no internal node labeled by~$\AnDOpB$ has a child labeled
    by~$\AnDOpA$, and no internal node labeled by~$\AnDOpB$ has a right
    child labeled by~$\AnDOpB$ which has a right child labeled by~$\AnDOpB$.
    This set~$\CalS$ of syntax trees admits the following regular specification
    \begin{equation}
        \begin{split} \CalS = \CalT \end{split}
        \enspace + \enspace
        \begin{split}\scalebox{.3}{\begin{tikzpicture}
            \node[Noeud,EtiqClair,minimum size=15mm](1)at(0,0)
                {\scalebox{2}{\AnDOpA}};
            \node(2)at(-1.5,-1.5){\Huge $\CalT$};
            \node(3)at(1.5,-1.5){\Huge $\CalS$};
            \draw[Arete](1)--(2);
            \draw[Arete](1)--(3);
        \end{tikzpicture}}\end{split},
    \end{equation}
    where~$\CalT$ is the set of syntax trees admitting the
    following regular specification
    \begin{equation}
        \begin{split} \CalT = \Feuille \end{split}
        \enspace + \enspace
        \begin{split}\scalebox{.3}{\begin{tikzpicture}
            \node[Noeud,EtiqClair,minimum size=15mm](1)at(0,0)
                {\scalebox{2}{\AnDOpB}};
            \node(2)at(-1.5,-1.5){\Huge $\CalT$};
            \node[Feuille](3)at(1.5,-1.5){};
            \draw[Arete](1)--(2);
            \draw[Arete](1)--(3);
        \end{tikzpicture}}\end{split}
        \enspace + \enspace
        \begin{split}\scalebox{.3}{\begin{tikzpicture}
            \node[Noeud,EtiqClair,minimum size=15mm](1)at(0,0)
                {\scalebox{2}{\AnDOpB}};
            \node(2)at(-1.5,-1.5){\Huge $\CalT$};
            \node[Noeud,EtiqClair,minimum size=15mm](3)at(2,-2)
                {\scalebox{2}{\AnDOpB}};
            \node(4)at(1,-3.5){\Huge $\CalT$};
            \node[Feuille](5)at(3,-3.5){};
            \draw[Arete](1)--(2);
            \draw[Arete](1)--(3);
            \draw[Arete](3)--(4);
            \draw[Arete](3)--(5);
        \end{tikzpicture}}\end{split}.
    \end{equation}

    Hence, the generating series~$F(t)$ of~$\CalS$ satisfies
    \begin{equation}
        F(t) = \frac{1 - 3t - (1 - 2t -3t^2)^{1/2}}{6t - 2},
    \end{equation}
    which is the generating function of directed animals. By
    Proposition~\ref{prop:BijAnDPrefMotz},~$F(t)$ also is the Hilbert
    series of~$\AnD$.
    \smallskip

    Hence, by Lemma~\ref{lem:PresentationReecriture},~$\AnD$ admits the
    claimed presentation.
\end{proof}
\medskip

Since the nontrivial relation~\eqref{eq:RelAnDDeg3} has degree~$3$, the
presentation of~$\AnD$ exhibited by Theorem~\ref{thm:PresentationAnD} is not
quadratic. Moreover, $\AnD$ is not a quadratic ns operad since, as an exhaustive
inspection can show, there is no quadratic ns operad generated by two generators
of arity~$2$ which has the same dimensions as~$\AnD$.
\medskip

\subsubsection{A ns operad on segmented integer compositions}
Let~$\SComp$ be the ns suboperad of~$\T \EnsNat_3$ generated by~$00$, $01$,
and~$02$. The following table shows the first elements of~$\SComp$.
\begin{center}
    \begin{tabular}{c|p{11cm}}
        Arity & Elements of~$\SComp$ \\ \hline \hline
        $1$ & $0$ \\ \hline
        $2$ & $00$, $01$, $02$ \\ \hline
        $3$ & $000$, $001$, $002$, $010$, $011$, $012$, $020$, $021$, $022$ \\ \hline
        $4$ & $0000$, $0001$, $0002$, $0010$, $0011$, $0012$, $0020$, $0021$,
              $0022$, $0100$, $0101$, $0102$, $0110$, $0111$, $0112$, $0120$,
              $0121$, $0122$, $0200$, $0201$, $0202$, $0210$, $0211$, $0212$,
              $0220$, $0221$, $0222$
    \end{tabular}
\end{center}
\medskip

Since~$\FCat{2}$ is the ns suboperad of~$\T \EnsNat$ generated by~$00$, $01$,
and~$02$, and since~$\T \EnsNat_3$ is a quotient of~$\T \EnsNat$, $\SComp$
is a quotient of~$\FCat{2}$. Moreover, since~$\AnD$ is generated by~$00$
and~$01$,~$\AnD$ is a ns suboperad of~$\SComp$.
\medskip

One has the following characterization of the elements of~$\SComp$:
\begin{Proposition} \label{prop:ElemSComp}
    The elements of~$\SComp$ are exactly the words on the alphabet~$\{0, 1, 2\}$
    beginning by~$0$.
\end{Proposition}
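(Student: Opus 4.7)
The plan is to mirror the strategy used for Proposition~\ref{prop:ElemComp} (and the other similar characterization statements), splitting the proof into two inclusions, each by induction on the length of the word.

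For the forward direction, I would argue that every element of $\SComp$ is a word on $\{0,1,2\}$ starting with $0$. The alphabet restriction is automatic since $\SComp \subseteq \T \EnsNat_3$. To show that each element starts with $0$, I induct on length. The unit $0$ has length~$1$. For $|x| \geq 2$, Lemma~\ref{lem:GenerationOpNSEns} gives $x = y \circ_i g$ for some $y \in \SComp$ of length $|x|-1$, some $i \in [|y|]$, and some $g \in \{00, 01, 02\}$. By the induction hypothesis $y_1 = 0$. If $i > 1$, then the first letter of $x$ is $y_1 = 0$; if $i = 1$, then the first letter of $x$ is $y_1 \bullet g_1 = 0 + 0 = 0$ in $\EnsNat_3$. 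Either way $x_1 = 0$.

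For the reverse direction, I would show that $\SComp$ contains every word $x$ on $\{0,1,2\}$ with $x_1 = 0$, again by induction on $n := |x|$. The base case $n = 1$ forces $x = 0$, which is the unit of $\SComp$. For $n \geq 2$, set
\begin{equation}
    y := (x_1, \dots, x_{n-1}) \qquad \mbox{and} \qquad h := x_n - x_{n-1} \mod 3,
\end{equation}
so that $h \in \{0,1,2\}$ and $0h \in \{00, 01, 02\}$. A direct computation using~\eqref{eq:TSub} in $\T \EnsNat_3$ gives
\begin{equation}
    y \circ_{n-1} 0h =
    (y_1, \dots, y_{n-2}, y_{n-1} \bullet 0, y_{n-1} \bullet h)
    = (x_1, \dots, x_{n-1}, x_n) = x.
\end{equation}
Since $y$ still starts with $0$ and has length $n-1$, the induction hypothesis gives $y \in \SComp$, and since $0h$ is a generator, $x = y \circ_{n-1} 0h$ belongs to $\SComp$.

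The argument is essentially routine, being lighter than that of Proposition~\ref{prop:ElemComp}: the presence of all three generators $00, 01, 02$ means that for any target difference $h \in \EnsNat_3$ between consecutive letters we already have a generator realizing it, so there is no obstacle analogous to the case distinctions of the $\Motz$ or $\Schr$ proofs. The only subtlety worth double-checking is that the addition in the grafting map really is taken modulo~$3$, which is guaranteed by the fact that $\SComp$ sits inside $\T \EnsNat_3$ rather than $\T \EnsNat$.
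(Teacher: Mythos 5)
Your proof is correct, and it diverges from the paper's in the second half. For the forward inclusion both arguments are the same (induction via Lemma~\ref{lem:GenerationOpNSEns}, noting that all generators begin with~$0$); you merely spell out the $i = 1$ versus $i > 1$ cases that the paper dismisses as immediate. For the reverse inclusion the paper splits into two cases: if $x$ consists only of~$0$'s it is built by grafting $00$ with itself, and otherwise it locates a factor $x_i x_{i+1} \in \{01, 02\}$ (which exists because the first letter is~$0$) and deletes $x_{i+1}$ to form~$y$ with $x = y \circ_i g$. You instead always peel off the last letter, writing $x = y \circ_{n-1} 0h$ with $h := x_n - x_{n-1} \bmod 3$, which is a valid graft since $y_{n-1} \bullet 0 = x_{n-1}$ and $y_{n-1} \bullet h = x_n$ in~$\EnsNat_3$. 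Your decomposition buys a uniform, case-free argument, and your closing remark correctly identifies why it works here but not for~$\Comp$, $\Motz$, or~$\Schr$: with all of $00$, $01$, $02$ available every difference modulo~$3$ between consecutive letters is realized by a generator. The paper's factor-deletion scheme is less economical here but is the template it reuses verbatim across the sibling propositions where that flexibility is absent.
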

\begin{proof}
    It is immediate, from the definition of~$\SComp$ and
    Lemma~\ref{lem:GenerationOpNSEns}, that any element of this ns operad
    starts by~$0$ since its generators~$00$, $01$, and~$02$ all start by~$0$.
    \medskip

    Let us now show by induction on the length of the words that~$\SComp$
    contains any word~$x$ satisfying the statement. This is
    true when~$|x| = 1$. When~$n := |x| \geq 2$, let us observe that if~$x$
    only consists in letters~$0$, $\SComp$ contains~$x$ because~$x$ can
    be obtained by composing the generator~$00$ with itself. Otherwise,
    $x$ has at least one occurrence of a~$1$ or a~$2$. Since its first
    letter is~$0$, there is in~$x$ a factor~$x_i x_{i + 1} =: g$ such
    that~$g \in \{01, 02\}$. By setting
    \begin{equation}
        y := (x_1, \dots, x_i, x_{i + 2}, \dots, x_n),
    \end{equation}
    we have~$x = y \circ_i g$. Since~$y$ satisfies the statement, by
    induction hypothesis~$\SComp$ contains~$y$. Hence,~$\SComp$ also
    contains~$x$.
\end{proof}
\medskip

A {\em segmented integer composition} is a sequence~$(S_1, \dots, S_\ell)$
of integers compositions. The size~$|S|$ of a segmented integer composition
is the sum of the sizes of the integer compositions which constitute~$S$.
It is well-known that there are~$3^{n - 1}$ segmented integer compositions
of size~$n$. We shall represent a segmented integer composition~$S$ by a
{\em ribbon diagram}, that is the diagram consisting in the sequence of
the ribbon diagrams of the integer compositions that constitute~$S$.
There is a bijection between the words of~$\SComp$ of arity~$n$ and ribbon
diagrams of segmented compositions of size~$n$.
\medskip

To compute~$\phi_\SComp(x)$ where~$x$ is an element of~$\SComp$,
factorize~$x$ as~$x = 0 x^{(1)} \dots 0 x^{(\ell)}$ such that for
any~$i \in [\ell]$, the factor $x^{(i)}$ has no occurrence of~$0$, and
compute the sequence
$\left(\phi_\Comp\left(0\bar x^{(1)}\right), \dots, \phi_\Comp\left(0\bar x^{(\ell)}\right)\right)$,
where for any~$i \in [\ell]$, $\bar x^{(i)}$ is the word obtained
from~$x^{(i)}$ by decreasing all letters.
\medskip

The inverse bijection is computed as follows.
Given a ribbon diagram~$S := (S_1, \dots, S_\ell)$ of a segmented integer
composition of size $n$, one computes an element of~$\SComp$ of arity~$n$
by computing the sequence $\left(u^{(1)}, \dots, u^{(\ell)}\right)$ where
for any~$i \in [|\ell|]$, $u^{(i)}$ is the word of~$\Comp$ obtained by
applying the inverse bijection of~$\phi_\Comp$ on~$u^{(i)}$, then by
incrementing in each~$u^{(i)}$ all letters, excepted the first one, and
finally by concatenating each words of the sequence.
\medskip

Since the elements
of~$\SComp$ satisfy  Proposition~\ref{prop:ElemSComp},~$\phi_\SComp$ is
well-defined. Figure~\ref{fig:BijSComp} shows an example of this bijection.
\begin{figure}[ht]
    \centering
    \begin{equation*}
        \begin{split}0102012210 \quad \xrightarrow{\phi_\SComp} \quad \end{split}
        \begin{split}\scalebox{.25}{\begin{tikzpicture}
            \node[Boite,EtiqClair]at(0,0){$0$};
            \node[Boite,EtiqClair]at(1,0){$0$};
            \node[Boite,EtiqClair]at(3,0){$0$};
            \node[Boite,EtiqClair]at(3,-1){$1$};
            \node[Boite,EtiqClair]at(5,0){$0$};
            \node[Boite,EtiqClair]at(6,0){$0$};
            \node[Boite,EtiqClair]at(6,-1){$1$};
            \node[Boite,EtiqClair]at(6,-2){$1$};
            \node[Boite,EtiqClair]at(7,-2){$0$};
            \node[Boite,EtiqClair]at(9,0){$0$};
        \end{tikzpicture}}\end{split}
        \quad \longleftrightarrow \quad
        \begin{split}\scalebox{.25}{\begin{tikzpicture}
            \node[Boite]at(0,0){};
            \node[Boite]at(1,0){};
            \node[Boite]at(3,0){};
            \node[Boite]at(3,-1){};
            \node[Boite]at(5,0){};
            \node[Boite]at(6,0){};
            \node[Boite]at(6,-1){};
            \node[Boite]at(6,-2){};
            \node[Boite]at(7,-2){};
            \node[Boite]at(9,0){};
        \end{tikzpicture}}\end{split}
    \end{equation*}
    \caption{Interpretation of an element of the operad~$\SComp$
    in terms of a segmented composition via the bijection~$\phi_\SComp$.
    Boxes of the ribbon diagram in the middle are labeled.}
    \label{fig:BijSComp}
\end{figure}
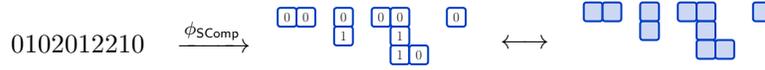
\medskip

\begin{Theoreme} \label{thm:PresentationSComp}
    The ns operad~$\SComp$ admits the presentation
    \begin{equation}
        \SComp =
        \CalF\left(\left\{\SCompOpA, \SCompOpB, \SCompOpC\right\}\right)/_\equiv,
    \end{equation}
    where~$\SCompOpA$, $\SCompOpB$, and $\SCompOpC$ are of arity~$2$,
    and~$\equiv$ is the ns operadic congruence generated by

    \begin{minipage}{.45\textwidth}
    \begin{equation}
        \SCompOpA \circ_1 \SCompOpA \: \leftrightarrow \:
            \SCompOpA \circ_2 \SCompOpA,
    \end{equation}
    \begin{equation}
        \SCompOpB \circ_1 \SCompOpA \: \leftrightarrow \:
            \SCompOpA \circ_2 \SCompOpB,
    \end{equation}
    \begin{equation}
        \SCompOpB \circ_1 \SCompOpB \: \leftrightarrow \:
            \SCompOpB \circ_2 \SCompOpA,
    \end{equation}
    \begin{equation}
        \SCompOpA \circ_1 \SCompOpB \: \leftrightarrow \:
            \SCompOpB \circ_2 \SCompOpC,
    \end{equation}
    \begin{equation}
        \SCompOpB \circ_1 \SCompOpC \: \leftrightarrow \:
            \SCompOpC \circ_2 \SCompOpC,
    \end{equation}
    \end{minipage}
    \begin{minipage}{.45\textwidth}
    \begin{equation}
        \SCompOpA \circ_1 \SCompOpC \: \leftrightarrow \:
            \SCompOpC \circ_2 \SCompOpB,
    \end{equation}
    \begin{equation}
        \SCompOpC \circ_1 \SCompOpA \: \leftrightarrow \:
            \SCompOpA \circ_2 \SCompOpC,
    \end{equation}
    \begin{equation}
        \SCompOpC \circ_1 \SCompOpB \: \leftrightarrow \:
            \SCompOpB \circ_2 \SCompOpB,
    \end{equation}
    \begin{equation}
        \SCompOpC \circ_1 \SCompOpC \: \leftrightarrow \:
            \SCompOpC \circ_2 \SCompOpA.
    \end{equation}
    \end{minipage}
\end{Theoreme}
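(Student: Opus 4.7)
The plan is to apply Lemma~\ref{lem:PresentationReecriture} exactly as in the proofs of Theorems~\ref{thm:PresentationSchr} and~\ref{thm:PresentationComp}. To verify condition~\ref{item:ReecritureConditionEquiv}, I would substitute $\SCompOpA$, $\SCompOpB$, and $\SCompOpC$ by $00, 01, 02 \in \SComp(2)$ respectively and compute both sides of each of the nine relations, bearing in mind that the product of $\EnsNat_3$ is addition modulo~$3$ (so that for instance $1 \bullet 2 = 0$ and $2 \bullet 2 = 1$). A routine computation shows that the nine equivalence classes are respectively those of the elements $000$, $001$, $011$, $010$, $021$, $020$, $002$, $012$, and $022$ of $\SComp(3)$, which are exactly all the elements of $\SComp(3)$. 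This gives $\Eval(x) = \Eval(y)$ for each relation $x \leftrightarrow y$.

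I would then orient every relation from left to right, obtaining a rewrite rule $\mapsto$ whose nine rules all share the uniform shape $\alpha \circ_1 \beta \mapsto \gamma \circ_2 \delta$ with $\alpha, \beta, \gamma, \delta \in \{\SCompOpA, \SCompOpB, \SCompOpC\}$. Since each rewriting replaces a $\circ_1$-graft by a $\circ_2$-graft inside the syntax tree, the weight argument of Theorem~\ref{thm:PresentationFCatk} applies directly: $\Poids(T_0) < \Poids(T_1)$ for every rewriting $T_0 \to T_1$, so $\mapsto$ is terminating. Its normal forms are exactly the syntax trees of $\CalF(\{\SCompOpA, \SCompOpB, \SCompOpC\})$ in which no internal node admits an internal node as left child, that is, the right combs whose $n - 1$ internal nodes are freely labeled by one of the three generators. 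There are therefore $3^{n - 1}$ normal forms of arity~$n$, which by Proposition~\ref{prop:ElemSComp} matches the number of elements of $\SComp(n)$; thus condition~\ref{item:ReecritureConditionFN} is satisfied and the presentation follows.

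The only real obstacle is the verification of the nine relations in the first step: the computations must be carried out carefully because of the modular arithmetic in $\EnsNat_3$, and one should also confirm that the nine elements of $\SComp(3)$ produced are all distinct, which ensures that no further relation is needed to generate $\equiv$. Everything else is routine: termination reuses the weight argument of Theorem~\ref{thm:PresentationFCatk}, and the normal-form count is immediate from the right-comb shape of the reduced trees.
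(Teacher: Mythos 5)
Your proposal is correct and follows essentially the same route as the paper: the same substitution $\SCompOpA, \SCompOpB, \SCompOpC \mapsto 00, 01, 02$ yielding the same nine degree-$2$ classes $000$, $001$, $011$, $010$, $021$, $020$, $002$, $012$, $022$, the same left-to-right orientation with termination by the weight argument, and the same identification of normal forms as right combs counted by $3^{n-1}$, matched against Proposition~\ref{prop:ElemSComp} via Lemma~\ref{lem:PresentationReecriture}.
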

\begin{proof}
    First, note that by replacing~$\SCompOpA$ by~$00 \in \SComp(2)$,
    $\SCompOpB$ by~$01 \in \SComp(2)$, and~$\SCompOpC$ by~$02 \in \SComp(2)$,
    we have~$\Eval(x) = \Eval(y)$ for the nine relations~$x \leftrightarrow y$
    of the statement of the Theorem. Indeed, the nine equivalence classes
    are, respectively, the ones of the elements~$000$, $001$, $011$, $010$,
    $021$, $020$, $002$, $012$, and~$022$.
    \smallskip

    Consider now the orientation of~$\leftrightarrow$ into the rewrite
    rule~$\mapsto$ defined by

    \begin{minipage}{.45\textwidth}
    \begin{equation}
        \begin{split}\scalebox{.35}{\begin{tikzpicture}
            \node[Feuille](0)at(0.0,-2){};
            \node[Noeud,EtiqClair,minimum size=12mm](1)at(1.0,-1){\scalebox{2}{\SCompOpA}};
            \node[Feuille](2)at(2.0,-2){};
            \draw[Arete](1)--(0);
            \draw[Arete](1)--(2);
            \node[Noeud,EtiqClair,minimum size=12mm](3)at(3.0,0){\scalebox{2}{\SCompOpA}};
            \node[Feuille](4)at(4.0,-1){};
            \draw[Arete](3)--(1);
            \draw[Arete](3)--(4);
        \end{tikzpicture}}\end{split}
        \begin{split}\enspace \to \enspace \end{split}
        \begin{split}\scalebox{.35}{\begin{tikzpicture}
            \node[Feuille](0)at(0.0,-1){};
            \node[Noeud,EtiqClair,minimum size=12mm](1)at(1.0,0){\scalebox{2}{\SCompOpA}};
            \node[Feuille](2)at(2.0,-2){};
            \node[Noeud,EtiqClair,minimum size=12mm](3)at(3.0,-1){\scalebox{2}{\SCompOpA}};
            \node[Feuille](4)at(4.0,-2){};
            \draw[Arete](3)--(2);
            \draw[Arete](3)--(4);
            \draw[Arete](1)--(0);
            \draw[Arete](1)--(3);
        \end{tikzpicture}}\end{split},
    \end{equation}
    \begin{equation}
        \begin{split}\scalebox{.35}{\begin{tikzpicture}
            \node[Feuille](0)at(0.0,-2){};
            \node[Noeud,EtiqClair,minimum size=12mm](1)at(1.0,-1){\scalebox{2}{\SCompOpA}};
            \node[Feuille](2)at(2.0,-2){};
            \draw[Arete](1)--(0);
            \draw[Arete](1)--(2);
            \node[Noeud,EtiqClair,minimum size=12mm](3)at(3.0,0){\scalebox{2}{\SCompOpB}};
            \node[Feuille](4)at(4.0,-1){};
            \draw[Arete](3)--(1);
            \draw[Arete](3)--(4);
        \end{tikzpicture}}\end{split}
        \begin{split}\enspace \to \enspace \end{split}
        \begin{split}\scalebox{.35}{\begin{tikzpicture}
            \node[Feuille](0)at(0.0,-1){};
            \node[Noeud,EtiqClair,minimum size=12mm](1)at(1.0,0){\scalebox{2}{\SCompOpA}};
            \node[Feuille](2)at(2.0,-2){};
            \node[Noeud,EtiqClair,minimum size=12mm](3)at(3.0,-1){\scalebox{2}{\SCompOpB}};
            \node[Feuille](4)at(4.0,-2){};
            \draw[Arete](3)--(2);
            \draw[Arete](3)--(4);
            \draw[Arete](1)--(0);
            \draw[Arete](1)--(3);
        \end{tikzpicture}}\end{split},
    \end{equation}
    \begin{equation}
        \begin{split}\scalebox{.35}{\begin{tikzpicture}
            \node[Feuille](0)at(0.0,-2){};
            \node[Noeud,EtiqClair,minimum size=12mm](1)at(1.0,-1){\scalebox{2}{\SCompOpB}};
            \node[Feuille](2)at(2.0,-2){};
            \draw[Arete](1)--(0);
            \draw[Arete](1)--(2);
            \node[Noeud,EtiqClair,minimum size=12mm](3)at(3.0,0){\scalebox{2}{\SCompOpB}};
            \node[Feuille](4)at(4.0,-1){};
            \draw[Arete](3)--(1);
            \draw[Arete](3)--(4);
        \end{tikzpicture}}\end{split}
        \begin{split}\enspace \to \enspace \end{split}
        \begin{split}\scalebox{.35}{\begin{tikzpicture}
            \node[Feuille](0)at(0.0,-1){};
            \node[Noeud,EtiqClair,minimum size=12mm](1)at(1.0,0){\scalebox{2}{\SCompOpB}};
            \node[Feuille](2)at(2.0,-2){};
            \node[Noeud,EtiqClair,minimum size=12mm](3)at(3.0,-1){\scalebox{2}{\SCompOpA}};
            \node[Feuille](4)at(4.0,-2){};
            \draw[Arete](3)--(2);
            \draw[Arete](3)--(4);
            \draw[Arete](1)--(0);
            \draw[Arete](1)--(3);
        \end{tikzpicture}}\end{split},
    \end{equation}
    \begin{equation}
        \begin{split}\scalebox{.35}{\begin{tikzpicture}
            \node[Feuille](0)at(0.0,-2){};
            \node[Noeud,EtiqClair,minimum size=12mm](1)at(1.0,-1){\scalebox{2}{\SCompOpB}};
            \node[Feuille](2)at(2.0,-2){};
            \draw[Arete](1)--(0);
            \draw[Arete](1)--(2);
            \node[Noeud,EtiqClair,minimum size=12mm](3)at(3.0,0){\scalebox{2}{\SCompOpA}};
            \node[Feuille](4)at(4.0,-1){};
            \draw[Arete](3)--(1);
            \draw[Arete](3)--(4);
        \end{tikzpicture}}\end{split}
        \begin{split}\enspace \to \enspace \end{split}
        \begin{split}\scalebox{.35}{\begin{tikzpicture}
            \node[Feuille](0)at(0.0,-1){};
            \node[Noeud,EtiqClair,minimum size=12mm](1)at(1.0,0){\scalebox{2}{\SCompOpB}};
            \node[Feuille](2)at(2.0,-2){};
            \node[Noeud,EtiqClair,minimum size=12mm](3)at(3.0,-1){\scalebox{2}{\SCompOpC}};
            \node[Feuille](4)at(4.0,-2){};
            \draw[Arete](3)--(2);
            \draw[Arete](3)--(4);
            \draw[Arete](1)--(0);
            \draw[Arete](1)--(3);
        \end{tikzpicture}}\end{split},
    \end{equation}
    \begin{equation}
        \begin{split}\scalebox{.35}{\begin{tikzpicture}
            \node[Feuille](0)at(0.0,-2){};
            \node[Noeud,EtiqClair,minimum size=12mm](1)at(1.0,-1){\scalebox{2}{\SCompOpC}};
            \node[Feuille](2)at(2.0,-2){};
            \draw[Arete](1)--(0);
            \draw[Arete](1)--(2);
            \node[Noeud,EtiqClair,minimum size=12mm](3)at(3.0,0){\scalebox{2}{\SCompOpB}};
            \node[Feuille](4)at(4.0,-1){};
            \draw[Arete](3)--(1);
            \draw[Arete](3)--(4);
        \end{tikzpicture}}\end{split}
        \begin{split}\enspace \to \enspace \end{split}
        \begin{split}\scalebox{.35}{\begin{tikzpicture}
            \node[Feuille](0)at(0.0,-1){};
            \node[Noeud,EtiqClair,minimum size=12mm](1)at(1.0,0){\scalebox{2}{\SCompOpC}};
            \node[Feuille](2)at(2.0,-2){};
            \node[Noeud,EtiqClair,minimum size=12mm](3)at(3.0,-1){\scalebox{2}{\SCompOpC}};
            \node[Feuille](4)at(4.0,-2){};
            \draw[Arete](3)--(2);
            \draw[Arete](3)--(4);
            \draw[Arete](1)--(0);
            \draw[Arete](1)--(3);
        \end{tikzpicture}}\end{split},
    \end{equation}
    \end{minipage}
    \begin{minipage}{.45\textwidth}
    \begin{equation}
        \begin{split}\scalebox{.35}{\begin{tikzpicture}
            \node[Feuille](0)at(0.0,-2){};
            \node[Noeud,EtiqClair,minimum size=12mm](1)at(1.0,-1){\scalebox{2}{\SCompOpC}};
            \node[Feuille](2)at(2.0,-2){};
            \draw[Arete](1)--(0);
            \draw[Arete](1)--(2);
            \node[Noeud,EtiqClair,minimum size=12mm](3)at(3.0,0){\scalebox{2}{\SCompOpA}};
            \node[Feuille](4)at(4.0,-1){};
            \draw[Arete](3)--(1);
            \draw[Arete](3)--(4);
        \end{tikzpicture}}\end{split}
        \begin{split}\enspace \to \enspace \end{split}
        \begin{split}\scalebox{.35}{\begin{tikzpicture}
            \node[Feuille](0)at(0.0,-1){};
            \node[Noeud,EtiqClair,minimum size=12mm](1)at(1.0,0){\scalebox{2}{\SCompOpC}};
            \node[Feuille](2)at(2.0,-2){};
            \node[Noeud,EtiqClair,minimum size=12mm](3)at(3.0,-1){\scalebox{2}{\SCompOpB}};
            \node[Feuille](4)at(4.0,-2){};
            \draw[Arete](3)--(2);
            \draw[Arete](3)--(4);
            \draw[Arete](1)--(0);
            \draw[Arete](1)--(3);
        \end{tikzpicture}}\end{split},
    \end{equation}
    \begin{equation}
        \begin{split}\scalebox{.35}{\begin{tikzpicture}
            \node[Feuille](0)at(0.0,-2){};
            \node[Noeud,EtiqClair,minimum size=12mm](1)at(1.0,-1){\scalebox{2}{\SCompOpA}};
            \node[Feuille](2)at(2.0,-2){};
            \draw[Arete](1)--(0);
            \draw[Arete](1)--(2);
            \node[Noeud,EtiqClair,minimum size=12mm](3)at(3.0,0){\scalebox{2}{\SCompOpC}};
            \node[Feuille](4)at(4.0,-1){};
            \draw[Arete](3)--(1);
            \draw[Arete](3)--(4);
        \end{tikzpicture}}\end{split}
        \begin{split}\enspace \to \enspace \end{split}
        \begin{split}\scalebox{.35}{\begin{tikzpicture}
            \node[Feuille](0)at(0.0,-1){};
            \node[Noeud,EtiqClair,minimum size=12mm](1)at(1.0,0){\scalebox{2}{\SCompOpA}};
            \node[Feuille](2)at(2.0,-2){};
            \node[Noeud,EtiqClair,minimum size=12mm](3)at(3.0,-1){\scalebox{2}{\SCompOpC}};
            \node[Feuille](4)at(4.0,-2){};
            \draw[Arete](3)--(2);
            \draw[Arete](3)--(4);
            \draw[Arete](1)--(0);
            \draw[Arete](1)--(3);
        \end{tikzpicture}}\end{split},
    \end{equation}
    \begin{equation}
        \begin{split}\scalebox{.35}{\begin{tikzpicture}
            \node[Feuille](0)at(0.0,-2){};
            \node[Noeud,EtiqClair,minimum size=12mm](1)at(1.0,-1){\scalebox{2}{\SCompOpB}};
            \node[Feuille](2)at(2.0,-2){};
            \draw[Arete](1)--(0);
            \draw[Arete](1)--(2);
            \node[Noeud,EtiqClair,minimum size=12mm](3)at(3.0,0){\scalebox{2}{\SCompOpC}};
            \node[Feuille](4)at(4.0,-1){};
            \draw[Arete](3)--(1);
            \draw[Arete](3)--(4);
        \end{tikzpicture}}\end{split}
        \begin{split}\enspace \to \enspace \end{split}
        \begin{split}\scalebox{.35}{\begin{tikzpicture}
            \node[Feuille](0)at(0.0,-1){};
            \node[Noeud,EtiqClair,minimum size=12mm](1)at(1.0,0){\scalebox{2}{\SCompOpB}};
            \node[Feuille](2)at(2.0,-2){};
            \node[Noeud,EtiqClair,minimum size=12mm](3)at(3.0,-1){\scalebox{2}{\SCompOpB}};
            \node[Feuille](4)at(4.0,-2){};
            \draw[Arete](3)--(2);
            \draw[Arete](3)--(4);
            \draw[Arete](1)--(0);
            \draw[Arete](1)--(3);
        \end{tikzpicture}}\end{split},
    \end{equation}
    \begin{equation}
        \begin{split}\scalebox{.35}{\begin{tikzpicture}
            \node[Feuille](0)at(0.0,-2){};
            \node[Noeud,EtiqClair,minimum size=12mm](1)at(1.0,-1){\scalebox{2}{\SCompOpC}};
            \node[Feuille](2)at(2.0,-2){};
            \draw[Arete](1)--(0);
            \draw[Arete](1)--(2);
            \node[Noeud,EtiqClair,minimum size=12mm](3)at(3.0,0){\scalebox{2}{\SCompOpC}};
            \node[Feuille](4)at(4.0,-1){};
            \draw[Arete](3)--(1);
            \draw[Arete](3)--(4);
        \end{tikzpicture}}\end{split}
        \begin{split}\enspace \to \enspace \end{split}
        \begin{split}\scalebox{.35}{\begin{tikzpicture}
            \node[Feuille](0)at(0.0,-1){};
            \node[Noeud,EtiqClair,minimum size=12mm](1)at(1.0,0){\scalebox{2}{\SCompOpC}};
            \node[Feuille](2)at(2.0,-2){};
            \node[Noeud,EtiqClair,minimum size=12mm](3)at(3.0,-1){\scalebox{2}{\SCompOpA}};
            \node[Feuille](4)at(4.0,-2){};
            \draw[Arete](3)--(2);
            \draw[Arete](3)--(4);
            \draw[Arete](1)--(0);
            \draw[Arete](1)--(3);
        \end{tikzpicture}}\end{split}.
    \end{equation}
    \end{minipage}

    This rewrite rule is terminating. Indeed, it is plain that for any
    rewriting~$T_0 \to T_1$, we have~$\Poids(T_0) < \Poids(T_1)$.
    \smallskip

    Moreover, the normal forms of~$\mapsto$ are all syntax trees
    of~$\CalF\left(\left\{\SCompOpA, \SCompOpB, \SCompOpC\right\}\right)$
    such that each internal node has no internal node as left son. Hence,
    the generating series~$F(t)$ of the normal forms of~$\mapsto$ is
    \begin{equation}
        F(t) = \sum_{n \geq 1} 3^{n - 1} t^n.
    \end{equation}
    By Proposition~\ref{prop:ElemSComp},~$F(t)$ also is the Hilbert series
    of~$\SComp$.
    \smallskip

    Hence, by Lemma~\ref{lem:PresentationReecriture},~$\SComp$ admits
    the claimed presentation.
\end{proof}
\medskip

\subsection{Operads from the multiplicative monoid}
\label{subsec:MonoideMultiplicatif}
We shall denote by~$\EnsNatM$ the multiplicative monoid of integers.
\medskip

Note that the ns suboperad of~$\T \EnsNatM$ generated by~$00$ and the ns
suboperad of~$\T \EnsNatM$ generated by~$11$ are both isomorphic to the
associative commutative operad~$\Com$.
\medskip

The operads constructed in this section fit into the diagram of ns operads
represented by Figure~\ref{fig:DiagrammeOperadesMonoideMultiplicatif}.
Table~\ref{tab:OperadesMult} summarizes some information about these ns
operads.
\begin{figure}[ht]
    \centering
    \begin{tikzpicture}[scale=.6]
        \node(TN)at(0,0){$\T \EnsNatM$};
        \node(Tr)at(0,-2){$\Tr$};
        \node(D)at(0,-4){$\DD$};
        \node(As)at(0,-6){$\Com$};
        \draw[Injection](Tr)--(TN);
        \draw[Injection](D)--(Tr);
        \draw[Surjection](D)--(As);
    \end{tikzpicture}
    \caption{The diagram of ns suboperads and quotients of~$\T \EnsNatM$.
    Arrows~$\rightarrowtail$ (resp.~$\twoheadrightarrow$) are injective
    (resp. surjective) ns operad morphisms.}
    \label{fig:DiagrammeOperadesMonoideMultiplicatif}
\end{figure}
\begin{table}[ht]
    \centering
    \begin{tabular}{c|c|c|c|c}
        Monoid & Ns operad & Generators & First dimensions
            & Combinatorial objects \\ \hline \hline
        \multirow{2}{*}{$\EnsNatM$} & $\Tr$ & $01$, $10$, $11$ & $1, 3, 7, 15, 31$
            & Binary words with at least one $1$ \\
        & $\DD$ & $01$, $10$ & $1, 2, 3, 4, 5$ & Binary words with exactly one $1$
    \end{tabular} \vspace{.5em}
    \caption{Ground monoids, generators, first dimensions, and combinatorial
    objects involved in the ns suboperads and quotients of~$\T \EnsNatM$.}
    \label{tab:OperadesMult}
\end{table}
\medskip

\subsubsection{The diassociative operad}
Let~$\DD$ be the ns suboperad~of $\T \EnsNatM$ generated
by~$01$ and~$10$. The following table shows the first elements of~$\DD$.
\begin{center}
    \begin{tabular}{c|p{11cm}}
        Arity & Elements of~$\DD$ \\ \hline \hline
        $1$ & $1$ \\ \hline
        $2$ & $01$, $10$ \\ \hline
        $3$ & $001$, $010$, $100$ \\ \hline
        $4$ & $0001$, $0010$, $0100$, $1000$ \\ \hline
        $5$ & $00001$, $00010$, $00100$, $01000$, $10000$ \\ \hline
        $6$ & $000001$, $000010$, $000100$, $001000$, $010000$, $100000$
    \end{tabular}
\end{center}
\medskip

Note that~$\Com$ is a quotient of~$\DD$ by the ns operadic congruence~$\equiv$
defined for all~$x, y \in \DD(n)$ by~$x \equiv y$.
\medskip

One has the following characterization of the elements of~$\DD$:
\begin{Proposition} \label{prop:ElemD}
    The elements of~$\DD$ are exactly the words on the alphabet~$\{0, 1\}$
    containing exactly one~$1$.
\end{Proposition}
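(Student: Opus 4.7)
The plan is to verify the two inclusions separately, in both cases exploiting that the product in the multiplicative monoid $\EnsNat$ satisfies $0 \bullet a = 0$ and $1 \bullet a = a$, so that for any $x \in \T\EnsNat(n)$, $y \in \T\EnsNat(m)$, and $i \in [n]$,
\begin{equation*}
    x \circ_i y = \begin{cases}
        (x_1, \dots, x_{i-1}, 0, \dots, 0, x_{i+1}, \dots, x_n) & \text{if } x_i = 0, \\
        (x_1, \dots, x_{i-1}, y_1, \dots, y_m, x_{i+1}, \dots, x_n) & \text{if } x_i = 1.
    \end{cases}
\end{equation*}
(Note that every element of $\DD$ has all its letters in $\{0,1\}$ since this set is closed under multiplication and contains the two generators together with $1$.)

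First I would show by induction on the arity that every element of $\DD$ is a word on $\{0,1\}$ with exactly one $1$. The base case is the unit $\Unite = (1) \in \T\EnsNat(1)$, which indeed has a single $1$. For the inductive step, Lemma~\ref{lem:GenerationOpNSEns} writes any non-unit element as $y \circ_i g$ with $g \in \{01, 10\}$; by induction $y$ has exactly one letter equal to $1$, and the case analysis above shows that either $x_i = 0$ (and the one $1$ of $y$ is kept unchanged in the result while two new $0$s are inserted) or $x_i = 1$ (and the unique $1$ of $y$ is located at position $i$, which gets replaced by the generator $g$ that also carries exactly one $1$, while the remaining letters of $y$ are $0$). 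In both subcases, $x \circ_i g$ has exactly one $1$.

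Next I would prove the converse by induction on the length of the words. The case $|x| = 1$ corresponds to $x = 1 = \Unite$, which lies in $\DD$. For $n := |x| \geq 2$, let $i \in [n]$ be the unique position of the letter $1$ in $x$ and distinguish two cases. If $i = 1$, set $y := (1, 0, \dots, 0)$ of length $n-1$; since $y_1 = 1$, we have $y \circ_1 10 = (1, 0, y_2, \dots, y_{n-1}) = x$. If $i \geq 2$, set $y := (0, \dots, 0, 1, 0, \dots, 0)$ of length $n-1$ with its $1$ in position $i-1$; then $y_{i-1} = 1$ gives $y \circ_{i-1} 01 = (0, \dots, 0, 0, 1, 0, \dots, 0) = x$. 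In either case $y$ is a word on $\{0,1\}$ of length $n-1$ with exactly one $1$, so by the induction hypothesis $y \in \DD$, and consequently $x = y \circ_{j} g \in \DD$.

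There is no real obstacle: the result follows purely from the absorbing/neutral character of $0$ and $1$ in the multiplicative monoid and a bookkeeping of the position of the unique $1$. The only minor point requiring attention is to choose the correct generator ($01$ versus $10$) and the correct grafting position when reconstructing $x$ from a shorter word, which is handled by the two cases above.
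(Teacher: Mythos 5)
Your proof is correct and follows essentially the same route as the paper: both inclusions are proved by induction on the length, using Lemma~\ref{lem:GenerationOpNSEns} together with the absorbing/neutral behaviour of $0$ and $1$ in the multiplicative monoid for the forward direction, and an explicit decomposition $x = y \circ_j g$ with $g \in \{01, 10\}$ for the converse (the paper organizes the converse by locating a factor $01$ or $10$ adjacent to the unique $1$, which amounts to the same two cases as your split on the position of that $1$). One cosmetic slip: when $y_i = 0$ the single letter at position $i$ is replaced by the two letters $0, 0$, so only one new $0$ is inserted rather than two — this does not affect the count of occurrences of $1$, which is all the argument needs.
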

\begin{proof}
    Let us first show by induction on the length of the words that any
    word~$x$ of~$\DD$ satisfies the statement. This is true
    when~$|x| = 1$ since~$1$ is the unit of~$\EnsNatM$. When~$|x| \geq 2$, by
    Lemma~\ref{lem:GenerationOpNSEns}, there is an element~$y$ of~$\DD$
    of length~$n := |x| - 1$, an integer~$i \in [n]$, and~$g \in \{01, 10\}$ such
    that~$x = y \circ_i g$. In all cases,~$x$ is obtained from~$y$ by
    inserting a~$0$ at an appropriate position. Since, by induction
    hypothesis,~$y$ satisfies the statement,~$x$ also satisfies the
    statement.
    \medskip

    Let us now show by induction on the length of the words that~$\DD$
    contains any word~$x$ satisfying the statement. This is
    true when~$|x| = 1$. When~$n := |x| \geq 2$, there is in~$x$ a
    factor~$x_i x_{i + 1} =: g$ such that~$g \in \{01, 10\}$. Assume
    without lost of generality that~$g = 01$. Then, by setting
    \begin{equation}
        y := (x_1, \dots, x_{i - 1}, x_{i + 1}, \dots, x_n),
    \end{equation}
    we have~$x = y \circ_i g$. Since~$y$ satisfies the statement,
    by induction hypothesis~$\DD$ contains~$y$. Hence,~$\DD$ also
    contains~$x$.
\end{proof}
\medskip

Recall that the {\em diassociative operad}~\cite{Lod01}~$\Dias$ is the ns
operad admitting the presentation
\begin{equation}
    \Dias := \CalF\left(\{\dashv, \vdash\}\right)/_\equiv,
\end{equation}
where~$\dashv$ and~$\vdash$ are of arity~$2$, and~$\equiv$ is the ns operadic
congruence generated by

\begin{minipage}{.55\textwidth}
\begin{equation}
    \dashv \circ_1 \dashv \enspace \leftrightarrow \enspace
    \dashv \circ_2 \dashv
    \enspace \leftrightarrow \enspace \dashv \circ_2 \vdash,
\end{equation}
\begin{equation}
    \vdash \circ_2 \vdash \enspace \leftrightarrow \enspace
    \vdash \circ_1 \vdash
    \enspace \leftrightarrow \enspace \vdash \circ_1 \dashv,
\end{equation}
\end{minipage}
\begin{minipage}{.4\textwidth}
\begin{equation}
    \dashv \circ_1 \vdash \enspace \leftrightarrow \enspace
    \vdash \circ_2 \dashv.
\end{equation}
\end{minipage}
\medskip

\begin{Proposition} \label{prop:IsoDiasD}
    The ns operads~$\DD$ and~$\Dias$ are isomorphic and the map
    \begin{equation}
        \phi : \Dias \to \DD
    \end{equation}
    satisfying~$\phi(\dashv) = 10$ and~$\phi(\vdash) = 01$ is an
    isomorphism.
\end{Proposition}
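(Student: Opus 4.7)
The plan is to check that the map $\phi$ extends to a well-defined operad morphism, then exploit a dimension count to obtain bijectivity.

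First, I would verify that the assignments $\phi(\dashv):=10$ and $\phi(\vdash):=01$ are compatible with the defining relations of $\Dias$. This amounts to a direct computation in $\T\EnsNat$ using the product of the multiplicative monoid: for instance,
\begin{equation}
    10\circ_1 10 \;=\; 10\circ_2 10 \;=\; 10\circ_2 01 \;=\; 100,
\end{equation}
\begin{equation}
    01\circ_2 01 \;=\; 01\circ_1 01 \;=\; 01\circ_1 10 \;=\; 001,
\end{equation}
\begin{equation}
    10\circ_1 01 \;=\; 01\circ_2 10 \;=\; 010.
\end{equation}
Once these equalities are checked, the universal property of the quotient $\CalF(\{\dashv,\vdash\})/_\equiv$ guarantees that $\phi$ extends to a (uniquely defined) operad morphism $\phi:\Dias\to\DD$.

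Next, I would establish surjectivity. By the very definition of $\DD$ as the suboperad of $\T\EnsNat$ generated by $01$ and $10$, every element of $\DD$ can be obtained from these two generators by iterated grafting; since $\phi$ hits both generators and is an operad morphism, $\phi$ is surjective.

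For injectivity, the cleanest route is a dimension count arity by arity. On one hand, a classical computation (see~\cite{Lod01}) gives $\#\Dias(n)=n$. On the other hand, by Proposition~\ref{prop:ElemD}, the arity $n$ component $\DD(n)$ consists exactly of the binary words of length $n$ containing exactly one letter~$1$, which also gives $\#\DD(n)=n$. Since $\phi$ is a surjective map between finite sets of the same cardinality in every arity, it is bijective, hence an operad isomorphism. The main point to take care of is simply the first step—verifying that all three families of defining relations of $\Dias$ are sent to genuine equalities in~$\DD$; everything else is then formal.
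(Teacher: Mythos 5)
Your proposal is correct and follows essentially the same route as the paper: verify that the defining relations of $\Dias$ are sent to genuine equalities among $01$ and $10$ in $\T\EnsNat$, so that $\phi$ is a well-defined surjective morphism, and then conclude by comparing the arity-wise dimensions ($\#\Dias(n)=n=\#\DD(n)$, the latter via Proposition~\ref{prop:ElemD}). Your explicit computations $10\circ_1 10=10\circ_2 10=10\circ_2 01=100$, etc., are all accurate and simply make explicit what the paper leaves as a one-line assertion.
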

\begin{proof}
    By replacing each generator~$g$ of~$\Dias$ by~$\phi(g)$,
    the generators~$01$ and~$10$ of~$\DD$ satisfy the same relations
    as the generators~$\dashv$ and~$\vdash$ of~$\Dias$.
    Moreover, by Proposition~\ref{prop:ElemD}, the Hilbert series
    of~$\DD$ is
    \begin{equation}
        F(t) = \sum_{n \geq 1} n t^n,
    \end{equation}
    which also is the Hilbert series of~$\Dias$. Then, there is no
    nontrivial relation of degree greater than two involving generators
    of~$\DD$.
\end{proof}
\medskip

Proposition~\ref{prop:IsoDiasD} also shows that~$\DD$ is a realization
of the diassociative operad.
\medskip

\subsubsection{The triassociative operad}
Let~$\Tr$ be the ns suboperad~of $\T \EnsNatM$ generated by~$01$, $10$,
and~$11$. The following table shows the first elements of~$\Tr$.
\begin{center}
    \begin{tabular}{c|p{11cm}}
        Arity & Elements of~$\Tr$ \\ \hline \hline
        $1$ & $1$ \\ \hline
        $2$ & $01$, $10$, $11$ \\ \hline
        $3$ & $001$, $010$, $011$, $100$, $101$, $110$, $111$ \\ \hline
        $4$ & $0001$, $0010$, $0011$, $0100$, $0101$, $0110$, $0111$,
              $1000$, $1001$, $1010$, $1011$, $1100$, $1101$, $1110$, $1111$
    \end{tabular}
\end{center}
\medskip

Since~$\DD$ is generated by~$01$ and~$10$,~$\DD$ is a ns suboperad of~$\Tr$.
\medskip

One has the following characterization of the elements of~$\Tr$:
\begin{Proposition} \label{prop:ElemTr}
    The elements of~$\Tr$ are exactly the words on the alphabet~$\{0, 1\}$
    containing at least one~$1$.
\end{Proposition}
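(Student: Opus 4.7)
The plan is to mirror the two--directional inductive argument used in the proof of Proposition~\ref{prop:ElemD}, adapted to the three generators $01$, $10$, $11$ instead of only $01$ and $10$. Throughout, I would exploit the explicit formula for grafts in $\T \EnsNat$, namely
\begin{equation}
    y \circ_i g = (y_1, \dots, y_{i-1}, y_i \cdot g_1, y_i \cdot g_2, y_{i+1}, \dots, y_n),
\end{equation}
where $\cdot$ is the multiplication in $\EnsNat$.

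First, I would show by induction on $|x|$ that every $x \in \Tr$ is a binary word containing at least one $1$. The base case $|x| = 1$ gives $x = 1$, the unit. For $|x| \geq 2$, Lemma~\ref{lem:GenerationOpNSEns} yields $x = y \circ_i g$ with $y \in \Tr$ and $g \in \{01, 10, 11\}$. By induction, every letter of $y$ lies in $\{0, 1\}$; since also $g_1, g_2 \in \{0, 1\}$ and $\{0, 1\}$ is closed under multiplication, every letter of $x$ lies in $\{0, 1\}$. To see that $x$ still has an occurrence of $1$, note that all letters of $y$ other than $y_i$ are preserved in $x$; if one of them equals $1$, we are done. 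Otherwise the unique $1$ in $y$ is $y_i$, and then $(x_i, x_{i+1}) = (g_1, g_2) = g$, which contains a $1$ because $g \in \{01, 10, 11\}$.

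For the converse, I would show by induction on $|x|$ that $\Tr$ contains every binary word $x$ having at least one occurrence of $1$. The base case $|x| = 1$ forces $x = 1$, the unit of $\Tr$. For $n := |x| \geq 2$, pick any position $k$ such that $x_k = 1$ and split into two cases. If $k \geq 2$, the factor $x_{k-1} x_k$ is either $01$ or $11$, hence lies in the generating set; setting
\begin{equation}
    y := (x_1, \dots, x_{k-2}, 1, x_{k+1}, \dots, x_n),
\end{equation}
one checks directly from the grafting formula (with $y_{k-1} = 1$) that $x = y \circ_{k-1} (x_{k-1} x_k)$, and $y$ still contains a $1$ at position $k-1$. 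If $k = 1$, then $x_1 = 1$ and the factor $x_1 x_2$ is either $10$ or $11$; setting $y := (1, x_3, \dots, x_n)$ we likewise get $x = y \circ_1 (x_1 x_2)$ with $y$ containing a $1$ at position $1$. In both cases the induction hypothesis gives $y \in \Tr$, hence $x \in \Tr$.

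No step presents a real obstacle; the only mild subtlety is to verify in the backward direction that the reduced word $y$ still contains a $1$, which forces the choice of factor to include the distinguished position $k$ rather than being taken arbitrarily. This is exactly what the two-case split (on whether $k \geq 2$ or $k = 1$) is designed to handle.
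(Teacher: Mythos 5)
Your proof is correct and follows essentially the same route as the paper: induction in both directions, with the forward step via Lemma~\ref{lem:GenerationOpNSEns} and the backward step by contracting a length-two factor from the generating set while keeping a letter equal to~$1$. The only cosmetic difference is that you anchor the factor at a chosen occurrence of~$1$ (splitting on $k\geq 2$ versus $k=1$), whereas the paper picks any factor in $\{01,10,11\}$ and then selects which of its two letters to retain according to its form; both devices guarantee the reduced word still contains a~$1$.
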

\begin{proof}
    Let us first show by induction on the length of the words that any
    word~$x$ of~$\Tr$ satisfies the statement. This is true
    when~$|x| = 1$ since~$1$ is the unit of~$\EnsNatM$. When~$|x| \geq 2$, by
    Lemma~\ref{lem:GenerationOpNSEns}, there is an element~$y$ of~$\Tr$
    of length~$n := |x| - 1$, an integer~$i \in [n]$, and~$g \in \{01, 10, 11\}$
    such that~$x = y \circ_i g$. By induction hypothesis,~$y$ contains
    at least one~$1$. Since all generators of~$\Tr$ contain at least one~$1$,~$x$
    also contains at least one~$1$.
    \smallskip

    Let us now show by induction on the length of the words that~$\Tr$
    contains any word~$x$ satisfying the statement. This is
    true when~$|x| = 1$. When $n := |x| \geq 2$, there is in~$x$
    a factor~$x_i x_{i + 1} =: g$ such that $g \in \{01, 10, 11\}$. Then,
    by setting
    \begin{equation}
        y := (x_1, \dots, x_{i - 1}, x_{i + 1}, \dots, x_n)
    \end{equation}
    if~$g = 01$, or
    \begin{equation}
        y := (x_1, \dots, x_i, x_{i + 2}, \dots, x_n)
    \end{equation}
    if~$g \in \{10, 11\}$, we have $x = y \circ_i g$. Since~$y$ satisfies
    the statement, by induction hypothesis~$\Tr$ contains~$y$. Hence,~$\Tr$
    also contains~$x$.
\end{proof}
\medskip

Recall that the {\em triassociative operad}~\cite{LR04}~$\Trias$ is the ns
operad admitting the presentation
\begin{equation}
    \Trias := \CalF(\{\dashv, \perp, \vdash\})/_\equiv,
\end{equation}
where~$\dashv$, $\perp$, and~$\vdash$ are of arity~$2$, and~$\equiv$ is
the ns operadic congruence generated by

\begin{minipage}{.4\textwidth}
\begin{equation}
    \dashv \circ_1 \vdash \enspace \leftrightarrow \enspace
    \vdash \circ_2 \dashv,
\end{equation}
\begin{equation}
    \perp \circ_1 \perp \enspace \leftrightarrow \enspace
    \perp \circ_2 \perp,
\end{equation}
\begin{equation}
    \dashv \circ_1 \perp \enspace \leftrightarrow \enspace
    \perp \circ_2 \dashv,
\end{equation}
\begin{equation}
    \perp \circ_1 \dashv \enspace \leftrightarrow \enspace
    \perp \circ_2 \vdash,
\end{equation}
\end{minipage}
\begin{minipage}{.55\textwidth}
\begin{equation}
    \perp \circ_1 \vdash \enspace \leftrightarrow \enspace
    \vdash \circ_2 \perp,
\end{equation}
\begin{equation}
    \dashv \circ_1 \dashv \enspace \leftrightarrow \enspace
        \dashv \circ_2 \dashv \enspace \leftrightarrow \enspace
        \dashv \circ_2 \vdash \enspace \leftrightarrow \enspace
        \dashv \circ_2 \perp,
\end{equation}
\begin{equation}
    \vdash \circ_2 \vdash \enspace \leftrightarrow \enspace
        \vdash \circ_1 \vdash \enspace \leftrightarrow \enspace
        \vdash \circ_1 \dashv \enspace \leftrightarrow \enspace
        \vdash \circ_1 \perp.
\end{equation}
\end{minipage}
\medskip

\begin{Proposition} \label{prop:IsoTriasTr}
    The ns operads~$\Tr$ and~$\Trias$ are isomorphic and the map
    \begin{equation}
        \phi : \Trias \to \Tr
    \end{equation}
    satisfying~$\phi(\dashv) = 10$, $\phi(\vdash) = 01$,
    and~$\phi(\perp) = 11$ is an isomorphism.
\end{Proposition}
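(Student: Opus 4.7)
The plan is to mirror the strategy used for Proposition~\ref{prop:IsoDiasD}. First I would verify that the three elements $10$, $01$, $11$ of $\Tr(2)$ satisfy in $\T \EnsNat$ the seven defining relations of $\Trias$ when $\dashv$, $\vdash$, $\perp$ are replaced by $10$, $01$, $11$ respectively. Each relation is a degree-two identity in the free operad and therefore translates into the comparison of two explicit length-three words of~$\Tr$, obtained from the grafting formula~\eqref{eq:TSub} with the multiplicative product of~$\EnsNat$. For instance $\phi(\dashv \circ_1 \vdash) = 10 \circ_1 01 = (1 \cdot 0, 1 \cdot 1, 0) = 010$ and $\phi(\vdash \circ_2 \dashv) = 01 \circ_2 10 = (0, 1 \cdot 1, 1 \cdot 0) = 010$, and similarly each of the seven relations produces an equality of binary words since the only products involved are $0 \cdot a = 0$ and $1 \cdot a = a$. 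This step is a routine verification of seven identities and presents no real difficulty.

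Having checked this, $\phi$ extends by freeness to a well-defined operad morphism $\phi : \Trias \to \Tr$. Because $\Tr$ is by definition generated by $01$, $10$, and $11$, the morphism $\phi$ is surjective. To conclude that $\phi$ is an isomorphism it is then enough, as in the proof of Proposition~\ref{prop:IsoDiasD}, to show that $\Tr$ and $\Trias$ have the same Hilbert series. By Proposition~\ref{prop:ElemTr}, the arity~$n$ part of~$\Tr$ is the set of binary words of length~$n$ containing at least one letter~$1$, so $\# \Tr(n) = 2^n - 1$ and
\begin{equation*}
    F_\Tr(t) = \sum_{n \geq 1} (2^n - 1)\, t^n.
\end{equation*}
On the other hand, it is a classical result of Loday and Ronco~\cite{LR04} that the triassociative operad satisfies $\dim \Trias(n) = 2^n - 1$, so $F_\Trias(t) = F_\Tr(t)$.

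The main (and only) obstacle is the Hilbert-series comparison: if one wishes to keep the argument self-contained rather than invoking~\cite{LR04}, I would exhibit an orientation of the seven relations into a terminating rewrite rule on $\CalF(\{\dashv, \perp, \vdash\})$ and count its normal forms, precisely as done in the proofs of Theorems~\ref{thm:PresentationSchr} and~\ref{thm:PresentationSComp}. A natural choice is to orient each relation from left to right in such a way that the weight~$\Poids$ strictly increases at every rewriting step, so that termination is immediate; the normal forms should then be syntax trees whose internal nodes satisfy simple local forbidden patterns, and the resulting regular specification should yield the generating series $\sum_{n \geq 1} (2^n - 1) t^n$. Once this is in hand, Lemma~\ref{lem:PresentationReecriture} together with the surjectivity of $\phi$ forces $\phi$ to be an isomorphism, since a surjective morphism between combinatorial operads with equal Hilbert series is necessarily bijective arity by arity.
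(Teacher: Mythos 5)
Your proposal is correct and follows essentially the same route as the paper: verify that the images of $\dashv$, $\vdash$, $\perp$ satisfy the seven defining relations of $\Trias$ in $\Tr$, use Proposition~\ref{prop:ElemTr} to get $\# \Tr(n) = 2^n - 1$, and conclude from the matching Hilbert series that the surjection is an isomorphism. Your extra remarks (the explicit sample computation, the citation of~\cite{LR04} for $\dim \Trias(n)$, and the optional self-contained rewriting argument) only make explicit what the paper's terser proof leaves implicit.
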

\begin{proof}
    By replacing each generator~$g$ of~$\Trias$ by~$\phi(g)$,
    the generators~$01$,~$10$, and~$11$ of~$\Tr$ satisfy the same relations
    as the generators~$\dashv$,~$\vdash$, and~$\perp$ of~$\Trias$.
    Moreover, by Proposition~\ref{prop:ElemTr}, the Hilbert series
    of~$\Tr$ is
    \begin{equation}
        F(t) = \sum_{n \geq 1} (2^n - 1) t^n,
    \end{equation}
    which also is the Hilbert series of~$\Trias$. Then, there is no
    nontrivial relation of degree greater than two involving generators
    of~$\Tr$.
\end{proof}
\medskip

Proposition~\ref{prop:IsoTriasTr} also shows that~$\Tr$ is a realization of
the triassociative operad.
\medskip

\section*{Concluding remarks}
We have presented the functorial construction~$\T$ producing an operad
given a monoid. As we have seen, this construction is very rich from
a combinatorial point of view since most of the obtained operads coming
from usual monoids involve a wide range of combinatorial objects. There
are various way to continue this work. Let us address here the main directions.
\smallskip

In the first place, it appears that we have somewhat neglected the fact that
$\T$ is a functor to operads and not only to ns ones. Indeed, except for the
operads~$\End$, $\FP$, $\MT$, and~$\Per$, we only have regarded the obtained
operads as ns ones. Computer experiments let us think that the dimensions of the
operads~$\APE$, $\FCat{2}$, $\Motz$, $\AnD$ and $\SComp$ seen as symmetric ones
are, respectively, Sequences~\Sloane{A052882}, \Sloane{A050351}, \Sloane{A032181},
\Sloane{A101052}, and~\Sloane{A001047} of~\cite{Slo}. Bijections between elements
of these operads and combinatorial objects enumerated by these sequences, together
with presentations by generators and relations in this symmetric context, would
be worthwhile.
\smallskip

Furthermore, we have considered~$\T$ only in the category of sets, {\em i.e.},
it takes a monoid as input and constructs a set-operad as output. We can
obviously extend the definition of~$\T$ over the category of vector spaces.
In that event, $\T$ would be a functor from the category of unital associative
algebras to the category of operads in the category of vector spaces. It
is thus natural to ask what operads $\T$ produces in this category.
\smallskip

Another line of research is the following. It is well-known that the
Koszul dual (see~\cite{GK94} for Koszul duality of operads) of the
operads~$\Dias$ and~$\Trias$ are respectively the dendriform~$\Dendr$~\cite{Lod01}
and the tridendriform~$\TDendr$~\cite{LR04} operads. The tridendriform
operad is a generalization of the dendriform operad and further generalizations
were proposed, like the operads~$\Quad$~\cite{AL04} and~$\Ennea$~\cite{Ler04}.
Since the operads~$\DD$ and~$\Tr$, obtained from the~$\T$ construction,
are respectively isomorphic to the operads~$\Dias$ and~$\Trias$, we can
ask if there are generalizations of~$\DD$ and~$\Tr$ so that their Koszul
duals provide generalizations of the operads~$\Dendr$ and~$\TDendr$.
\medskip

\bibliographystyle{alpha}
\bibliography{Bibliographie}

\newcommand{\etalchar}[1]{$^{#1}$}
\begin{thebibliography}{GBV88}

\bibitem[AL04]{AL04}
M.~Aguiar and J.-L. Loday.
\newblock {Quadri-algebras}.
\newblock {\em J. Pure Appl. Algebra}, 191(3):205--221, 2004.

\bibitem[AU94]{AU94}
A.~Aho and J.~Ullman.
\newblock {\em {Foundations of Computer Science}}.
\newblock W. H. Freeman, 1994.

\bibitem[BLL94]{BLL94}
F.~Bergeron, G.~Labelle, and P.~Leroux.
\newblock {\em {Combinatorial Species and Tree-like Structures}}.
\newblock Cambridge University Press, 1994.

\bibitem[BM03]{BM03}
C.~Berger and I.~Moerdijk.
\newblock {Axiomatic homotopy theory for operads}.
\newblock {\em Comment. Math. Helv.}, 78(4):805--831, 2003.

\bibitem[BV73]{BV73}
J.~M. Boardman and R.~M. Vogt.
\newblock {\em {Homotopy Invariant Algebraic Structures on Topological
  Spaces}}, volume 347.
\newblock Springer-Verlag, Berlin-New York, 1973.

\bibitem[Cha06]{Cha06}
F.~Chapoton.
\newblock {Sur le nombre d'intervalles dans les treillis de Tamari}.
\newblock {\em S\'em. Lothar. Combin.}, 55, 2006.

\bibitem[Cha08]{Cha08}
F.~Chapoton.
\newblock {Operads and algebraic combinatorics of trees}.
\newblock {\em S\'em. Lothar. Combin.}, 58, 2008.

\bibitem[CL01]{CL01}
F.~Chapoton and M.~Livernet.
\newblock {Pre-Lie algebras and the rooted trees operad}.
\newblock {\em Int. Math. Res. Notices}, 8:395--408, 2001.

\bibitem[CL07]{CL07}
F.~Chapoton and M.~Livernet.
\newblock {Relating two Hopf algebras built from an operad}.
\newblock {\em Int. Math. Res. Notices}, 2007.

\bibitem[Fra08]{Fra08}
A.~Frabetti.
\newblock {Groups of tree-expanded series}.
\newblock {\em J. Algebra}, 319(1):377--413, 2008.

\bibitem[FS09]{FS09}
P.~Flajolet and R.~Sedgewick.
\newblock {\em {Analytic Combinatorics}}.
\newblock Cambridge University Press, 2009.

\bibitem[GBV88]{GBV88}
D.~Gouyou-Beauchamps and X.~Viennot.
\newblock {Equivalence of the two-dimensional directed animal problem to a
  one-dimensional path problem}.
\newblock {\em Adv. Appl. Math.}, 9:334--357, 1988.

\bibitem[Gir12a]{Gir12}
S.~Giraudo.
\newblock {Constructing combinatorial operads from monoids}.
\newblock {\em Formal Power Series and Algebraic Combinatorics}, pages
  229--240, 2012.

\bibitem[Gir12b]{Gir12b}
S.~Giraudo.
\newblock {Construction d'op\'erades combinatoires \`a partir de mono\"ides}.
\newblock {\em Comptes-Rendus de l'Acad\'emie des Sciences},
  350(11-12):549--552, 2012.

\bibitem[GK94]{GK94}
V.~Ginzburg and M.~Kapranov.
\newblock {Koszul duality for operads}.
\newblock {\em Duke Math. J.}, 76(1):203--272, 1994.

\bibitem[HNT05]{HNT05}
F.~Hivert, J.-C. Novelli, and J.-Y. Thibon.
\newblock {The Algebra of Binary Search Trees}.
\newblock {\em Theor. Comput. Sci.}, 339(1):129--165, 2005.

\bibitem[Joy81]{Joy81}
A.~Joyal.
\newblock {Une th\'eorie combinatoire des s\'eries formelles}.
\newblock {\em Adv. Math.}, 42(1):1--82, 1981.

\bibitem[Ler04]{Ler04}
P.~Leroux.
\newblock {Ennea-algebras}.
\newblock {\em J. Algebra}, 281(1):287--302, 2004.

\bibitem[Liv06]{Liv06}
M.~Livernet.
\newblock {A rigidity theorem for pre-Lie algebras}.
\newblock {\em J. Pure Appl. Algebra}, 207(1):1--18, 2006.

\bibitem[Lod01]{Lod01}
J.-L. Loday.
\newblock {Dialgebras}.
\newblock {\em Lect. Notes Math.}, 1763:7--66, 2001.

\bibitem[LR98]{LR98}
J.-L. Loday and M.~Ronco.
\newblock {Hopf Algebra of the Planar Binary Trees}.
\newblock {\em Adv. Math.}, 139:293--309, 1998.

\bibitem[LR04]{LR04}
J.-L. Loday and M.~Ronco.
\newblock Trialgebras and families of polytopes.
\newblock {\em Contemp. Math.}, 346:369--398, 2004.

\bibitem[LV12]{LV12}
J.-L. Loday and B.~Vallette.
\newblock {\em {Algebraic Operads}}.
\newblock Springer, 2012.

\bibitem[May72]{May72}
J.~P. May.
\newblock {The Geometry of Iterated Loop Spaces}.
\newblock {\em Lect. Notes Math.}, 271, 1972.

\bibitem[ML13]{ML13}
M.~M\'endez and J.~Liendo.
\newblock {An antipode formula for the natural Hopf algebra of a set operad}.
\newblock {\em {\tt arXiv:1302.0564v1 [math.QA]}}, 2013.

\bibitem[MN93]{MN93}
M.~M\'endez and O.~Nava.
\newblock {Colored species, $c$-monoids, and plethysm I}.
\newblock {\em J. Comb. Theory Ser. A}, 64(1):102--129, 1993.

\bibitem[MY91]{MY91}
M.~M\'endez and J.~Yang.
\newblock {M\"obius Species}.
\newblock {\em Adv. Math.}, 85(1):83--128, 1991.

\bibitem[S{\etalchar{+}}13]{Sage}
W.~A. Stein et~al.
\newblock {\em {Sage Mathematics Software (Version 5.8)}}.
\newblock The Sage Development Team, 2013.
\newblock \url{http://www.sagemath.org}.

\bibitem[SCc08]{SageC}
The {S}age-{C}ombinat community.
\newblock {\em {Sage-Combinat: enhancing Sage as a toolbox for computer
  exploration in algebraic combinatorics}}, 2008.
\newblock \url{http://combinat.sagemath.org}.

\bibitem[Slo]{Slo}
N.~J.~A. Sloane.
\newblock {The On-Line Encyclopedia of Integer Sequences}.
\newblock \url{http://www.research.att.com/~njas/sequences/}.

\bibitem[vdL04]{Vdl04}
P.~van~der Laan.
\newblock {\em {Operads. Hopf algebras and coloured Koszul duality}}.
\newblock PhD thesis, Universiteit Utrecht, 2004.

\end{thebibliography}

\end{document}